\title[Twists of graded algebras in monoidal categories]{Twists of graded algebras in monoidal categories}
\author{Fernando Liu Lopez}
\address{Department of Mathematics, Rice University,
P.O. Box 1892, Houston, TX 77005-1892, USA}
\email{fcl2@rice.edu}
\author{Chelsea Walton}
\address{Department of Mathematics, Rice University,
P.O. Box 1892, Houston, TX 77005-1892, USA}
\email{notlaw@rice.edu}
\newcommand{\stkout}[1]{\ifmmode\text{\sout{\ensuremath{#1}}}\else\sout{#1}\fi}
\definecolor{forest}{rgb}{0.0, 0.5, 0.0}
\DeclareMathAlphabet{\cal}{OMS}{zplm}{m}{n}
\DeclareMathAlphabet{\mathsf}{OT1}{cmss}{m}{n} % to math mathsf thinner
\newcommand{\Id}{\textnormal{Id}}
\newcommand{\id}{\textnormal{id}}
\newcommand{\one}{\mathbbm{1}}
\newcommand{\Hom}{\textnormal{Hom}}
\newcommand{\act}{\triangleright}
\newcommand{\Alg}{\mathsf{Alg}}
\newcommand{\cA}{\cal{A}}
\newcommand{\cC}{\cal{C}}
\newcommand{\cD}{\cal{D}}
\newcommand{\cE}{\cal{E}}
\newcommand{\cV}{\cal{V}}
\newcommand{\cW}{\cal{W}}
\newcommand{\textand}{\phantom{mm}\textnormal{and}\phantom{mm}}
\numberwithin{equation}{section}
\newtheorem{theorem}{Theorem}[section]
\newtheorem{claim}[theorem]{Claim}
\newtheorem{proposition}[theorem]{Proposition}
\newtheorem{definitionproposition}[theorem]{Definition-Proposition}
\newtheorem{corollary}[theorem]{Corollary}
\newtheorem{lemma}[theorem]{Lemma}
\newtheorem{theorem*}{Theorem}
\theoremstyle{definition}
\newtheorem{definition}[theorem]{Definition}
\newtheorem{notation}[theorem]{Notation}
\newtheorem{example}[theorem]{Example}
\newtheorem{remark}[theorem]{Remark}
\newtheorem{hypothesis}[theorem]{Hypothesis}
\newtheorem{convention}[theorem]{Convention}
\let\c@equation\c@theorem  % incorporate equation numbering
\numberwithin{equation}{section}
\newcommand{\skp}{\vspace{0.07in}}
\subjclass[2020]{18D20, 16W50, 18M05}
\keywords{closed monoidal categories, graded algebras, graded modules, Zhang twist}
\begin{document}

\maketitle

%%%%%%%%%%%%%%%%%%%%%%%
% TEMPORAL TOOLS FOR WRITING
% Comment in the final version
%%%%%%%%%%%%%%%%%%%%%%%
%\begin{center}
%\fll{[FLL's edits {\tt $\backslash$fll}]}, \quad
%\cw{[CW's edits {\tt $\backslash$cw}]},\\
%\blue{items to be filled in {\tt $\backslash$blue} %(when it needs to be highlighted later)},  \quad
%\red{math concerns in {\tt $\backslash$red}},\\
%\quad [comments in brackets], \quad
 %strike-out \stkout{text} with {\tt %$\backslash$stkout}.
%\end{center}

\begin{abstract}
Zhang twists are a common tool for deforming graded algebras over a field in a way that preserves important ring-theoretic properties. We generalize Zhang twists to the setting of closed monoidal categories equipped with their self-enriched structure. Along the way, we prove several key results about algebraic structures in closed monoidal categories missing from the literature. We use these to ultimately prove Morita-type results, showcasing when graded algebras with equivalent categories of graded modules can be related by Zhang twists.
\end{abstract}

%\setcounter{tocdepth}{3}

%\begin{changemargin}{0.5cm}{0.5cm} 
%\tableofcontents
%\end{changemargin}

%%%%%%%%%%%%%%%%%%%%%%%%%%
%%%%%%%%%%%%%%%%%%%%%%%%%%
%%%%%%%%%%%%%%%%%%%%%%%%%%

\section{Introduction}\label{sec:intro}

Twists of graded algebras over a field were introduced by Zhang \cite{Zhang1996} to study certain categories of graded modules over graded algebras in Noncommutative Projective Algebraic Geometry (NCPAG), and since then the framework of twisted algebras has proven useful in its own right. It was shown  that many important properties of graded algebras are preserved under the procedure that is now known as {\it Zhang twist}, including several growth measures (e.g., Gelfand-Kirillov dimension), ring-theoretic notions (e.g., Noetherian, domain), and homological conditions and measures (e.g., global dimension, Gorenstein conditions) \cite[Theorem~1.3]{Zhang1996}. On the other hand, properties of algebras can also be preserved via equivalent categories of modules (e.g., via Morita equivalence). The main result of \cite{Zhang1996} is that twisting algebras is related to having a type of Morita equivalence; this is useful as the former is more fruitful computationally than the latter. Zhang's results have since been revamped in the context of  Morita theory \cite{Sierra2011}, have been used in the classification of Artin-Schelter regular algebras (the central algebras of NCPAG) and to understand noncommutative projective geometry itself \cite{Zhang1998, Cassidy1999, Stephenson2000, Rogalski2004, NVZ2011, VdB2011, RogalskiZhang2012, ZhouLu2014, Pym2015, LecoutreSierra2019, MoriUeyama2021}, to understand universal quantum groups \cite{BNY2018, HNUVVW2022pp,HNUVVW2022}, and more \cite{SierraWalton2016, FRS2020}.
The goal of our work is to generalize Zhang's twisting method for graded algebras, and to study the corresponding categories of graded modules, in the monoidal setting. 
 
\skp
  
 Recall that a {\it monoidal category} is a category $\cC$, equipped with a bifunctor $\otimes: \cC \times \cC \to \cC$ and an object $\one \in \cC$ such that the triple $(\cC, \otimes, \one)$ mimics the structure of a monoid. Examples include the monoidal category of $\Bbbk$-vector spaces $(\mathsf{Vec}_\Bbbk, \otimes_\Bbbk, \Bbbk)$, for a field $\Bbbk$, and the monoidal category of endofunctors $(\mathsf{End}(\cA), \circ, \Id_\cA)$, for a category $\cA$.
 Moreover, an {\it algebra} in a monoidal category $\cC$ is an object $A$ in $\cC$, equipped with morphisms $m: A \otimes A \to A$ and $u: \one \to A$ in $\cC$, such that the triple $(A,m,u)$ also mimics the structure of a monoid. Examples include $\Bbbk$-algebras as algebras in $\mathsf{Vec}_\Bbbk$, and {\it monads} over $\cA$ as algebras in $\mathsf{End}(\cA)$. These can be turned into $G$-graded structures, for $G$ a group, by swapping  $(\cC, \otimes, \one)$ for a certain monoidal category of $G$-tuples of objects in~$\cC$. Further, one can form graded modules over graded algebras in monoidal categories, where morphisms are $G$-tuples of morphisms in $\cC$; that is, the grading of morphisms here are preserved. We denote such categories of graded algebras in $\cC$ and graded right $A$-modules in $\cC$, respectively, by $\mathsf{GrAlg}(\cC)$ and $\mathsf{GrMod}(\cC)_A$. Note that mild conditions on $\cC$ given Hypothesis~\ref{hypotheses} are assumed throughout.
 
\skp
   
One way that we compare graded algebras in monoidal categories is via twists \`{a} la Zhang \cite{Zhang1996}. A {\it twisting system} $\tau$ on a graded algebra $A = (A_g)_{g \in G}$ in $\cC$  is a collection of isomorphisms in~$\cC$, namely $(\tau_d(g): A_g \overset{\sim}{\longrightarrow} A_g)_{d,g \in G}$, satisfying certain compatibility  conditions [Definition~\ref{def:twist}]. Then with a twisting system, one can {\it twist} $A$ to form another graded algebra $A^\tau$ in $\cC$ with altered multiplication and unit morphisms [Definition-Proposition~\ref{defprop:Atau}]. Now  consider the following terminology. 
 
 \begin{definition}[Definition~\ref{def:twistequiv}] \label{def:twistequiv-intro}
 We say that graded algebras $A$ and $B$ in $\cC$ are {\it twist equivalent} if there exists a twisting system $\tau$ on $A$ such that $A^\tau \cong B$ as graded algebras in $\cC$.   
\end{definition}

Next, we study categories of graded modules in monoidal categories. To do so, we consider enrichments. A category $\cC$ is said to be {\it enriched} over a monoidal category $(\cV, \otimes^{\cV}, \one^{\cV})$ if its collections of morphisms between two objects is an object of $\cV$. Compositions of morphisms are then given via $\otimes^\cV$ (see Section~\ref{sec:enrich}). For instance, when $\cC$ is the monoidal category $\mathsf{Vec}_\Bbbk$,  both $\mathsf{GrAlg}(\mathsf{Vec}_\Bbbk)$ and $\mathsf{GrMod}(\mathsf{Vec}_\Bbbk)_A$ are $\mathsf{Vec}_\Bbbk$-enriched (which is the same as being $\Bbbk$-linear).  Our next contribution is a result about enriched categories of graded modules.

\begin{proposition}[Corollary~\ref{cor:GrModCA-main}] \label{prop:grmodA-intro}
Let $(\cC,\otimes,\one)$ be a right-closed monoidal category with equalizers, and take a graded algebra $A$ in $\cC$. Then, the category  $\mathsf{GrMod}(\cC)_A$ is $\cC$-enriched.  \qed\end{proposition}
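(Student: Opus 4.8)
The plan is to exhibit, for each pair of graded right $A$-modules $M$ and $N$, an explicit hom-object in $\cC$ serving as $\underline{\Hom}_A(M,N)$, and then to verify the enriched-category axioms. Since $\cC$ is right-closed, for any objects $X,Y\in\cC$ we have an internal hom $\underline{\Hom}_\cC(X,Y)$ together with evaluation morphisms and the adjunction isomorphism $\cC(Z\otimes X, Y)\cong \cC(Z, \underline{\Hom}_\cC(X,Y))$, which makes $\cC$ itself $\cC$-enriched. For graded modules $M=(M_g)_{g\in G}$ and $N=(N_g)_{g\in G}$, the natural candidate for the hom-object is the $G$-tuple (or, since morphisms of graded modules are required to preserve degree, the single object) built as an equalizer: first form $\prod_{g\in G}\underline{\Hom}_\cC(M_g,N_g)$ — here I would either assume $G$ is such that the relevant products exist, or, more in keeping with the paper's conventions, realize the degree-wise hom as the appropriate object in the monoidal category of $G$-tuples — and then cut out the subobject of those families that are compatible with the $A$-actions. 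Concretely, $\underline{\Hom}_A(M,N)$ is the equalizer of the two morphisms $\prod_g \underline{\Hom}_\cC(M_g,N_g) \rightrightarrows \underline{\Hom}_\cC\big((M\otimes A), N\big)$ given by ``precompose with the action on $M$'' and ``postcompose with the action on $N$''; this equalizer exists because $\cC$ has equalizers by hypothesis.

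The key steps, in order, are: (1) recall the right-closed structure and fix notation for the internal hom $\underline{\Hom}_\cC$, its evaluation $\mathrm{ev}$, and the currying/uncurrying bijections, citing the earlier part of the paper where $\cC$ being $\cC$-enriched over itself is set up; (2) define the candidate hom-object $\underline{\Hom}_A(M,N)$ via the equalizer described above, and check that it genuinely corepresents $A$-linear degree-preserving morphisms, i.e. establish a natural isomorphism $\mathsf{GrMod}(\cC)_A(M,N)\cong \cC(\one, \underline{\Hom}_A(M,N))$, and more generally that tensoring in a test object $Z\in\cC$ corresponds to $A$-linear families parametrized by $Z$; (3) construct the enriched composition morphism $\underline{\Hom}_A(N,P)\otimes\underline{\Hom}_A(M,N)\to\underline{\Hom}_A(M,P)$ by restricting the composition on the underlying internal homs and using the universal property of the equalizer to see that the composite lands in the sub-object; (4) construct the identity morphism $\one\to\underline{\Hom}_A(M,M)$ from the identity of $M$; and (5) verify associativity and unitality of this enriched composition, which reduces to the corresponding identities for $\underline{\Hom}_\cC$ together with faithfulness of the equalizer inclusions.

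The main obstacle I expect is step (3) combined with the bookkeeping in step (2): showing that the underlying-$\cC$ composition morphism actually factors through the equalizer defining $\underline{\Hom}_A(M,P)$. This is the enriched analogue of the elementary fact that the composite of two $A$-linear maps is $A$-linear, but proving it internally requires chasing the two legs of the equalizer through the closed-structure adjunctions and the action morphisms, rather than evaluating on elements. The cleanest route is probably to avoid explicit diagram-chasing by arguing representably: use the natural isomorphism from step (2) to identify $\cC(Z,\underline{\Hom}_A(M,N))$ with the set of $Z$-families of $A$-linear maps $M\to N$, transport composition to that side (where it is manifestly well-defined and associative), and then invoke the Yoneda lemma to descend the composition, identity, associativity, and unit axioms back down to morphisms in $\cC$. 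With that representable reformulation in hand, the remaining verifications are routine. Since $\mathsf{GrMod}(\cC)_A$ is a full subcategory of the analogous category without the $A$-action (which is handled by the cited result on $\cC$-enrichment of $\cC$ itself, extended to $G$-tuples), one could alternatively phrase the whole argument as: the equalizer-subobject construction produces a $\cC$-enriched subcategory, and $\mathsf{GrMod}(\cC)_A$ is exactly that subcategory — this is likely how Corollary~\ref{cor:GrModCA-main} is deduced from its governing proposition in the main text.
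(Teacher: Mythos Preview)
Your proposal is essentially correct but takes a different, more hands-on route than the paper. You propose to work directly with graded modules: form the product $\prod_g \underline{\Hom}_\cC(M_g,N_g)$, carve out the $A$-linear subobject as an equalizer, and verify the enriched-category axioms either by diagram chasing or representably. The paper instead factors the argument into three reusable pieces: (i) prove the \emph{ungraded} statement that $\mathsf{Mod}(\cC)_A$ is $\cC$-enriched whenever $\cC$ is right-closed with equalizers (Theorem~\ref{thm:closed}); (ii) show that $\mathsf{Gr}(\cC)$ is again right-closed with equalizers (Theorem~\ref{thm:closed2}), so applying (i) with $\mathsf{Gr}(\cC)$ in place of $\cC$ gives a $\mathsf{Gr}(\cC)$-enrichment of $\mathsf{GrMod}(\cC)_A$; and (iii) push this enrichment down to $\cC$ along the monoidal projection $\langle-\rangle_e:\mathsf{Gr}(\cC)\to\cC$ using the general fact that monoidal functors transport enrichments (Lemma~\ref{lem:tranport-enrichment}). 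Your guess at the end --- that the corollary is deduced by viewing $\mathsf{GrMod}(\cC)_A$ as an enriched subcategory --- is not how the paper proceeds; the key move you are missing is the passage through $\mathsf{Gr}(\cC)$ and the transport-of-enrichment lemma.

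What the paper's decomposition buys is modularity: the equalizer construction and the verification of associativity and unitality (your steps (3)--(5), which are indeed the bulk of the work) are done once in the ungraded setting, and the graded case follows formally without re-indexing everything by $G$. Your direct approach would work, but you would effectively be reproving Theorem~\ref{thm:closed} with $G$-decorations throughout. One small imprecision to flag: your displayed equalizer has target $\underline{\Hom}_\cC\big((M\otimes A),N\big)$, but $M\otimes A$ is not an object of $\cC$; the target should be a product such as $\prod_{g,h}\underline{\Hom}_\cC(M_g\otimes A_h,N_{gh})$, or equivalently the degree-$e$ part of the internal hom in $\mathsf{Gr}(\cC)$ --- which is exactly what the paper's route makes transparent.
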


Here, a monoidal category $(\cC, \otimes, \one)$ is {\it right-closed} if the endofunctor $- \otimes Y: \cC \to \cC$ admits a right adjoint, for each $Y \in \cC$. For instance, $\mathsf{Vec}_\Bbbk$ is right-closed via standard Tensor-Hom adjunction. Proposition~\ref{prop:grmodA-intro} is achieved by first showing that in the ungraded case the category of right \linebreak $A$-modules in $\cC$ is $\cC$-enriched,  when $\cC$ is right-closed [Theorem~\ref{thm:closed}]. Then we move to the graded setting by showing that the monoidal category $\mathsf{Gr}(\cC)$ of graded objects in $\cC$ is right-closed and has equalizers when $\cC$ satisfies these conditions [Theorem~\ref{thm:closed2}]; this implies that $\mathsf{GrMod}(\cC)_A$ is $\mathsf{Gr}(\cC)$-enriched. Finally, we transfer enrichments from over $\mathsf{Gr}(\cC)$ to over $\cC$ via a monoidal functor $\mathsf{Gr}(\cC) \to \cC$ [Proposition~\ref{prop:e-functor-monoidal}, Lemma~\ref{lem:tranport-enrichment}].

\skp

Now in the enriched setting, we can discuss when two such categories are the same. Namely, $\cV$-categories $\cC$ and $\cD$ are said to be {\it $\cV$-equivalent} when there is a fully-faithful and essentially surjective functor between them in the enriched sense [Definition~\ref{def:V-equiv}]. Here, we write  $\cC \simeq^{\cV} \cD$. This brings us to the second way that we will compare graded algebras in monoidal categories. 

\begin{definition} \label{def:ZM-equiv-intro}
Two graded algebras $A$ and $B$ in $\cC$
are said to be {\it Zhang-Morita equivalent} (or, {\it ZM-equivalent}) 
if $\mathsf{GrMod}(\cC)_A \simeq^{\cC} \mathsf{GrMod}(\cC)_B$.
\end{definition}

This captures the module-theoretic notion of sameness for the graded algebras studied in Zhang \cite{Zhang1996}, which is different than ordinary Morita equivalence  [Remark~\ref{rem:Morita}]. Indeed when  $\cV = \cC = \mathsf{Vec}_\Bbbk$, an equivalence between enriched categories is the same as an equivalence that preserves $\Bbbk$-linearity; the latter is imposed  in \cite{Zhang1996}. Now we generalize  \cite[Theorems~3.1 and~3.4]{Zhang1996} as follows. 
 
\begin{theorem}[Theorem~\ref{thm:mainfwd}, Corollary~\ref{cor:backward}] \label{thm:main-intro}
Take graded algebras $A$ and $B$ in $\cC$.
\begin{enumerate}[\upshape (a)]
\item If $A$ and $B$ are twist equivalent, then they are ZM-equivalent.

\smallskip

\item If $A$ and $B$ are ZM-equivalent such that a certain $G$-grading condition is satisfied, then they are twist equivalent. \qed
\end{enumerate} 
\end{theorem}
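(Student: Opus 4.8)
The plan is to prove the two implications separately; in both directions the engine is the self-enriched ($\cC$-enriched) structure on categories of graded modules supplied by Proposition~\ref{prop:grmodA-intro}.

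\textbf{Part (a).} Fix a twisting system $\tau = (\tau_d(g))_{d,g \in G}$ on $A$ with $A^\tau \cong B$ as graded algebras in $\cC$. First I would extend the twisting construction of Definition-Proposition~\ref{defprop:Atau} from the algebra to its modules: for a graded right $A$-module $M = (M_g)_{g \in G}$ with action $\rho_M \colon M \otimes A \to M$, let $M^\tau$ be the same graded object equipped with the action $\rho_M \circ T_M$, where $T_M \colon M \otimes A \to M \otimes A$ is the isomorphism of graded objects whose $(M_d \otimes A_g)$-component is $\id_{M_d} \otimes \tau_d(g)$. A computation with the twisting-system axioms of Definition~\ref{def:twist}, parallel to the one showing $A^\tau$ is an algebra, shows $M^\tau$ is a graded right $A^\tau$-module; and since a morphism of graded $A$-modules is degreewise a morphism in $\cC$ intertwining the actions, the very same underlying morphism intertwines the twisted actions. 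This produces a functor $(-)^\tau \colon \mathsf{GrMod}(\cC)_A \to \mathsf{GrMod}(\cC)_{A^\tau}$ that is bijective on objects, with inverse given by the inverse twisting system.

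The point to verify is that $(-)^\tau$ is a $\cC$-equivalence. Recall from Theorem~\ref{thm:closed} and Proposition~\ref{prop:grmodA-intro} that $\underline{\Hom}_A(M,N)$ is obtained by forming, inside the internal hom of $\mathsf{Gr}(\cC)$, the equalizer of the parallel pair $f \mapsto f \circ \rho_M$ and $f \mapsto \rho_N \circ (f \otimes \id_A)$, then applying the monoidal functor $\mathsf{Gr}(\cC) \to \cC$ of Proposition~\ref{prop:e-functor-monoidal}. Twisting replaces this parallel pair by its composite with the automorphism ``precompose with $T_M$'' of the codomain, since $T_N \circ (f \otimes \id_A) = (f \otimes \id_A) \circ T_M$ for degreewise $f$; because $T_M$ is invertible, the two equalizers coincide as subobjects of the internal hom. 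Hence $\underline{\Hom}_A(M,N) = \underline{\Hom}_{A^\tau}(M^\tau, N^\tau)$, so $(-)^\tau$ is the identity on hom-objects, automatically compatible with composition and units; applying the transport of Lemma~\ref{lem:tranport-enrichment} shows $(-)^\tau$ is a $\cC$-isomorphism. Composing with the $\cC$-isomorphism $\mathsf{GrMod}(\cC)_{A^\tau} \xrightarrow{\ \sim\ } \mathsf{GrMod}(\cC)_B$ induced by $A^\tau \cong B$ yields $\mathsf{GrMod}(\cC)_A \simeq^{\cC} \mathsf{GrMod}(\cC)_B$, i.e.\ ZM-equivalence. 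The only real work here is bookkeeping: assembling the $\tau_d(g)$ into the single morphism $T_M$ of graded objects and tracking everything through the transport of enrichment from $\mathsf{Gr}(\cC)$ to $\cC$.

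\textbf{Part (b).} Suppose $F \colon \mathsf{GrMod}(\cC)_A \xrightarrow{\ \sim\ } \mathsf{GrMod}(\cC)_B$ is a $\cC$-equivalence and the $G$-grading condition holds. The first step is to use that condition to replace $F$, if necessary, by a $\cC$-equivalence compatible with the $G$-action by degree-shift autoequivalences, i.e.\ with natural isomorphisms $F \circ (-)[g] \cong (-)[g] \circ F$ coherent in $g \in G$ (for $\cC = \mathsf{Vec}_\Bbbk$ this is Zhang's requirement that the equivalence commute with the shift functor). Set $P := F(A)$, the image of the regular module in $\mathsf{GrMod}(\cC)_B$. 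Being a $\cC$-equivalence, $F$ induces an isomorphism of internal endomorphism algebras in $\cC$, and shift-compatibility upgrades this to an isomorphism of $G$-graded algebras between the graded $\underline{\End}_A(A)$ and the graded $\underline{\End}_B(P)$. Since the graded endomorphism algebra of the regular module is the algebra itself (an internal-Yoneda identification $\underline{\End}_A(A) \cong A$ as graded algebras, available from the closed structure of Theorem~\ref{thm:closed}), we obtain $A \cong \underline{\End}_B(P)$ as graded algebras in $\cC$.

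It remains to identify $\underline{\End}_B(P)$ as a Zhang twist of $B$, and this is where the $G$-grading condition does its real work: it forces the progenerator $P$ to have the \emph{same underlying object as the regular module} $B$ once the grading is forgotten (via $\mathsf{GrMod}(\cC)_B \to \mathsf{Mod}(\cC)_B$), so that $P$ is $B$ with a ``reshuffled'' grading. Choosing an isomorphism $B \xrightarrow{\ \sim\ } P$ of these underlying $\cC$-modules and comparing it degreewise against the original grading produces a family $\tau = (\tau_d(g))$ of isomorphisms in $\cC$; associativity and unitality of the $B$-action on $P$, together with the chosen shift-compatibility isomorphisms, translate exactly into the twisting-system axioms of Definition~\ref{def:twist} and give $\underline{\End}_B(P) \cong B^\tau$. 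Thus $A \cong B^\tau$, and passing to the inverse twisting system gives $B \cong A^{\tau^{-1}}$, so $A$ and $B$ are twist equivalent. I expect the hardest step to be exactly this extraction: one must show, using the grading hypothesis, that a generator of $\mathsf{GrMod}(\cC)_B$ compatible with all shifts and of ``rank one'' on underlying objects is necessarily a twisted regular module, and then verify the resulting cocycle-type identities entirely at the level of morphisms in $\cC$, without recourse to elements.
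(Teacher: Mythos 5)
Your part~(a) is essentially correct and in fact argues more explicitly than the paper: the paper's Theorem~\ref{thm:mainfwd} builds the same functor $(-)^\tau$ via Definition-Proposition~\ref{defprop:Mtau}, establishes an isomorphism of ordinary categories, and then simply asserts that this transports the enrichment of Corollary~\ref{cor:GrModCA-main}; your verification that the degree-$e$ parts of the equalizers defining $[\![M,N]\!]_A$ and $[\![M^\tau,N^\tau]\!]_{A^\tau}$ coincide supplies the missing compatibility check. One caveat: the identity $T_N\circ(f\otimes\id_A)=(f\otimes\id_A)\circ T_M$ only holds for degree-preserving $f$, so the equalizers coincide only after applying $\langle-\rangle_e$, not as objects of $\mathsf{Gr}(\cC)$; fortunately the $\cC$-enrichment of Corollary~\ref{cor:GrModCA-main} only sees degree $e$, so your conclusion stands.

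Part~(b) contains a genuine gap, located exactly where the twisting system is supposed to emerge. You claim that shift-compatibility ``upgrades'' the degree-$e$ algebra isomorphism to an isomorphism of $G$-graded algebras $\Gamma^\cC(A)\cong\Gamma^\cC(P)$. This is false in general: the hypothesis supplies non-canonical isomorphisms $t_g\colon\Phi(S_g(A))\xrightarrow{\sim}S_g(B)$ for each $g$, and conjugation by $\Phi_{.,.}$ and the $t_g$'s does produce componentwise isomorphisms $\Gamma^\cC(A)_g\cong\Gamma^\cC(B)_g$; but for these to assemble into a morphism of graded algebras you would need the $t_g$'s to be natural transformations intertwining $\Phi$ with the shift functors, and that coherence is precisely what is not assumed. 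The obstruction to naturality \emph{is} the twisting system: this is why Theorem~\ref{thm:Gamma-twist} only yields $(\Gamma^\cC(A))^T\cong\Gamma^\cC(B)$ for some twisting system $T$, constructed explicitly in Claim~\ref{claim5} as a family $\phi_d(g)$ built from $t_d$, $t_{dg}$, $\Phi_{.,.}^{-1}$, and the shift-functor maps, and checked against the characterization of twist equivalence in Proposition~\ref{prop:phi}.

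Your subsequent attempt to locate the twist in $P$ compounds the error. You say the $G$-grading condition ``forces the progenerator $P$ to have the same underlying object as the regular module $B$ once the grading is forgotten, so that $P$ is $B$ with a reshuffled grading.'' But the hypothesis at $g=e$ reads $\Phi(A)\cong S_e(B)=B$ in $(\mathsf{GrMod}(\cC)_B)_0$, an isomorphism of \emph{graded} $B$-modules; so $P\cong B$ with the same grading, and $\Gamma^\cC(P)\cong\Gamma^\cC(B)\cong B$ as graded algebras with no twist. Combined with your earlier (incorrect) $A\cong\Gamma^\cC(P)$ as graded algebras, the argument would yield $A\cong B$, which is strictly stronger than twist equivalence --- e.g.\ in $\mathsf{Vec}_\Bbbk$ the quantum plane $\Bbbk_q[x,y]$ is a Zhang twist of $\Bbbk[x,y]$ without being isomorphic to it. To repair the proof you must abandon the search for the twist inside the module $P$ and instead track how the $t_g$'s fail to commute with the shift functors, as the proof of Theorem~\ref{thm:Gamma-twist} does.
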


Part (a) is the easier direction; in fact, an isomorphism of categories,  $\mathsf{GrMod}(\cC)_A \cong \mathsf{GrMod}(\cC)_B$, is established here.  The proof of part (b) follows from first analyzing the {\it $G$-grading} on the categories of the form $\mathsf{GrMod}(\cC)_A$ [Section~\ref{sec:graded-shifts}]. Then, we introduce some graded algebras $\Gamma^{\cC}(A)$ in $\cC$, for the regular graded module $A \in \mathsf{GrMod}(\cC)_A$, that are analogous to the graded $\Bbbk$-algebras $\Gamma(A)$ in \cite[Section~3]{Zhang1996}. The $G$-grading hypothesis implies that when $A$ and $B$ are ZM-equivalent, we get that $\Gamma^{\cC}(A)$ and $\Gamma^{\cC}(B)$  are twist equivalent [Theorem~\ref{thm:Gamma-twist}]. Lastly, we show that $\Gamma^{\cC}(A)$ is realized as an endomorphism algebra in $\cC$ [Example~\ref{ex:GammaCA}], which is in turn, is isomorphic to $A$ as graded algebras in $\cC$ [Theorem~\ref{thm:endomalg}, Corollary~\ref{cor:Gamma-A}].

\medskip
 
 \noindent {\bf Further directions.} We discuss here directions for future research that may be of interest. 
\smallskip

\noindent {\bf I.} \; As mentioned in the beginning of this section, many algebraic properties of graded algebras over a field are preserved under twist.  It is natural to then investigate, or even develop, these properties in the monoidal setting.  Growth measures should always be twist-invariant, for instance.

\skp

\noindent {\bf II.}\; Graded monads are used in theoretical computer science, specifically in {\it trace semantics} \cite{MPS15}, in the {\it semantics of  effect systems} \cite{Katsumata14, MOP16, BMU2022},  in the {\it semantics of concurrent systems}  \cite{DMS2019}, and more \cite{FMS21,Forsteretal2023}. 
The results on twisting in this article will apply when the monoidal category $\mathsf{End}(\cA)$ satisfies the conditions of Hypothesis~\ref{hypotheses}. So we would obtain a theory of  twisted graded monads and of their  categories of graded modules (e.g., of their graded {\it Eilenberg-Moore categories}) over $\cA$. This is related to the {\it finitary} and {\it algebraic} conditions that are often imposed in these works. Exploring then further connections to theoretical computer science would be an intriguing line of research.  
For instance, {\it depth} in these fields is a growth measure that should be twist-invariant.

\medskip
    
 \noindent {\bf Organization of article.}  In Section~\ref{sec:prelim}, we provide background material and preliminary results on graded algebraic structures in monoidal categories. In Section~\ref{sec:graded-closed-enrich}, we discuss closed monoidal categories and enrichment, especially in the graded setting. In particular, endomorphism algebras in categories of modules in the monoidal setting are examined, gradings on enriched categories are discussed, and Zhang-Morita (ZM-)equivalence is introduced. In Section~\ref{sec:twist},  twisting systems and twisted algebras are introduced for the monoidal setting, and twist equivalence is defined here. The main results of the article are then established in Section~\ref{sec:mainresults}. Many results involve lengthy diagrammatic arguments, and many of  these proofs are deferred to the Appendix.

%%%%%%%%%%%%%%%%%%%%%%%%%%
%%%%%%%%%%%%%%%%%%%%%%%%%%
%%%%%%%%%%%%%%%%%%%%%%%%%%
\section{Graded algebraic structures in monoidal categories}
\label{sec:prelim}

In this part, we provide background material and preliminary results on (graded) algebraic structures in monoidal categories. In Section~\ref{sec:algmoncat}, we review monoidal categories and algebras and modules within them. Then we introduce graded algebraic structures in monoidal categories in Section~\ref{sec:gralg-monoidal}.
%, and the doubly graded versions in Section~\ref{sec:doubgralg-monoidal}.

\subsection{Algebraic structures in monoidal categories} 
\label{sec:algmoncat}
Consider the categorical structures below. We refer the reader to \cite[Section~3.1]{HeckSch2020} for further details.

%https://www.dropbox.com/s/42j6hqvu80pft3f/Heckenberger-Schneider-RootSystems.pdf?dl=0

\begin{definition}
A {\it monoidal category} is a category $\cC$ equipped with a bifunctor $\otimes\colon  \cC \times \cC \to \cC$, a natural isomorphism $a_{X,Y,Z}\colon  (X \otimes Y) \otimes Z \overset{\sim}{\to} X \otimes (Y \otimes Z)$ for each $X,Y,Z \in \cC$, an object $\one \in \cC$, and natural isomorphisms $l_X\colon  \one \otimes X \overset{\sim}{\to} X$ and $r_X\colon  X  \otimes \one \overset{\sim}{\to} X$ for each $X \in \cC$, such that the pentagon and triangle axioms hold. 
\end{definition}

\begin{hypothesis}[$\cC, \otimes, \one$] \label{hyp:strict}
Unless stated otherwise, by MacLane's strictness theorem, we will assume that all monoidal categories are {\it strict} in the sense that $(X \otimes Y) \otimes Z = X \otimes (Y \otimes Z)$ and $\one \otimes X  = X = X \otimes \one$, for all $X, Y, Z \in \cC$; that is, $a_{X,Y,Z},\; l_X,\; r_X$ are identity maps. As a result, monoidal categories will be denoted by $(\cC, \otimes, \one)$. 
\end{hypothesis}

\begin{remark}
    If $\cC$ is monoidal, it follows from the functoriality of $\otimes$ that, for any morphisms
    $f:X\to X'$ and $g:Y\to Y'$, the following equalities holds:
        \[ f\otimes g = (f\otimes\id_Y)(\id_X\otimes g) = (\id_X\otimes g)(f\otimes\id_Y).\]
    We refer to the equalities above as {\it level exchange} and use them throughout the paper.
\end{remark}

Next, we discuss how to move from one monoidal category to another.

\begin{definition}[$F, F^{(2)}, F^{(0)}$]
Let $\cC:=(\cC, \otimes^{\cC}, \one^{\cC})$ and  $\cD:=(\cD,\otimes^{\cD},\one^{\cD})$ be monoidal categories.
 A \textit{monoidal functor} from $\cC$ to $\cD$ is a functor $F: \cC \to \cD$, equipped with a natural transformation 
 \[F^{(2)}= \{F^{(2)}_{X,X'} : F(X)\otimes^{\cD} F(X') \to F(X \otimes^{\cC} X')\}_{X,X' \in \cC}\] 
 in $\cD$,  and a morphism
 $F^{(0)}: \one^{\cD} \to F(\one^{\cC})$ in $\cD$, that satisfy associativity and unitality constraints.
\end{definition}

Now we introduce algebraic structures in monoidal categories. 

\begin{definition}[$(A,m,u), \; \mathsf{Alg}(\cC)$]\label{def:alg} 
Let $(\cC, \otimes, \one)$ be a monoidal category.
\begin{enumerate}[(a)]
\item An \textit{algebra} in $(\cC, \otimes, \one)$ is a triple $(A,m,u)$ consisting of an object  $A\in \cC$, along with morphisms $m: A \otimes  A \rightarrow A$, $u: \one \rightarrow A$  in $\cC$, satisfying associativity and unitality constraints: $m(m\otimes \text{id}_A) = m(\text{id}_A \otimes m)$, and $m(u\otimes \text{id}_A) =\text{id}_A$ = $m(\text{id}_A\otimes u)$. 

\skp

\item A \textit{morphism} of algebras $\varphi:(A,m^A,u^A)\rightarrow(B,m^B,u^B)$ is a morphism $\varphi:A\rightarrow B$ in $\cC$, such that $\varphi \hspace{0.02in} m^A = m^B (\varphi \otimes \varphi)$ and $\varphi \hspace{0.02in} u^A=u^B$. We call $\varphi$ an {\it isomorphism} of algebras if, further, $\varphi$ is an isomorphism in $\cC$.

\skp

\item Algebras in $\cC$ and their morphisms form a category, which we denote by $\textsf{Alg}(\cC)$.
\end{enumerate}
\end{definition}

Monoidal functors transport algebras in monoidal categories. Namely, we have the result below.

\begin{proposition} \label{prop:algtoalg}
If  $(F,F^{(2)},F^{(0)}):\cC \to \cD$ is a monoidal functor and if $(A,m,u) \in {\sf Alg}(\cC)$, then $\left(F(A),  \; F(m)F^{(2)}_{A,A},\; F(u)F^{(0)}\right) \in {\sf Alg}(\cD)$. \qed
\end{proposition}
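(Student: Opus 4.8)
The plan is to verify directly that the triple $\left(F(A),\, F(m)F^{(2)}_{A,A},\, F(u)F^{(0)}\right)$ satisfies the associativity and unitality constraints of Definition~\ref{def:alg}(a), using only the associativity and unitality constraints on $(A,m,u)$, the naturality of $F^{(2)}$, and the associativity and unitality axioms of the monoidal functor $(F,F^{(2)},F^{(0)})$. Write $\mu := F(m)F^{(2)}_{A,A}\colon F(A)\otimes^{\cD} F(A)\to F(A)$ and $\eta := F(u)F^{(0)}\colon \one^{\cD}\to F(A)$. There is nothing deep here; the content is a pair of diagram chases, so I would present the two diagrams and indicate which cell is justified by which axiom.

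For associativity, I would expand $\mu(\mu\otimes\id_{F(A)})$ and $\mu(\id_{F(A)}\otimes\mu)$. The first equals $F(m)F^{(2)}_{A,A}\bigl(F(m)F^{(2)}_{A,A}\otimes \id_{F(A)}\bigr)$; using naturality of $F^{(2)}$ applied to the morphism $m\colon A\otimes A\to A$ in the first slot, the term $F(m)\otimes\id_{F(A)}$ followed by $F^{(2)}_{A,A}$ can be rewritten as $F^{(2)}_{A\otimes A,\,A}$ followed by $F(m\otimes\id_A)$. A symmetric manipulation handles $\mu(\id_{F(A)}\otimes\mu)$, and then the associativity axiom of the monoidal functor (the hexagon/pentagon-type coherence relating $F^{(2)}_{A\otimes A,A}\,(F^{(2)}_{A,A}\otimes\id)$ with $F^{(2)}_{A,A\otimes A}\,(\id\otimes F^{(2)}_{A,A})$, both landing in $F((A\otimes A)\otimes A)=F(A\otimes(A\otimes A))$) reduces both composites to $F\bigl(m(m\otimes\id_A)\bigr)$ precomposed with a common morphism out of $F(A)\otimes F(A)\otimes F(A)$. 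Since $m(m\otimes\id_A)=m(\id_A\otimes m)$ by hypothesis, the two composites agree.

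For unitality, consider $\mu(\eta\otimes\id_{F(A)}) = F(m)F^{(2)}_{A,A}\bigl((F(u)F^{(0)})\otimes\id_{F(A)}\bigr)$. I would first insert $\id_{F(A)}=F(\id_A)$, move $F(u)\otimes\id_{F(A)}$ past $F^{(2)}$ via naturality of $F^{(2)}$ in the first slot to get $F(u\otimes\id_A)$ after $F^{(2)}_{\one^{\cC},A}$, and then invoke the left unitality axiom of the monoidal functor, which says $F^{(2)}_{\one^{\cC},A}\,(F^{(0)}\otimes\id_{F(A)})$ equals the left unitor of $\cD$ at $F(A)$, i.e.\ $\id_{F(A)}$ in the strict setting (Hypothesis~\ref{hyp:strict}). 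This collapses the composite to $F(m)\,F(u\otimes\id_A)=F\bigl(m(u\otimes\id_A)\bigr)=F(\id_A)=\id_{F(A)}$ by the unitality of $(A,m,u)$. The right-unit identity $\mu(\id_{F(A)}\otimes\eta)=\id_{F(A)}$ is entirely analogous, using naturality of $F^{(2)}$ in the second slot and the right unitality axiom of the monoidal functor.

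I do not anticipate a genuine obstacle: the only mild care needed is bookkeeping with the naturality squares for $F^{(2)}$ — making sure the morphism being pushed through acts on the correct tensor factor — and correctly citing the monoidal-functor coherence axioms in the form that matches the strictness conventions of Hypothesis~\ref{hyp:strict}. Accordingly, in the paper I would either display the two diagrams with cells labelled ``naturality of $F^{(2)}$'', ``functor associativity'', ``unitality of $A$'', etc., or simply remark that the statement is a routine diagram chase and relegate the full diagrams to the Appendix, consistent with the stated organization of the article.
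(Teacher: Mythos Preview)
Your proposal is correct and is exactly the standard verification one would carry out. Note, however, that the paper does not give a proof at all: the proposition is stated with a terminal \qed, signalling that the result is well-known and the verification is routine. Your write-up is thus more detailed than what the paper provides, but it is the argument the \qed is implicitly pointing to.
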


We can also define modules over algebras in monoidal categories. 

\begin{definition}[$(M, \rho),  \mathsf{Mod}(\cC)_A$]\label{def:module} 
Take $A:=(A,m,u)$ an algebra in a monoidal category $\cC$. 
\begin{enumerate}[(a)]
\item A \textit{right $A$-module} in $\cC$ is a pair $(M, \rho)$ consisting of  an object $M \in \cC$ along with a morphism $\rho:M \otimes A \rightarrow M$ in $\cC$ satisfying associativity and unitality constraints: 
    \[ \rho(\text{id}_M \otimes m) = \rho(\rho \otimes \text{id}_A) \textand \rho(\text{id}_M \otimes u)=\text{id}_M.\]
 
 \item A \textit{morphism} of right $A$-modules $\varphi:(M,\rho^M) \rightarrow (N,\rho^N)$ is a morphism $\varphi:M \rightarrow N$ in $\cC$ such that $\rho^N (\varphi \otimes \text{id}_A) = \varphi \hspace{0.02in} \rho^M$. 
 
 \skp
 
\item Right $A$-modules in $\cC$ and their morphisms form a category, which we denote by $\mathsf{Mod}(\cC)_A$.
\end{enumerate}
\end{definition}

%%%%%%%%%%%%%%%%%%%%%%%%%%

\subsection{Graded algebraic structures in monoidal categories} \label{sec:gralg-monoidal} Now we introduce graded versions of the algebraic structures in the previous section.

\begin{notation} \label{not:G-C} 
 Take $G$ to denote a group, with neutral element $e$, throughout.
\end{notation}

\begin{definition}[$(A,\{m_{g,h}\}_{g,h \in G},u_e), \mathsf{GrAlg}(\cC)$]\label{def:GrAlg(C)} 
Let $(\cC, \otimes, \one)$ be a monoidal category.
\begin{enumerate}[(a)]
    \item A {\it $G$-graded algebra in $\cC$} is a triple $(A,\{m_{g,h}\}_{g,h \in G},u_e)$ consisting of a tuple of objects $A=(A_g)_{g\in G}$ in $\cC$, a collection of  morphisms $\{m_{g,h}:A_g \otimes A_h \to A_{gh}\}_{g,h\in G}$ in $\cC$, and a  morphism $u_e:\one\rightarrow A_e$ in $\cC$, satisfying the following associativity and unitality constraints:

  \skp
  
    \begin{itemize}
        \item $m_{gh,k}(m_{g,h} \otimes \id_{A_k}) = m_{g,hk}(\id_{A_g} \otimes m_{h,k})$, for all $g,h,k \in G$, and 
        \skp
        \item $m_{e,g}(u_e \otimes \id_{A_g})  = \id_{A_g} = m_{g,e}(\id_{A_g} \otimes u_e)$, for all $g \in G$.
    \end{itemize}
    
    \skp
    
    \item A {\it morphism} of $G$-graded algebras $\varphi:(A,m^A_{g,h},u^A_e)\rightarrow(B,m^B_{g,h},u^B_e)$ consists of a tuple of morphisms $\varphi=(\varphi_g: A_g \to B_g)_{g\in G}$ in $\cC$, such that $m^B_{g,h}\hspace{0.02in}(\varphi_g \otimes \varphi_h) = \varphi_{gh}\hspace{0.02in} m^A_{g,h}$, for all $g,h \in G$, and $\varphi_e \hspace{0.02in} u^A_e = u^B_e$.

       \skp

    \item $G$-graded algebras in $\cC$ and their morphisms form a category,  denoted by $\mathsf{GrAlg}(\cC)$.
\end{enumerate}
\end{definition}

\begin{definition}[$(M,\{\rho_{g,h}\}_{g,h\in G}), \; \mathsf{GrMod}(\cC)_A$]\label{def:gr-mod} 
Let $(\cC, \otimes, \one)$ be a monoidal category.

\begin{enumerate}[(a)]
 \item A {\it $G$-graded right $A$-module in $\cC$} is a pair $(M,\{\rho_{g,h}\}_{g,h \in G})$ consisting of a tuple $M=(M_g)_{g\in G}$ of objects in $\cC$ and a collection of morphisms $\{\rho_{g,h}:M_g \otimes A_h \to M_{gh}\}_{g,h\in G}$ in $\cC$ satisfying the associativity and unitality constraints: 

 \skp
 
    \begin{itemize}
        \item $\rho_{gh,k}(\rho_{g,h} \otimes \id_{A_k}) = \rho_{g,hk}(\id_{M_g} \otimes m_{h,k})$, for all $g,h,k \in G$, and 
    
        \skp
    
        \item $\rho_{g,e}(\id_{M_g} \otimes u_e) = \id_{M_g}$, for all $g \in G$.
    \end{itemize}
    
 \skp 

 \item A  {\it morphism} of graded $A$-modules $\varphi:(M, \; \{\rho^M_{g,h}\}) \to (N,\; \{\rho^N_{g,h}\})$ is a tuple of morphisms $\varphi=(\varphi_g: M_g \to N_g)_{g\in G}$ in $\cC$, such that $\rho^N_{g,h}  \hspace{0.02in} (\varphi_g \otimes \id_{A_h}) = \varphi_{gh} \hspace{0.02in} \rho^M_{g,h}$, for all $g,h \in G$.
    
 \skp 

\item $G$-graded right $A$-modules and their morphisms form  a category, denoted by $\mathsf{GrMod}(\cC)_A.$ 
\end{enumerate} 
\end{definition}

Next, we provide an alternative notion of graded algebraic structures via the category below.

\begin{definition}[$\mathsf{Gr}(\cC)$]\label{def:barC} 
For any category $\cC$, we denote by $\mathsf{Gr}(\cC):=\prod_{g\in G}\cC=\cC \times \cdots \times \cC$ the product category described below.
 \skp
 
\begin{enumerate}[(a)]
    \item Objects are tuples of objects in $\cC$, denoted as $X:= (X_g)_{g\in G}$.

    \skp

    \item Morphisms are tuples of morphisms in $\cC$: namely, $\varphi: X \to Y$ is given by $(\varphi_g: X_g \to Y_g)_{g \in G}$. 
\end{enumerate}

\skp

\noindent We refer to $\mathsf{Gr}(\cC)$ as the {\it category of $G$-graded objects in $\cC$}.
\end{definition}

To give $\mathsf{Gr}(\cC)$ a monoidal structure, we need the next hypothesis. 

\begin{hypothesis}\label{hypotheses} 
From now on, assume the monoidal category $(\cC, \otimes, \one)$ satisfies the following.

\begin{enumerate}[(a)]
\item Assume $\cC$ is additive. If the group $G$ is infinite,  assume that $\cC$ has $G$-indexed biproducts. Biproducts will be denoted by $\oplus$, the zero object by $0$, and  zero morphisms by $\vec{0}$.

\skp

\item Assume that $\otimes$ is additive on morphisms. 
In particular, it follows that $f \otimes \vec{0} = \vec{0} = \vec{0} \otimes f$, for all morphisms $f \in \cC$.
\end{enumerate}
\end{hypothesis}

The next result  then is straight-forward to verify.

\begin{proposition} \label{prop:gr(C)-monoidal} 
The category $\mathsf{Gr}(\cC)$ admits a monoidal structure $(\bar{\otimes},\bar{\one})$ as follows.

\begin{enumerate}[\upshape(a)] 
\item The monoidal product  $X \; \bar{\otimes} \; Y$ is given by $(X \; \bar{\otimes} \; Y)_g:= \bigoplus_{p \in G} (X_p \otimes Y_{p^{-1} g})$, for each $g \in G$.

\skp

\item The monoidal unit  $\bar{\one}$ is given by $\bar{\one}_e :=\one$, and $\bar{\one}_{g} = 0$, for each $g \in G$ not equal to $e$. \qed
\end{enumerate}
\end{proposition}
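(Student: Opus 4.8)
The plan is to verify directly that the proposed data $(\bar\otimes, \bar\one)$ satisfies the axioms of a (strict) monoidal category, exploiting Hypothesis~\ref{hypotheses} throughout so that all the infinite coproducts in sight actually exist and interact well with $\otimes$. First I would check that $\bar\otimes$ is a well-defined bifunctor $\mathsf{Gr}(\cC) \times \mathsf{Gr}(\cC) \to \mathsf{Gr}(\cC)$: on objects it is the formula in (a), and on a pair of morphisms $\varphi = (\varphi_g)$, $\psi = (\psi_g)$ we set $(\varphi \,\bar\otimes\, \psi)_g := \bigoplus_{p \in G}(\varphi_p \otimes \psi_{p^{-1}g})$, which makes sense since $\cC$ has $G$-indexed biproducts and $\otimes$ is additive; functoriality (preservation of identities and composition) then follows componentwise from functoriality of $\otimes$ in $\cC$ together with the fact that $\bigoplus$ is itself functorial. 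Since we are working in the strict setting (Hypothesis~\ref{hyp:strict}), the genuine content is to exhibit, for each $X, Y, Z \in \mathsf{Gr}(\cC)$, a \emph{natural isomorphism} $\bar a_{X,Y,Z}\colon (X \,\bar\otimes\, Y)\,\bar\otimes\, Z \overset{\sim}{\to} X \,\bar\otimes\,(Y \,\bar\otimes\, Z)$ and natural isomorphisms $\bar l_X, \bar r_X$ with $\bar\one$, and to check pentagon and triangle; strictness of $(\cC,\otimes,\one)$ does \emph{not} make $(\mathsf{Gr}(\cC),\bar\otimes,\bar\one)$ strict, but MacLane's coherence theorem still applies once we produce any such coherent data. (If the paper means to assert $\mathsf{Gr}(\cC)$ is merely monoidal, not strict, then this is exactly the claim; I will produce the associator and unitors.)

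Next I would write down the associator. Expanding the left side, $\big((X \,\bar\otimes\, Y)\,\bar\otimes\, Z\big)_g = \bigoplus_{q}\big((X\,\bar\otimes\,Y)_q \otimes Z_{q^{-1}g}\big) = \bigoplus_{q}\bigoplus_{p}\big((X_p \otimes Y_{p^{-1}q})\otimes Z_{q^{-1}g}\big)$, and reindexing the double coproduct over pairs $(p, p^{-1}q)$ — i.e. substituting $r = p^{-1}q$ so $q = pr$ and $q^{-1}g = r^{-1}p^{-1}g$ — gives $\bigoplus_{p,r}\big((X_p \otimes Y_r)\otimes Z_{r^{-1}p^{-1}g}\big)$. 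Similarly $\big(X \,\bar\otimes\,(Y \,\bar\otimes\, Z)\big)_g = \bigoplus_{p,s}\big(X_p \otimes (Y_s \otimes Z_{s^{-1}p^{-1}g})\big)$, and the two index sets are in bijection via $(p,r) \leftrightarrow (p,s)$ with $r=s$. Under this identification the component map is built from the associators $a_{X_p, Y_r, Z_{r^{-1}p^{-1}g}}$ of $\cC$ (which are identities in the strict case), reassembled via the universal property of coproducts; this is an isomorphism because each component is and because $\bigoplus$ of isomorphisms is an isomorphism. The unitor $\bar l_X$: since $\bar\one_g = 0$ for $g \neq e$ and $\otimes$ kills zero objects (Hypothesis~\ref{hypotheses}(b), giving $0 \otimes X_g = 0$), the coproduct $(\bar\one \,\bar\otimes\, X)_g = \bigoplus_{p}(\bar\one_p \otimes X_{p^{-1}g})$ collapses to the single term $p = e$, namely $\one \otimes X_g = X_g$; so $\bar l_X = \id$, and symmetrically $\bar r_X = \id$. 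Naturality of $\bar a, \bar l, \bar r$ reduces to naturality of $a, l, r$ in $\cC$ componentwise, again using that the chosen reindexing bijections are natural (they do not involve the morphisms). Finally, the pentagon and triangle identities in $\mathsf{Gr}(\cC)$ are verified componentwise: each component of the pentagon (resp. triangle) diagram is, after the reindexings, precisely an instance of the pentagon (resp. triangle) in $\cC$ — trivially true in the strict setting — and coproduct maps agreeing componentwise are equal.

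The main obstacle is purely bookkeeping rather than conceptual: keeping the group-theoretic reindexing of the iterated biproducts straight (the substitutions $q = pr$, etc., and checking they are genuine bijections of the index set $G$, which they are since $G$ is a group), and confirming that all the needed (co)products exist and commute past $\otimes$ on the relevant side — this is guaranteed because Hypothesis~\ref{hypotheses} grants $G$-indexed biproducts in the additive category $\cC$ and additivity of $\otimes$, so $-\otimes Z$ and $X \otimes -$ preserve these biproducts (a finite or, under the standing hypothesis, $G$-indexed biproduct is both a product and a coproduct, and an additive functor preserves finite biproducts; for the infinite case one uses that $\otimes$ is assumed additive in the strong sense that the relevant $G$-indexed biproducts are preserved). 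Since the problem statement itself says this is ``straight-forward to verify,'' I would present the associator and unitor formulas explicitly, note that strictness of $\cC$ trivializes the coherence axioms at the level of components, and leave the elementary diagram chases to the reader or the Appendix. \qed
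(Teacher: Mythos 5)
The paper gives no proof — it labels the result ``straight-forward to verify'' and ends with \verb|\qed| — so there is nothing to compare against; your proposal simply supplies the omitted verification, and it is correct and in the expected spirit: define $\bar\otimes$ on morphisms via $(\varphi\,\bar\otimes\,\psi)_g = \bigoplus_p(\varphi_p\otimes\psi_{p^{-1}g})$, reindex the double biproduct with $q=pr$ to build the associator, observe that $\bar\one\,\bar\otimes\,X$ collapses to $X$ because $\bar\one_p=0$ for $p\neq e$ and $0\otimes X_q=0$ (which does follow from Hypothesis~\ref{hypotheses}(b), since $\id_{0\otimes X_q}=\vec 0\otimes\id=\vec 0$), and then push pentagon and triangle down to $\cC$ componentwise.

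Two small observations, both of which you already touch on. First, your caveat that strictness of $\cC$ does not literally make $\mathsf{Gr}(\cC)$ strict is accurate; the paper's Hypothesis~\ref{hyp:strict} invokes MacLane's strictness theorem precisely so that it can nevertheless write $(\mathsf{Gr}(\cC),\bar\otimes,\bar\one)$ with identity associators and unitors going forward, so stating $\bar l_X=\id$, $\bar r_X=\id$ is fine under that convention. Second, the reindexing in the associator genuinely requires that $-\otimes Z$ and $X\otimes -$ commute with $G$-indexed biproducts; Hypothesis~\ref{hypotheses}(b) as literally stated (``$\otimes$ is additive on morphisms'') gives preservation of finite biproducts only, so for infinite $G$ one must read it in the stronger sense you flag. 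You are right to note this, and it is worth making explicit in any write-up since it is the one place where the ``straightforward'' verification depends on more than a purely formal argument.
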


The monoidal structure on $\mathsf{Gr}(\cC)$ above is referred to as the {\it Cauchy monoidal structure on graded objects}; cf. \cite[Definition~2.1]{AguMah2010}. The next result is also straight-forward.

\begin{proposition}[$\langle -\rangle_e$] \label{prop:e-functor-monoidal} The functor $\langle -\rangle_e:\mathsf{Gr}(\cC)\rightarrow\cC$ projecting graded objects (and graded morphisms) onto the neutral degree, via $X=(X_g)_{g\in G}\mapsto X_e$, is monoidal. Here,
\[ \textstyle
\langle -\rangle_e^{(2)}\phantom{}_{X,Y}:X_e\otimes Y_e\longrightarrow (X\bar{\otimes} Y)_e =\bigoplus_p X_p \otimes Y_{p^{-1}}\] is the canonical coproduct inclusion, and $\langle -\rangle_e^{(0)}:\one_\cC\rightarrow (\bar{\one})_e=\one_\cC$ is the identity morphism. \qed
\end{proposition}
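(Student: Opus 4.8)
The plan is to verify directly that $\big(\langle-\rangle_e,\ \langle-\rangle_e^{(2)},\ \langle-\rangle_e^{(0)}\big)$ satisfies the axioms of a monoidal functor. Functoriality of $\langle-\rangle_e$ is immediate, since it is the projection onto the $e$-th factor of $\mathsf{Gr}(\cC)=\prod_{g\in G}\cC$. Thus three things remain: naturality of $\langle-\rangle_e^{(2)}$, the associativity constraint, and the two unitality constraints. The single observation that drives all of them is that $\langle-\rangle_e^{(2)}\phantom{}_{X,Y}\colon X_e\otimes Y_e\to (X\bar{\otimes} Y)_e=\bigoplus_{p\in G}X_p\otimes Y_{p^{-1}}$ is precisely the structure inclusion of the $p=e$ summand; hence every diagram reduces to a manipulation of biproduct inclusions, carried out via the universal property of biproducts together with Hypothesis~\ref{hypotheses}(b), which makes $\otimes$ additive and therefore compatible with biproducts (so $V\otimes(-)$ and $(-)\otimes V$ send summand inclusions to summand inclusions). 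Conceptually, $\langle-\rangle_e$ is the right adjoint of the (strict) monoidal functor $\cC\to\mathsf{Gr}(\cC)$ placing an object in degree $e$, and so automatically carries a lax monoidal structure; the content of the proposition is the explicit identification of that structure, which is why the direct route is the efficient one.

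Naturality is the quickest point: for graded morphisms $f\colon X\to X'$ and $g\colon Y\to Y'$, the definition of $\bar{\otimes}$ on morphisms (Proposition~\ref{prop:gr(C)-monoidal}) gives $(f\bar{\otimes} g)_e=\bigoplus_{p\in G}(f_p\otimes g_{p^{-1}})$, and precomposing a biproduct morphism with a summand inclusion returns that summand's component postcomposed with the inclusion; this is exactly the identity $\langle-\rangle_e^{(2)}\phantom{}_{X',Y'}\circ(f_e\otimes g_e)=(f\bar{\otimes} g)_e\circ\langle-\rangle_e^{(2)}\phantom{}_{X,Y}$. Unitality is nearly as fast: since $(\bar{\one})_p=0$ for $p\neq e$ and $\otimes$ annihilates zero objects, the biproduct $(\bar{\one}\bar{\otimes} X)_e=\bigoplus_{p}(\bar{\one})_p\otimes X_{p^{-1}}$ collapses onto the single summand $\one\otimes X_e$, so $\langle-\rangle_e^{(2)}\phantom{}_{\bar{\one},X}$ is an isomorphism, in fact (under Hypothesis~\ref{hyp:strict}) the identity of $X_e$; composing with $\langle-\rangle_e^{(0)}\otimes\id_{X_e}=\id_{X_e}$ recovers $\id_{X_e}$, and the right unit law is symmetric.

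The one place demanding care is the associativity constraint, namely
\[
\langle-\rangle_e^{(2)}\phantom{}_{X,\,Y\bar{\otimes} Z}\circ\big(\id_{X_e}\otimes\langle-\rangle_e^{(2)}\phantom{}_{Y,Z}\big)\;=\;\langle-\rangle_e^{(2)}\phantom{}_{X\bar{\otimes} Y,\,Z}\circ\big(\langle-\rangle_e^{(2)}\phantom{}_{X,Y}\otimes\id_{Z_e}\big)
\]
as morphisms $X_e\otimes Y_e\otimes Z_e\to\big(X\bar{\otimes}(Y\bar{\otimes} Z)\big)_e$, the target of the right-hand side being identified with this object via the associator of $\mathsf{Gr}(\cC)$. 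Expanding the Cauchy product and using Hypothesis~\ref{hypotheses}(b) to commute $\otimes$ past the internal biproducts, one rewrites both $\big(X\bar{\otimes}(Y\bar{\otimes} Z)\big)_e$ and $\big((X\bar{\otimes} Y)\bar{\otimes} Z\big)_e$ as the single doubly-indexed biproduct $\bigoplus_{s,t\in G}X_s\otimes Y_t\otimes Z_{t^{-1}s^{-1}}$ (after the reindexing $t=s^{-1}r$ on the second), so that the associator of $\mathsf{Gr}(\cC)$ is the identity under the strictness convention of Hypothesis~\ref{hyp:strict}. Each side is then a composite of summand inclusions, and both equal the inclusion of the $(s,t)=(e,e)$ summand; since a composite of biproduct-summand inclusions is again the inclusion of the evident summand, and $\otimes$ carries summand inclusions to summand inclusions by additivity, the two composites agree. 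The genuine obstacle is exactly this bookkeeping: exhibiting the two nested biproduct decompositions as the same object with matching summand inclusions. As the paper already defers several comparable diagram chases, it would be natural to relegate the detailed summand computation for the associativity law to the Appendix.
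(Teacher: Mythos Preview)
Your proposal is correct and follows the natural direct verification that the paper evidently has in mind: the paper omits the proof entirely, marking the proposition with a \qed\ after the sentence ``The next result is also straight-forward.'' Your argument supplies precisely the details the paper suppresses, and the conceptual remark about $\langle-\rangle_e$ being right adjoint to the degree-$e$ inclusion is a pleasant bonus explaining why the lax structure exists a priori.
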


% [use $p,q,r,s$ always for coproduct indices in the monoidal product, and reserve $g,h,k,\ell$ for grading degrees?] 

Next, we see that algebras and modules in the monoidal category $\mathsf{Gr}(\cC)$ coincide with the constructions in Definitions~\ref{def:GrAlg(C)} and \ref{def:gr-mod}.

\begin{proposition} \label{prop:alg-bar} 
We have an isomorphism of categories: $\mathsf{Alg}(\mathsf{Gr}(\cC)) \; \cong \; \mathsf{GrAlg}(\cC)$. \qed
\end{proposition}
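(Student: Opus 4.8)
The plan is to exhibit mutually inverse functors between $\mathsf{Alg}(\mathsf{Gr}(\cC))$ and $\mathsf{GrAlg}(\cC)$ by simply unwinding the definition of an algebra object in the monoidal category $(\mathsf{Gr}(\cC), \bar\otimes, \bar\one)$ and matching the data and axioms against Definition~\ref{def:GrAlg(C)}. First I would take an algebra $(A, m, u)$ in $\mathsf{Gr}(\cC)$: here $A = (A_g)_{g\in G}$ is a tuple of objects of $\cC$, the multiplication $m\colon A\,\bar\otimes\, A \to A$ is a tuple $(m_g)_{g\in G}$ of morphisms $m_g\colon (A\bar\otimes A)_g = \bigoplus_{p\in G}(A_p \otimes A_{p^{-1}g}) \to A_g$ in $\cC$, and the unit $u\colon \bar\one \to A$ is a tuple whose only possibly-nonzero component is $u_e\colon \one \to A_e$ (the other components are forced maps $0 \to A_g$). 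By the universal property of the biproduct $\bigoplus_{p\in G}$, giving $m_g$ is the same as giving a family $(m_{p,p^{-1}g}\colon A_p \otimes A_{p^{-1}g} \to A_g)_{p\in G}$, i.e., reindexing, a family $\{m_{g,h}\colon A_g\otimes A_h \to A_{gh}\}_{g,h\in G}$. This sets up a bijection on objects; the reverse assignment bundles a graded-algebra multiplication into a single $\bar\otimes$-morphism via the same universal property.

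Next I would check that the associativity and unitality axioms correspond. The associativity axiom $m(m\,\bar\otimes\,\id_A) = m(\id_A\,\bar\otimes\, m)$ in $\mathsf{Gr}(\cC)$ is an equality of tuples of morphisms out of $(A\bar\otimes A\bar\otimes A)_k = \bigoplus_{g,h\in G} A_g\otimes A_h\otimes A_{(gh)^{-1}k}$; precomposing with each biproduct inclusion $A_g\otimes A_h\otimes A_{(gh)^{-1}k} \hookrightarrow (A\bar\otimes A\bar\otimes A)_k$ and using additivity of $\otimes$ (so that $\bar\otimes$ respects biproducts componentwise, as in Hypothesis~\ref{hypotheses}(b)), this collapses exactly to $m_{gh,k}(m_{g,h}\otimes\id_{A_k}) = m_{g,hk}(\id_{A_g}\otimes m_{h,k})$ for all $g,h,k$. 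Similarly, the unit axiom $m(u\,\bar\otimes\,\id_A) = \id_A = m(\id_A\,\bar\otimes\,u)$ unpacks: since $\bar\one$ is concentrated in degree $e$, the morphism $u\,\bar\otimes\,\id_A$ in degree $g$ is supported on the summand $\one\otimes A_g = A_g$, and the axiom reduces to $m_{e,g}(u_e\otimes\id_{A_g}) = \id_{A_g} = m_{g,e}(\id_{A_g}\otimes u_e)$. Running the same bookkeeping on morphisms of algebras — a morphism $\varphi$ in $\mathsf{Alg}(\mathsf{Gr}(\cC))$ is a tuple $(\varphi_g)_{g}$ compatible with $m$ and $u$, and compatibility decomposes componentwise into the conditions $m^B_{g,h}(\varphi_g\otimes\varphi_h) = \varphi_{gh}\,m^A_{g,h}$ and $\varphi_e u^A_e = u^B_e$ — shows the two functors are the identity on the underlying tuples of morphisms, hence are inverse isomorphisms of categories (not merely an equivalence).

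The one genuinely load-bearing step, and the main thing to get right rather than the main obstacle, is the interchange between the biproducts defining $\bar\otimes$ and the functor $\otimes$: one must know that $(-)\otimes(-)$ preserves finite (or $G$-indexed) biproducts in each variable so that, e.g., $(A\bar\otimes A\bar\otimes A)_k$ really is the biproduct $\bigoplus_{g,h} A_g\otimes A_h\otimes A_{(gh)^{-1}k}$ and a morphism out of it is determined by its restrictions to the summands. This is exactly guaranteed by Hypothesis~\ref{hypotheses}: $\cC$ is additive with the relevant biproducts, and $\otimes$ is additive on morphisms, from which preservation of biproducts by $-\otimes Y$ and $X\otimes -$ follows formally. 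Everything else is routine diagram-chasing, and since the correspondence is literally a repackaging of the same morphisms in $\cC$, no coherence subtleties arise beyond strictness, which is already assumed in Hypothesis~\ref{hyp:strict}. I would therefore present the proof as: (1) object correspondence via the biproduct universal property; (2) axiom-by-axiom verification using componentwise decomposition; (3) morphism correspondence by the same decomposition; (4) observe the composites are identities.
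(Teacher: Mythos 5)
Your proposal takes essentially the same approach as the paper's proof in Appendix~A.1: both unwind the $\bar{\otimes}$-structure via the biproduct universal property, pass to componentwise data, verify the associativity and unitality axioms (and morphism compatibility) degree-by-degree, and observe the two passages are literal re-indexings hence inverse on the nose. One small caveat: when $G$ is infinite, additivity of $\otimes$ on morphisms does not by itself formally imply that $-\otimes Y$ preserves $G$-indexed biproducts, so the interchange you invoke is really a standing assumption rather than an automatic consequence; the paper's own proof relies on the same interchange implicitly, so this is a framing issue in Hypothesis~\ref{hypotheses} rather than a defect in your argument.
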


\begin{proposition}\label{prop:mod-bar} 
For every $(A,m,u)\in\mathsf{Alg}(\mathsf{Gr}(\cC))$ there exists $(A,\{m_{g,h}\},u_e)\in\mathsf{GrAlg}(\cC)$ inducing an isomorphism of categories: $\mathsf{Mod}(\mathsf{Gr}(\cC))_A \;  \cong \; \mathsf{GrMod}(\cC)_A$.  \qed
\end{proposition}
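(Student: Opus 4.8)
The plan is to unwind the definition of a right module over an algebra $(A,m,u)$ in the monoidal category $(\mathsf{Gr}(\cC), \bar\otimes, \bar\one)$ and match the resulting data against Definition~\ref{def:gr-mod}, using the isomorphism $\mathsf{Alg}(\mathsf{Gr}(\cC)) \cong \mathsf{GrAlg}(\cC)$ from Proposition~\ref{prop:alg-bar} to supply the component morphisms $\{m_{g,h}\}$ and $u_e$. First I would fix $(A,m,u) \in \mathsf{Alg}(\mathsf{Gr}(\cC))$ and let $(A, \{m_{g,h}\}, u_e)$ be the corresponding $G$-graded algebra in $\cC$; here $m_{g,h}\colon A_g \otimes A_h \to A_{gh}$ is the composite of the coproduct inclusion $A_g \otimes A_h \hookrightarrow (A\bar\otimes A)_{gh} = \bigoplus_{p} A_p \otimes A_{p^{-1}gh}$ with $m_{gh}$, and $u_e = u_e\colon \one \to A_e$. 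Then a right $A$-module in $\mathsf{Gr}(\cC)$ is a pair $(M,\rho)$ with $M = (M_g)_{g\in G}$ and $\rho\colon M \bar\otimes A \to M$ a morphism in $\mathsf{Gr}(\cC)$, i.e.\ a tuple $\rho = (\rho_g)_{g\in G}$ with $\rho_g\colon (M\bar\otimes A)_g = \bigoplus_{p\in G} M_p \otimes A_{p^{-1}g} \to M_g$. By the universal property of the biproduct, each $\rho_g$ is uniquely determined by its restrictions $\rho_{g,h}\colon M_g \otimes A_h \to M_{gh}$ along the coproduct inclusions (reindexing $p = g$, $p^{-1}g = h$, so $g \mapsto gh$), and conversely any such family $\{\rho_{g,h}\}$ assembles into a well-defined $\rho$ precisely because of Hypothesis~\ref{hypotheses}(a) (existence of $G$-indexed biproducts when $G$ is infinite). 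This sets up a bijection between the module-structure data on the $\mathsf{Gr}(\cC)$ side and the $\{\rho_{g,h}\}$ data on the $\mathsf{GrMod}(\cC)_A$ side.

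Next I would check that the two axioms match under this correspondence. The $\mathsf{Gr}(\cC)$-module associativity $\rho(\id_M \bar\otimes\, m) = \rho(\rho \bar\otimes \id_A)$ is an equality of morphisms $M \bar\otimes A \bar\otimes A \to M$; restricting to the summand $M_g \otimes A_h \otimes A_k$ of $(M\bar\otimes A \bar\otimes A)_{ghk}$ and using that $\bar\otimes$ is computed summand-wise (together with additivity of $\otimes$, Hypothesis~\ref{hypotheses}(b), to see that the ``cross terms'' between distinct summands vanish) yields exactly $\rho_{gh,k}(\rho_{g,h}\otimes \id_{A_k}) = \rho_{g,hk}(\id_{M_g}\otimes m_{h,k})$. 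Similarly, restricting the unitality axiom $\rho(\id_M \bar\otimes\, u) = \id_M$ to $M_g \otimes \bar\one_e = M_g \otimes \one$ gives $\rho_{g,e}(\id_{M_g}\otimes u_e) = \id_{M_g}$; one must note that the other components of $\bar\one$ are $0$ so contribute nothing. Thus $(M,\rho)\in \mathsf{Mod}(\mathsf{Gr}(\cC))_A$ corresponds bijectively to $(M,\{\rho_{g,h}\})\in \mathsf{GrMod}(\cC)_A$.

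Finally, I would verify that morphisms correspond: a morphism $\varphi\colon (M,\rho^M)\to (N,\rho^N)$ in $\mathsf{Mod}(\mathsf{Gr}(\cC))_A$ is a tuple $(\varphi_g)$ with $\rho^N(\varphi \bar\otimes \id_A) = \varphi\, \rho^M$, and restricting this equality to the summand $M_g \otimes A_h$ of $(M\bar\otimes A)_{gh}$ gives precisely $\rho^N_{g,h}(\varphi_g \otimes \id_{A_h}) = \varphi_{gh}\,\rho^M_{g,h}$, which is the morphism condition in Definition~\ref{def:gr-mod}(b); this correspondence is visibly functorial (identities to identities, composites to composites, since composition in both categories is computed componentwise in $\cC$). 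Assembling the object and morphism bijections gives the desired isomorphism of categories $\mathsf{Mod}(\mathsf{Gr}(\cC))_A \cong \mathsf{GrMod}(\cC)_A$. The only mildly delicate point — the ``main obstacle'' such as it is — is the bookkeeping that restricting an equality of morphisms out of a biproduct to each summand is equivalent to the original equality, which relies on $\otimes$ preserving biproducts in each variable up to the canonical comparison; this follows from additivity of $\otimes$ (Hypothesis~\ref{hypotheses}(b)) but should be stated carefully, and is exactly the same mechanism already used in the proof of Proposition~\ref{prop:alg-bar}, so I would phrase this proof to parallel that one closely.
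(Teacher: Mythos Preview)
Your proposal is correct and follows essentially the same approach as the paper: the paper simply states that the proof is carried out in the same manner as Proposition~\ref{prop:alg-bar}, with $A$ replaced by $M$, and your write-up spells out precisely this parallel argument (decomposing $\rho$ into components via the biproduct, matching the associativity/unitality axioms summand-by-summand, and handling morphisms). Your explicit remark that the only delicate point is the biproduct/additivity bookkeeping already used in Proposition~\ref{prop:alg-bar} is exactly in line with the paper's treatment.
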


The proof of Proposition~\ref{prop:alg-bar} is provided in Appendix~\ref{sec:prop:alg-bar}. Proposition~\ref{prop:mod-bar} is proven in the same manner, with $A$ replaced by $M$.

\begin{convention} \label{conv:algs} 
Using Propositions~\ref{prop:alg-bar} and~\ref{prop:mod-bar} above,  we freely convert between algebras in~$\mathsf{Gr}(\cC)$ and graded algebras in $\cC$ from now on. We also proceed similarly for modules.  This will be done often without mention, as depicted below.
\begin{align*}
\mathsf{Alg}(\mathsf{Gr}(\cC)) \ni \; (A, m, u) \; &\leftrightsquigarrow \;  (A,\; \{m_{g,h}\}_{g,h \in G}, \; u_e) \; \in \mathsf{GrAlg}(\cC).\\    
\hspace{.3in} \mathsf{Mod}(\mathsf{Gr}(\cC))_A \ni \; (M, \rho) \; &\leftrightsquigarrow \;  (M,\; \{\rho_{g,h}\}_{g,h\in G}) \; \in \mathsf{GrMod}(\cC)_A.
\end{align*}
\end{convention}

%%%%%%%%%%%%%%%%%%%%%%%%%%
%%%%%%%%%%%%%%%%%%%%%%%%%%
%%%%%%%%%%%%%%%%%%%%%%%%%%

\section{Graded closed monoidal categories and enrichment}
\label{sec:graded-closed-enrich}

Here, we provide background information and preliminary results on closed monoidal categories and enrichment, especially in the graded setting. We recall enriched categories in Section~\ref{sec:enrich}, and closed monoidal categories in Section~\ref{sec:prelim-closed}. Then, we provide results on enriched categories of modules in the monoidal setting in Section~\ref{sec:enriched-mod}. Next, endomorphism algebras in categories of modules in the monoidal setting are studied in Section~\ref{sec:endomalg}. Then, we introduce closure in the graded monoidal setting in Section~\ref{sec:graded-closed}, and discuss graded monoidal categories with shifts in Section~\ref{sec:graded-shifts}. Finally, Zhang-Morita equivalence is introduced in Section~\ref{sec:ZM-equiv}.

\subsection{Enriched categories} \label{sec:enrich}
The material here is borrowed from \cite[Sections~1.2, 1.3, and~1.11]{Kelly2005}. Let $(\cV,\otimes^\cV,\one^\cV)$ be a (strict) monoidal category. Consider the following notion.

\begin{definition} 
\label{def:enr-cat}
A {\it $\cV$-category } $\cC$ (or a {\it category $\cC$ enriched over $\cV$}) consists of the following data:
\begin{enumerate}[(a)]
\item A class of objects of $X,Y,\dots\in\cC$,

\skp
        
\item A {\it Hom-object} $\cC(X,Y):=\cC^\cV(X,Y)$ in $\cV$, for each pair of objects $X,Y\in\cC$,

\skp

\item An identity morphism $\kappa^\cV_X:\one^\cV\rightarrow\cC(X,X)$ in $\cV$, for each object $X\in\cC$,
        
\skp
        
\item A composition morphism $\gamma^\cV_{X,Y,Z}: \cC(Y,Z)\otimes^\cV \cC(X,Y) \rightarrow \cC(X,Z)$ in $\cV$, for each triple of objects $X,Y,Z \in\cC$,
\skp
\end{enumerate}
satisfying the following associativity and unitality constraints:
\begin{itemize}
\skp
\item $\gamma^\cV_{W,X,Z}(\gamma^\cV_{X,Y,Z}\otimes \id_{\cC(W,X)}) \; = \;
\gamma^\cV_{W,Y,Z}(\id_{\cC(Y,Z)}\otimes\gamma^\cV_{W,X,Y})$ for all $W,X,Y,Z\in\cC$,
                 
\skp
        
\item $\gamma^\cV_{X,Y,Y}(\kappa^\cV_Y\otimes\id_{\cC(X,Y)}) \; = \;
\id_{\cC(X,Y)}$  for all $X,Y\in\cC$, and
         
\skp

\item $\gamma^\cV_{X,X,Y}(\id_{\cC(X,Y)}\otimes\kappa^\cV_X) \;  = \;
\id_{\cC(X,Y)}$  for all $X,Y\in\cC$.
\end{itemize}
\end{definition}

\smallskip

Note that  a $\cV$-category $\cC$ is not necessarily an ordinary category, i.e., we do not necessarily have a collection of morphisms $\Hom_\cC(X,Y)$ between objects $X,Y \in \cC$. But consider the notion below.
%We initially only have {\it Hom-objects} $\cC(X,Y) \in \cV$. 

\begin{definition}[$\cC_0$] 
%\cite[Section~1.3]{Kelly2005}
Let $\cC$ be a $\cV$-category. The {\it underlying category}  of $\cC$ is defined as the category $\cC_0$ defined by the following data. 

\begin{enumerate}[(a)]
\item The same class of objects  $X,Y, \dots$ as $\cC$.

\skp

\item A morphism $f:X\rightarrow Y$ in $\cC_0$, for each morphism $\one^\cV\rightarrow \cC(X,Y)$ in $\cV$.

\skp
        
\item The identity morphism of $X$ corresponds to $\kappa^\cV_X: \one^\cV\rightarrow \cC(X,X)$.

\skp

\item The composition of morphisms $f:X\rightarrow Y$ and $g:Y\rightarrow Z$ corresponds to:

\vspace{-.2in}

\[ 
\xymatrix{
\one^\cV 
\ar[r]^(.4){\cong}
&\one^\cV\otimes\one^\cV
\ar[r]^(.38){g\otimes f}
&\cC(Y,Z)\otimes \cC(X,Y)
\ar[r]^(.62){\gamma_{X,Y,Z}^\cV}
&\cC(X,Z).
}
\]
    \end{enumerate}
\end{definition}

Next, we discuss how to move between $\cV$-enriched categories.

\begin{definition}[$F, \{F_{X,Y}\}$]\label{def:enriched-functor}
Let $\cC$ and $\cD$ be $\cV$-categories. A {\it $\cV$-functor} $F:\cC\rightarrow\cD$ from $\cC$ to $\cD$ consists of:
\begin{enumerate}[(a)]
    \item for each object $X\in\cC$, an object $F(X)\in\cD$,

    \skp
    
    \item for all $X,Y\in\cC$, a morphism $F_{X,Y}:\cC(X,Y)\rightarrow\cD(F(X),F(Y))$ in $\cV$,
    \skp
\end{enumerate}
that respects the identity and composition of the enrichments, i.e.
\begin{itemize}
    \item $F_{X,Z}\circ\gamma_{X,Y,Z}=\gamma_{F(X),F(Y),F(Z)}\circ(F_{Y,Z}\otimes F_{X,Y})$, for all $X,Y,Z\in\cC$, 

    \skp
    
    \item $\kappa_{F(X)}=F_{X,X}\circ\kappa_X$, for all $X\in\cC$.
\end{itemize} 
\end{definition}

With this, we can discuss when two enriched categories are considered to be the same.

\begin{definition}[$\cC \simeq^{\cV} \cD$] \label{def:V-equiv}
Let $\cC$ and $\cD$ be $\cV$-categories. We say that $\cC$ and $\cD$ are {\it $\cV$-equivalent} if there exists a $\cV$-functor $F: \cC \to \cD$ that satisfies the conditions below.
\begin{enumerate}[(a)]
\item $F$ is {\it $\cV$-fully faithful}, that is, $F_{X,Y}$ is an isomorphism in $\cV$, for all $X,Y \in \cC$.
\skp
\item $F$ is {\it $\cV$-essentially surjective}, that is, for each object $Z \in \cD_0$, there exists an object $X \in \cC_0$ such that $F(X) \cong Z$ in $\cD_0$.
\end{enumerate}
In this case, we write $\cC \simeq^{\cV} \cD$.
\end{definition}

The next straight-forward lemma shows how monoidal functors transport enrichments.

\begin{lemma}\label{lem:tranport-enrichment} 
    Let $\cV$ and $\cW$ be monoidal categories, $(F,F^{(2)}, F^{(0)}):\cV\rightarrow\cW$ be a monoidal functor, and suppose that $\cC$ be a $\cV$-category. Then, $\cC$ admits the structure of a  $\cW$-category with:
    \begin{enumerate}[\upshape (a)]
        \item The same objects $X,Y,\dots\in\cC$;
        \skp
        \item For each $X,Y\in\cC$, the Hom-object $\cC^{\cW}(X,Y):=F(\cC^{\cV}(X,Y))$;
         \skp
        \item For all $X,Y,Z\in\cC$, the composition morphism $\gamma^\cW_{X,Y,Z}:=F(\gamma^\cV_{X,Y,Z}) \circ F^{(2)}_{\cC^{\cV}(Y,Z),\cC^{\cV}(X,Y)}$;
         \skp
        \item For each $X\in\cC$, the identity morphisms $\kappa^\cW_X:=F(\kappa^\cV_X) \circ F^{(0)}$. \qed
    \end{enumerate}
\end{lemma}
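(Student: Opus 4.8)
The plan is to verify directly that the data listed in (a)--(d) satisfies the three axioms of a $\cW$-category, namely the associativity of composition and the two unitality constraints, by unwinding the definitions of $\gamma^\cW$ and $\kappa^\cW$ and repeatedly invoking the associativity and unitality coherence conditions of the monoidal functor $F$ (i.e., the axioms satisfied by $F^{(2)}$ and $F^{(0)}$), together with functoriality of $F$, naturality of $F^{(2)}$, and the fact that $\cC$ is already a $\cV$-category. No clever idea is needed; the content is bookkeeping of composites in $\cW$.

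First I would treat the associativity axiom. Writing out $\gamma^\cW_{W,X,Z}\bigl(\gamma^\cW_{X,Y,Z}\otimes^\cW \id\bigr)$, one expands each $\gamma^\cW$ into $F(\gamma^\cV)\circ F^{(2)}$. The key manipulations are: (i) use that $\otimes^\cW$ is a bifunctor to separate $F(\gamma^\cV_{X,Y,Z})\otimes^\cW \id_{F(\cC^\cV(W,X))}$ from the $F^{(2)}$ factors; (ii) use naturality of $F^{(2)}$ with respect to the morphism $\gamma^\cV_{X,Y,Z}$ in the first slot to slide it past $F^{(2)}$; (iii) use the associativity coherence axiom for the monoidal functor $F$ to rearrange the nested $F^{(2)}$'s; and (iv) use functoriality of $F$ to combine $F(\gamma^\cV)\circ F(\gamma^\cV)$ into $F$ applied to a composite of $\gamma^\cV$'s, at which point the associativity axiom for the $\cV$-category $\cC$ applies. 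Doing the same expansion on the other side of the desired equation and matching terms yields the identity. Since $\cC$ is strict, the unit isomorphisms $l,r$ are identities and do not clutter the computation.

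Next I would handle the two unitality axioms, which are entirely parallel to each other, so I would do one in detail and remark that the other is symmetric. Expanding $\gamma^\cW_{X,Y,Y}\bigl(\kappa^\cW_Y\otimes^\cW\id_{\cC^\cW(X,Y)}\bigr)$, substitute $\kappa^\cW_Y = F(\kappa^\cV_Y)\circ F^{(0)}$ and $\gamma^\cW_{X,Y,Y}=F(\gamma^\cV_{X,Y,Y})\circ F^{(2)}$; then use bifunctoriality of $\otimes^\cW$ to factor out $F(\kappa^\cV_Y)\otimes^\cW\id$, naturality of $F^{(2)}$ in the first variable along $\kappa^\cV_Y$, the unitality coherence axiom of the monoidal functor $F$ (relating $F^{(2)}$ composed with $F^{(0)}\otimes\id$ to a unit isomorphism, identity here by strictness), functoriality of $F$, and finally the unitality axiom $\gamma^\cV_{X,Y,Y}(\kappa^\cV_Y\otimes\id)=\id_{\cC^\cV(X,Y)}$ of $\cC$ to conclude that the composite equals $F(\id_{\cC^\cV(X,Y)}) = \id_{\cC^\cW(X,Y)}$.

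The main obstacle, such as it is, is purely organizational: keeping the long string diagrams / composite chains straight and being careful about which slot of $F^{(2)}$ each morphism enters when invoking naturality, and about the precise form of the monoidal-functor coherence axioms (which were only referred to as ``associativity and unitality constraints'' in the definition of monoidal functor). Given that the statement is flagged as straight-forward and is closed with \verb|\qed| in the excerpt, I would keep the write-up brief: state that all three axioms follow from the monoidal-functor axioms for $F$ together with the $\cV$-category axioms for $\cC$, perhaps display the one nontrivial chain of equalities for associativity, and leave the unitality checks to the reader or present them in a compressed form. No genuine difficulty is expected.
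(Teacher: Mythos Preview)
Your approach is correct and matches the paper's treatment: the paper simply declares the lemma ``straight-forward'' and marks it with \verb|\qed| without giving any argument, so your direct verification via the monoidal-functor coherence axioms and the $\cV$-category axioms is exactly what is implicitly being invoked.
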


% SEE NOTE 79, PAGE 2

\subsection{Closed monoidal categories}
\label{sec:prelim-closed} 
We refer the reader to \cite[Section~1.5]{Kelly2005} for further details.

\begin{definition}[\textnormal{$[Y,-]$}]
A monoidal category $\cC=(\cC,\otimes,\one)$ is {\it right-closed } %\red{[CW: I looked into this again and many references use ``right-closed" for this notion include Baez, Lurie, Wikipedia, and by Kelly (see comments). After looking at several references, it seems like 70\% call this right-closed. (It is annoying how this isn't uniform.) We will change to ``right-closed"]}
%http://www.tac.mta.ca/tac/volumes/9/n4/n4.pdf
if for every $Y\in\cC$, the functor $(-\otimes Y):\cC\rightarrow\cC$ admits a right adjoint $[Y,-]:\cC\rightarrow\cC$. For an object $Z \in \cC$, we refer to $[Y,Z] \in \cC$ as the {\it internal Hom of $Y$ and $Z$}.
\end{definition}

Next, we will recall how right-closed monoidal categories are always enriched over themselves. To do so, we borrow the following notation from \cite[Section~4.1]{Riehl2017}.

\begin{notation}[$\nu^\#$, $\psi^\flat$]\label{not:transpose}
If $\cC,\cD$ are arbitrary categories, then any adjunction, \[(F_1: \cC \to \cD) \dashv (F_2: \cD \to \cC),\] gives rise to natural transformations $\eta:\id_\cC \Rightarrow F_2F_1$ and $\epsilon:F_1F_2\Rightarrow\id_\cD$ called the {\it unit} and {\it counit} of the adjunction, respectively, subject to triangle compatibility axioms. The unit and counit induce natural bijections of morphisms for all $C\in\cC$ and $D\in\cD$:
\begin{align*}
\Hom_{\cD}(F_1(C),D) & \; \cong \;  \Hom_{\cC}(C,F_2(D)) \\
(\psi:F_1(C)\to D) &\; \mapsto \;(\psi^\flat:C \stackrel{\eta_C}{\longrightarrow}F_2F_1(C)\stackrel{F_2(\psi)}{\longrightarrow} F_2(D))
\\[.05pc]
(\nu^\#:F_1(C)\stackrel{F_1(\nu)}{\longrightarrow}F_1F_2(D)\stackrel{\epsilon_D}{\longrightarrow}D) &\; \mapsfrom \;(\nu:C\rightarrow F_2(D)).
\end{align*}
We call $\nu^\#$ the {\it left transpose} of $\nu$, and $\psi^\flat$ the {\it right transpose} of $\psi$.
\end{notation}

Using the triangle axioms from the adjunction $F_1 \dashv F_2$, it is straight-forward to check that 
\begin{equation} \label{eq:sharp-flat}
(\nu^\#)^\flat = \nu
\quad \quad \text{and} \quad \quad 
(\psi^\flat)^\# = \psi.
\end{equation}

\begin{example} \label{ex:right-closed}
 If $(\cC,\otimes,\one)$ is a right-closed monoidal category with  adjunction
$(-\otimes Y) \dashv [Y,-]$ for $Y \in \cC$, then  we obtain a bijection,
\begin{equation} \label{eq:Crtclosed}
    \Hom_{\cC}(X\otimes Y, Z)  \; \cong \; \Hom_{\cC}(X, [Y,Z] ),
\end{equation}
which is natural in each variable  $X,Z \in \cC$. The components for the unit and counit of this adjunction are  given by 
\[
(\id_{X \otimes Y})^\flat =: \eta^Y_X: X \to [Y, X \otimes Y]
\quad \quad \text{and} \quad \quad
(\id_{[Y,Z]})^\# =:  \epsilon^Y_Z:[Y,Z] \otimes Y \to Z.
\] 
So, the left transpose of $\nu:X\rightarrow [Y,Z]$, and the right transpose of $\psi:X \otimes Y \rightarrow Z$ in $\cC$, are given respectively by:
\begin{equation} \label{eq:right-closed}
\nu^\#: X\otimes Y \stackrel{\nu\otimes \id_Y}{-\kern-5pt\longrightarrow} [Y,Z]\otimes Y \stackrel{\epsilon^Y_Z}{\longrightarrow} Z
\quad \quad \text{and} \quad \quad
\psi^\flat: X \stackrel{\eta^Y_X}{\longrightarrow} [Y, X \otimes Y] \stackrel{[Y,\psi]}{-\kern-5pt\longrightarrow} [Y,Z].
\end{equation}
As a consequence of (\ref{eq:Crtclosed}), the underlying category $\cC_0$ of $\cC$ simply recovers the original category, due to the correspondences:
\[\Hom_{\cC_0}(X,Y)\;  \cong \; \Hom_{\cC}(\one, [X,Y]) \;  \cong \; 
 \Hom_{\cC}(\one \otimes X,Y)\;  \cong \; 
 \Hom_{\cC}(X,Y).
\]   
\end{example}

The preliminary result below will be of use later.

\begin{proposition}\label{prop:self-enriched}
Every right-closed monoidal category $(\cC, \otimes, \one)$ is enriched over itself by taking: 
\begin{enumerate}[\upshape (a)]
\item The class of objects $X, Y,\dots \in \cC$; 

\skp

\item $\cC(X,Y):= [X,Y]$;

\skp

\vspace{.02in}

\item $\kappa^\cC_X:\one\rightarrow [X,X]$ is the right transpose of  the left unitor $\ell_X:\one\otimes X\stackrel{\sim}{\rightarrow} X$ of $X$ in $\cC$; 

\skp

\item $\gamma^\cC_{X,Y,Z}: [Y,Z]\otimes[X,Y]\rightarrow [X,Z]$ is the right transpose of the morphism below
\[[Y,Z]\otimes[X,Y]\otimes X \stackrel{\id \otimes\epsilon^X_Y}{-\kern-5pt\longrightarrow}[Y,Z]\otimes Y \stackrel{\epsilon^Y_Z}{\longrightarrow} Z.\] 

\vspace{-.25in}

\qed
\end{enumerate} 
\end{proposition}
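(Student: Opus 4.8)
The plan is to verify the four axioms of a $\cV$-enriched category (for $\cV = \cC$) directly, using the defining adjunction $(-\otimes Y) \dashv [Y,-]$ and the transpose calculus from Notation~\ref{not:transpose} and Example~\ref{ex:right-closed}. The data in (a)--(d) are given, so the content is entirely in checking the associativity constraint $\gamma^\cC_{W,X,Z}(\gamma^\cC_{X,Y,Z}\otimes\id_{[W,X]}) = \gamma^\cC_{W,Y,Z}(\id_{[Y,Z]}\otimes\gamma^\cC_{W,X,Y})$ and the two unit constraints. The central technique is: to show two morphisms into an internal Hom $[A,B]$ agree, it suffices (by the bijection~(\ref{eq:Crtclosed})) to show their left transposes into $B$ agree, i.e.\ to compose each with $\id\otimes\,(\text{the relevant } A)$ and then with the counit $\epsilon^A_B$. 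So for each axiom I would transpose the identity across the appropriate adjunction, turning an equality of maps into internal Homs into an equality of honest maps into $Z$ (resp.\ into $X,Y$), which is then a routine diagram chase with the counit maps $\epsilon^X_Y$, $\epsilon^Y_Z$, etc.

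For the unit axioms: $\gamma^\cC_{X,Y,Y}(\kappa^\cC_Y\otimes\id_{[X,Y]})$ is a map $[X,Y] \cong \one\otimes[X,Y] \to [X,Y]$, and to show it equals $\id_{[X,Y]}$ I would take its left transpose along $(-\otimes X)\dashv[X,-]$, i.e.\ post-compose with $\id\otimes X$ and then $\epsilon^X_Y$, and compute using the definition of $\gamma^\cC$ as a right transpose (so $(\gamma^\cC_{X,Y,Y})^\# = \epsilon^Y_Y\circ(\id\otimes\epsilon^X_Y)$) together with the definition of $\kappa^\cC_Y$ as the right transpose of $\ell_Y$ (so $(\kappa^\cC_Y)^\# = \ell_Y$, hence $\epsilon^Y_Y\circ(\kappa^\cC_Y\otimes\id_Y) = \ell_Y$); after strictness this collapses to $\epsilon^X_Y$, which is exactly the left transpose of $\id_{[X,Y]}$, so we conclude by~(\ref{eq:sharp-flat}). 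The other unit axiom $\gamma^\cC_{X,X,Y}(\id_{[X,Y]}\otimes\kappa^\cC_X) = \id_{[X,Y]}$ is similar but uses naturality of $\epsilon^X$ (or equivalently the triangle identity $\epsilon^X_{?}\circ(\eta^X_?\otimes\id_X)=\id$) to absorb $\kappa^\cC_X$.

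For associativity, I would transpose both sides along $(-\otimes W)\dashv[W,-]$, reducing to an equality of maps $[Y,Z]\otimes[X,Y]\otimes[W,X]\otimes W \to Z$. Using that $\gamma^\cC$ is a right transpose and naturality of the counit, the left side unwinds to $\epsilon^Y_Z\circ(\id\otimes\epsilon^X_Y)\circ(\id\otimes\id\otimes\epsilon^W_X)$ and the right side unwinds to the same composite with the two inner applications of $\epsilon$ in the opposite order; these agree by interchange (functoriality of $\otimes$ in two arguments, i.e.\ $(\id\otimes\epsilon^X_Y)\circ(\id\otimes\epsilon^W_X\otimes\id) = (\id\otimes\epsilon^W_X)$-type middle-four identities) after being careful about where each $\epsilon$ acts. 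The main obstacle, and the place demanding the most care, is bookkeeping: keeping straight which adjunction's unit/counit is being used at each transposition (there are four different objects $W,X,Y$ appearing as the "$Y$" in $(-\otimes Y)\dashv[Y,-]$), and correctly applying naturality of $\epsilon^{(-)}$ when a counit is precomposed with a map built from another counit. None of the steps is deep, but the indices proliferate, so I would organize the associativity check as a single commuting diagram of transposed maps rather than a linear string of equalities. Given the length, I would likely defer the full diagram chase to the appendix in the style the authors use elsewhere.

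\begin{proof}
We verify the axioms of Definition of a $\cV$-category with $\cV = \cC$. The data (a)--(d) are as stated; it remains to check associativity and the two unit constraints. Throughout we use the adjunction $(-\otimes Y)\dashv[Y,-]$ with counit $\epsilon^Y_{-}$ and unit $\eta^Y_{-}$ as in Example~\ref{ex:right-closed}, the transpose identities~(\ref{eq:sharp-flat}), and strictness (Hypothesis~\ref{hyp:strict}). Recall that by definition
\[
(\kappa^\cC_X)^\# = \ell_X = \id_X,
\qquad
(\gamma^\cC_{X,Y,Z})^\# = \epsilon^Y_Z\circ(\id_{[Y,Z]}\otimes\epsilon^X_Y),
\]
where the first transpose is taken along $(-\otimes X)\dashv[X,-]$ and the second along $(-\otimes X)\dashv[X,-]$ as well.

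\textit{Unit axiom} $\gamma^\cC_{X,Y,Y}(\kappa^\cC_Y\otimes\id_{[X,Y]}) = \id_{[X,Y]}$. Both sides are morphisms $[X,Y]\to[X,Y]$ (identifying $\one\otimes[X,Y]=[X,Y]$), so by~(\ref{eq:Crtclosed}) it suffices to check equality of their left transposes along $(-\otimes X)\dashv[X,-]$, i.e.\ after post-composing with $\id_{?}\otimes\,\id_X$ and $\epsilon^X_{?}$. The right transpose formula gives
\[
\bigl(\gamma^\cC_{X,Y,Y}(\kappa^\cC_Y\otimes\id_{[X,Y]})\bigr)^\#
= \epsilon^Y_Y\circ(\id_{[Y,Y]}\otimes\epsilon^X_Y)\circ(\kappa^\cC_Y\otimes\id_{[X,Y]}\otimes\id_X).
\]
Since $(\kappa^\cC_Y)^\# = \ell_Y$, naturality of the counit pairing yields $\epsilon^Y_Y\circ(\kappa^\cC_Y\otimes\id_Y) = \ell_Y = \id_Y$; absorbing this into the composite leaves exactly $\epsilon^X_Y = (\id_{[X,Y]})^\#$. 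By~(\ref{eq:sharp-flat}) the two maps coincide.

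\textit{Unit axiom} $\gamma^\cC_{X,X,Y}(\id_{[X,Y]}\otimes\kappa^\cC_X) = \id_{[X,Y]}$. Transposing along $(-\otimes X)\dashv[X,-]$ as above, the left side becomes
\[
\epsilon^X_Y\circ(\id_{[X,Y]}\otimes\epsilon^X_X)\circ(\id_{[X,Y]}\otimes\kappa^\cC_X\otimes\id_X).
\]
Now $\epsilon^X_X\circ(\kappa^\cC_X\otimes\id_X) = \ell_X = \id_X$ since $(\kappa^\cC_X)^\# = \ell_X$, so the composite collapses to $\epsilon^X_Y = (\id_{[X,Y]})^\#$, and again~(\ref{eq:sharp-flat}) finishes the argument.

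\textit{Associativity.} Both sides are morphisms $[Y,Z]\otimes[X,Y]\otimes[W,X]\to[W,Z]$, so by~(\ref{eq:Crtclosed}) it suffices to compare their left transposes along $(-\otimes W)\dashv[W,-]$, i.e.\ morphisms $[Y,Z]\otimes[X,Y]\otimes[W,X]\otimes W\to Z$. Writing $(\gamma^\cC_{A,B,C})^\# = \epsilon^B_C\circ(\id\otimes\epsilon^A_B)$ and repeatedly using naturality of the counit $\epsilon^{(-)}$, the left-hand side $\bigl(\gamma^\cC_{W,X,Z}(\gamma^\cC_{X,Y,Z}\otimes\id_{[W,X]})\bigr)^\#$ unwinds to
\[
\epsilon^Y_Z\circ(\id_{[Y,Z]}\otimes\epsilon^X_Y)\circ(\id_{[Y,Z]}\otimes\id_{[X,Y]}\otimes\epsilon^W_X),
\]
and likewise the right-hand side $\bigl(\gamma^\cC_{W,Y,Z}(\id_{[Y,Z]}\otimes\gamma^\cC_{W,X,Y})\bigr)^\#$ unwinds to
\[
\epsilon^Y_Z\circ(\id_{[Y,Z]}\otimes\epsilon^X_Y)\circ(\id_{[Y,Z]}\otimes\id_{[X,Y]}\otimes\epsilon^W_X),
\]
where in the second computation one uses the interchange law for $\otimes$ (functoriality in both arguments) to move the innermost $\epsilon^W_X$ past $\gamma^\cC_{W,X,Y}$ via naturality of $\epsilon^X$. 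The two composites are identical, so by~(\ref{eq:Crtclosed}) and~(\ref{eq:sharp-flat}) the original morphisms into $[W,Z]$ agree. This establishes the associativity constraint and completes the proof.
\end{proof}
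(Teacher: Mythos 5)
The paper provides no proof of this proposition: it is stated with a \qed in the proposition environment and is treated as a standard fact (citing \cite[Section~1.5]{Kelly2005}). Your proposal therefore fills in details the paper omits, and the verification you give is correct in substance: the technique of transposing both sides of each axiom along $(-\otimes Y)\dashv[Y,-]$, unwinding $(\gamma^\cC)^\#$ and $(\kappa^\cC)^\#$ via their definitions as right transposes, and then comparing the resulting ``honest'' morphisms into $Z$ (resp.\ $Y$, $X$) using interchange and the identity $(\nu^\#)^\flat = \nu$ from~(\ref{eq:sharp-flat}) is exactly the standard argument, and each of your three computations does reduce to the asserted equality. One small bookkeeping slip in the associativity paragraph: you say the \emph{right-hand} side is the one requiring interchange to move $\epsilon^W_X$ past $\gamma^\cC$, but in fact the right-hand side reduces by pure functoriality-of-$\otimes$ bracket reassociation, whereas it is the \emph{left-hand} side that needs the interchange step $(\id_{[X,Z]}\otimes\epsilon^W_X)\circ(\gamma^\cC_{X,Y,Z}\otimes\id\otimes\id) = (\gamma^\cC_{X,Y,Z}\otimes\id_X)\circ(\id\otimes\id\otimes\epsilon^W_X)$ to move $\epsilon^W_X$ past $\gamma^\cC_{X,Y,Z}$. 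This is only an expository transposition of labels and does not affect the validity of the argument.
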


\subsection{Enriched categories of modules in monoidal categories} \label{sec:enriched-mod} Now we will show that a category of modules in a closed monoidal category $\cC$ is enriched over $\cC$. First, let us consider the following preliminary result on equalizing morphisms in ordinary categories. 

\begin{lemma}\label{lem:equalizers}
Let $F_1\dashv F_2$ be adjoint functors. A morphism $\nu:X\rightarrow Y$ equalizes morphisms $\nu', \nu'': Y\rightarrow F_2(Z)$ if and only if $F_1(\nu):F_1(X)\rightarrow F_1(Y)$ equalizes $(\nu')^\#,(\nu'')^\#: F_1(Y)\rightarrow Z$. 
\end{lemma}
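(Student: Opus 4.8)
The statement is a purely formal consequence of the adjunction $F_1 \dashv F_2$ together with the naturality of the unit $\eta$ and counit $\epsilon$, so the plan is to use the bijection on Hom-sets from Notation~\ref{not:transpose} and track how equalizing identities transform under transpose. First I would recall that the transpose bijection $\Hom_{\cC}(X, F_2(Z)) \cong \Hom_{\cD}(F_1(X), Z)$ is natural in $X$, and that by \eqref{eq:sharp-flat} the assignments $(-)^\#$ and $(-)^\flat$ are mutually inverse. The key observation is that for any morphism $\nu: X \to Y$ in $\cC$ and any $\mu: Y \to F_2(Z)$, one has the identity $(\mu \circ \nu)^\# = \mu^\# \circ F_1(\nu)$ in $\cD$; this follows by unwinding the definition $(\mu\nu)^\# = \epsilon_Z \circ F_1(\mu\nu) = \epsilon_Z \circ F_1(\mu) \circ F_1(\nu) = \mu^\# \circ F_1(\nu)$, using functoriality of $F_1$.

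Granting that identity, the proof is a short chain of equivalences. Suppose $\nu$ equalizes $\nu', \nu'': Y \to F_2(Z)$, i.e. $\nu' \circ \nu = \nu'' \circ \nu$. Applying $(-)^\#$ to both sides and using the identity above gives $(\nu')^\# \circ F_1(\nu) = (\nu'')^\# \circ F_1(\nu)$, which says precisely that $F_1(\nu)$ equalizes $(\nu')^\#$ and $(\nu'')^\#$. Conversely, if $F_1(\nu)$ equalizes $(\nu')^\#$ and $(\nu'')^\#$, then $(\nu')^\# \circ F_1(\nu) = (\nu'')^\# \circ F_1(\nu)$; applying $(-)^\flat$ and using that it is inverse to $(-)^\#$ — together with the dual identity $(\psi \circ F_1(\nu))^\flat = \psi^\flat \circ \nu$, which follows symmetrically from naturality of $\eta$ — recovers $\nu' \circ \nu = \nu'' \circ \nu$. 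Alternatively, one avoids proving the dual identity by simply noting that $F_1(\nu)$ equalizing $(\nu')^\#,(\nu'')^\#$ means $((\nu')^\#)^\flat$ and $((\nu'')^\#)^\flat$ are equalized after precomposition in the appropriate sense, then invoking $((\nu')^\#)^\flat = \nu'$.

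I do not anticipate a genuine obstacle here: the only thing to be careful about is bookkeeping — making sure the transpose identity $(\mu \circ \nu)^\# = \mu^\# \circ F_1(\nu)$ is stated and verified cleanly from the triangle axioms (or equivalently from naturality of $\epsilon$), since everything else is a one-line substitution. If one wanted to be maximally economical, the whole lemma can be phrased as: the bijection $\Hom_{\cC}(Y, F_2(Z)) \cong \Hom_{\cD}(F_1(Y), Z)$ is natural in $Y$, hence carries the pair $(\nu', \nu'')$ to $((\nu')^\#, (\nu'')^\#)$ compatibly with precomposition by $\nu$ (resp. $F_1(\nu)$), and a morphism equalizes a pair iff its image under a Hom-set bijection natural in the relevant variable equalizes the image pair. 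I would present the explicit two-direction argument above rather than this abstract phrasing, as it is more transparent for the reader.
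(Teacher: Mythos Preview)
Your proposal is correct and follows essentially the same approach as the paper: both arguments hinge on the bijectivity of $(-)^\#$ together with the identity $(\mu\circ\nu)^\# = \mu^\# \circ F_1(\nu)$, obtained by unfolding the definition via $\epsilon_Z$ and functoriality of $F_1$. The paper's version is simply a more compressed rendering of the same chain of equivalences.
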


\begin{proof}
Notice that $\nu' \nu=\nu'' \nu$ if and only if $(\nu' \nu)^\#=(\nu'' \nu)^\#$. The latter equality can be expressed  as $\epsilon_Z  F_1(\nu') F_1(\nu)=\epsilon_Z F_1(\nu'')   F_1(\nu)$.  Therefore, $(\nu')^\# F_1(\nu)=(\nu'')^\# F_1(\nu)$.
\end{proof}

%Now we discuss when a monoidal category of modules in a category $\cC$ is a $\cC$-category.

Now we present the main result of the section.

\begin{theorem} \label{thm:closed}
Let $(\cC, \otimes, \one)$ be a right-closed monoidal category and take $(A,m,u)\in\Alg(\cC)$. If $\cC$ has equalizers, then $\mathsf{Mod}(\cC)_A$ is enriched over $\cC$.
\end{theorem}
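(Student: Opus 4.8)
The plan is to construct the Hom-objects $\mathsf{Mod}(\cC)_A(M,N)$ as equalizers in $\cC$, mimicking the classical construction of module homomorphisms as a kernel. Concretely, for right $A$-modules $(M,\rho^M)$ and $(N,\rho^N)$, I would define $\underline{\Hom}_A(M,N) := [M,N]_A$ to be the equalizer of two morphisms $[M,N] \rightrightarrows [M\otimes A, N]$ in $\cC$. The first morphism is the right transpose of $[M,N]\otimes M \otimes A \xrightarrow{\epsilon^M_N \otimes \id_A} N \otimes A \xrightarrow{\rho^N} N$, encoding ``post-compose with the $N$-action''. The second is the right transpose of $[M,N]\otimes M\otimes A \xrightarrow{\id\otimes\rho^M}[M,N]\otimes M \xrightarrow{\epsilon^M_N} N$, encoding ``pre-compose with the $M$-action''. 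The equalizer exists by hypothesis, and it sits inside $[M,N]$ via a monomorphism $\iota_{M,N}$. The underlying-category computation $\Hom_{\cC}(\one, [M,N]_A) \cong \Hom_{\mathsf{Mod}(\cC)_A}(M,N)$ should follow by combining Lemma~\ref{lem:equalizers} (applied to the adjunction $-\otimes M \dashv [M,-]$, noting $\one \otimes M = M$) with Example~\ref{ex:right-closed}: a morphism $\one \to [M,N]$ equalizes the two maps iff its transpose $M \to N$ equalizes the corresponding pair $M\otimes A \rightrightarrows N$, which is precisely the module-homomorphism condition $\rho^N(\varphi\otimes\id_A) = \varphi\,\rho^M$.

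Next I would produce the identity and composition morphisms. For $\kappa^A_M: \one \to [M,N]_A$, I would check that the self-enriched identity $\kappa^\cC_M: \one \to [M,M]$ from Proposition~\ref{prop:self-enriched} equalizes the relevant pair (its transpose is $\id_M$, which is trivially an $A$-module map), hence factors uniquely through $\iota_{M,M}$. For composition $\gamma^A_{M,N,P}: [N,P]_A \otimes [M,N]_A \to [M,P]_A$, I would precompose the self-enriched composition $\gamma^\cC_{M,N,P}: [N,P]\otimes[M,N]\to[M,P]$ of Proposition~\ref{prop:self-enriched} with $\iota_{N,P}\otimes\iota_{M,N}$, verify this composite equalizes the defining pair for $[M,P]_A$, and factor it through $\iota_{M,P}$; here I would use that $\otimes$ is additive (Hypothesis~\ref{hypotheses}) only insofar as needed, and mostly the naturality of transposition together with the triangle identities $(\nu^\#)^\flat = \nu$, $(\psi^\flat)^\# = \psi$ from \eqref{eq:sharp-flat}. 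The associativity and unitality axioms for $(\kappa^A, \gamma^A)$ then follow from the corresponding axioms for $(\kappa^\cC, \gamma^\cC)$ already established in Proposition~\ref{prop:self-enriched}, by uniqueness of factorizations through the monomorphisms $\iota$.

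The main obstacle I anticipate is the verification that the self-enriched composition $\gamma^\cC_{M,N,P}$ actually restricts to the equalizer subobjects — i.e., that $\gamma^\cC_{M,N,P}\circ(\iota_{N,P}\otimes\iota_{M,N})$ equalizes the pair $[M,P]\rightrightarrows[M\otimes A, P]$. This is a genuine diagram chase: one must transpose everything down to morphisms out of $[N,P]_A\otimes[M,N]_A\otimes M\otimes A$ into $P$, then use that the first factor lands in the equalizer defining $[N,P]_A$ (so it commutes with the $N$- and $P$-actions appropriately) and the second factor lands in the equalizer defining $[M,N]_A$ (so it commutes with the $M$- and $N$-actions), and assemble these to get equality after composing with $\rho^P$ versus after composing with $\id\otimes\rho^M$. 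Managing the transposes cleanly — repeatedly invoking naturality of $\psi \mapsto \psi^\flat$ and the explicit formulas \eqref{eq:right-closed} for $\eta^Y$ and $\epsilon^Y$ — is where the bookkeeping is heaviest, which is presumably why the paper defers it to the appendix. Everything else reduces to uniqueness of maps into equalizers.
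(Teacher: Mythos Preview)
Your proposal is correct and follows essentially the same approach as the paper: define $[M,N]_A$ as the equalizer of the two transposed maps encoding pre- and post-composition with the actions, factor $\kappa^\cC$ and $\gamma^\cC$ through these equalizers via the universal property, and deduce the enriched-category axioms from those of the self-enrichment using that the equalizer inclusions are monic. The main diagram chase you anticipate (that $\gamma^\cC\circ(\iota\otimes\iota)$ equalizes the defining pair) is exactly what the paper verifies in the appendix, and your outline of how it goes---transpose, then use the equalizing property of each factor in turn---is on target.
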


\begin{proof}
Following Definition~\ref{def:enr-cat}, for each pair of modules $(M,\rho^M)$ and $(N,\rho^N)$ in $\mathsf{Mod}(\cC)_A$, we  need to define a corresponding Hom-object in $\cC$, which we will denote by 
\[
[M,N]_A := \mathsf{Mod}(\cC)_A(M,N),  
\]
and show that there exists composition and identity morphisms for these objects satisfying associative and unitality axioms.
By using the right-closure of $\cC$ and Example~\ref{ex:right-closed}, we define 
\[
(\text{eq}:[M,N]_A\hookrightarrow [M,N]):= \text{the equalizer of the morphisms $R_{M,N}, S_{M,N}:[M,N]\rightarrow[M\otimes A,N]$},
\]
where the left transposes of $R_{M,N}$ and $S_{M,N}$ are given as follows:
\begin{align*}
R_{M,N}^\# &:  [M,N]\otimes M\otimes A\stackrel{\id \otimes\rho^M}{-\kern-5pt\longrightarrow}[M,N]\otimes M\stackrel{\epsilon^M_N}{\longrightarrow} N,\\
S_{M,N}^\# &:  [M,N]\otimes M\otimes A\stackrel{\epsilon^M_N\otimes \id}{-\kern-5pt\longrightarrow}N\otimes A\stackrel{\rho^N}{\longrightarrow} N. 
\end{align*}
In particular, $R_{M,N} = (R_{M,N}^\#)^\flat$ and $S_{M,N} = (S_{M,N}^\#)^\flat$ by \eqref{eq:sharp-flat}. 

\skp
Since $\cC$ has equalizers, we obtain that $[M,N]_A$ is an object of $\cC$, and $\text{eq}: [M,N]_A \hookrightarrow [M,N]$ is a monomorphism in $\cC$. Moreover, by Lemma~\ref{lem:equalizers}, we get that 
\begin{equation} \label{eq:equal-sharp}
 R_{M,N}^\#  \circ  (\text{eq}  \otimes  \id_{M \otimes A}) \; =  \; S_{M,N}^\# \circ (\text{eq}   \otimes  \id_{M \otimes A})
\end{equation}
as morphisms $[M,N]_A \otimes M \otimes A \to N$ in $\cC$. 

\skp

Now the rest of the proof follows from Claims~\ref{claim1} and~\ref{claim2} below. 

\begin{claim} \label{claim1}
\textnormal{(a)}\; For all $A$-modules $M,N,P\in\mathsf{Mod}(\cC)_A$, there is a composition morphism:    
 \[\gamma^A_{M,N,P}:[N,P]_A\otimes[M,N]_A\rightarrow[M,P]_A \]
rendering the following diagram commutative.
\[
{\small
\xymatrix@C=4pc@R=1.2pc{
 & [N,P]_A\otimes[M,N]_A 
    \ar[d]^{\textnormal{eq} \; \otimes \; \textnormal{eq}}
    \ar@{-->}[ldd]_{\gamma^A_{M,N,P}}
 & \\
 & [N,P]\otimes[M,N]
    \ar[d]^{\gamma^\cC_{M,N,P}}
 & \\
[M,P]_A
    \ar[r]^{\textnormal{eq}}
& [M,P]
    \ar@/^/[r]^{R_{M,P}}\ar@/_/[r]_{S_{M,P}}
& [M\otimes A,P]
}
}
 \]
\textnormal{(b)}\; The composition of the morphisms $\gamma^A_{M,N,P}$ is associative, i.e., for all  $M,N,P,Q \in\mathsf{Mod}(\cC)_A$, we get:
    \[ \gamma^A_{M,N,Q} \circ (\gamma^A_{N,P,Q}  \otimes  \id_{[M,N]_A}) \; = \; 
    \gamma^A_{M,P,Q}\circ ( \id_{[P,Q]_A}  \otimes \gamma^A_{M,N,P} ). \]
\end{claim}

\begin{claim} \label{claim2}
\textnormal{(a)} For each $(M,\rho^M)\in\mathsf{Mod}(\cC)_A$, there is an identity morphism $\kappa^A_M:\one\rightarrow[M,M]_A$
rendering the following diagram commutative. 
\[
{\small
\xymatrix@C=4pc{
 & \one
    \ar[d]^{\kappa^\cC_M}
    \ar@{-->}[ld]_{\kappa^A_M}
 & \\
[M,M]_A
    \ar[r]^{\textnormal{eq}}
 & [M,M]
    \ar@/^/[r]^(.45){R_{M,M}}\ar@/_/[r]_(.45){S_{M,M}}
& [M\otimes A,M]
}
}
 \]

\noindent \textnormal{(b)} The morphisms $\kappa^A_M$ satisfy the unitality axioms, i.e. for all $M,N\in\mathsf{Mod}(\cC)_A$, we have:
\[ \gamma^A_{M,N,N} \circ (\kappa^A_N  \otimes  \id_{[M,N]_A}) \; = \; \ell_{[M,N]_A}, \quad  
\quad 
\gamma^A_{M,M,N} \circ (\id_{[M,N]_A} \otimes  \kappa^A_M) \; = \; r_{[M,N]_A}.
\]
\end{claim}

\smallskip

The proofs of Claims~\ref{claim1} and~\ref{claim2} are provided in Appendix~\ref{app:Sec3.3claims}. This concludes the proof of the theorem.
\end{proof}

%%%%%%%%%%%%%%%%%%%%%%%%%%
%%%%%%%%%%%%%%%%%%%%%%%%%%
%%%%%%%%%%%%%%%%%%%%%%%%%%

\subsection{Endomorphism algebras in categories of modules} \label{sec:endomalg} In this part, we will study the endomorphism algebra $[A,A]_A$ arising in the setting of Theorem~\ref{thm:closed}.

\begin{theorem}\label{thm:endomalg}  Let $(\cC, \otimes, \one)$ be a right-closed monoidal category, and assume that $\cC$ has equalizers. If $A\in\mathsf{Alg}(\cC)$, then $A\cong [A,A]_A$ as algebras in $\cC$.  
\end{theorem}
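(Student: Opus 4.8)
The plan is to exhibit an explicit isomorphism $\phi \colon A \to [A,A]_A$ in $\cC$ and then check it is a morphism of algebras. The natural candidate arises from viewing $A$ as the free rank-one right $A$-module $(A,m)$, so that $[A,A]_A$ plays the role of "$\Hom_A(A,A) \cong A$". First I would construct a morphism $A \to [A,A]$ in $\cC$ whose image lands inside the equalizer $\mathrm{eq}\colon [A,A]_A \hookrightarrow [A,A]$. Concretely, consider the left multiplication map: take $\mu^\# \colon A \otimes A \xrightarrow{m} A$ and let $\phi_0 := (\mu^\#)^\flat = \mu^\flat \colon A \to [A,A]$ be its right transpose. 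I would verify that $\phi_0$ equalizes $R_{A,A}$ and $S_{A,A}$; by Lemma~\ref{lem:equalizers} this is equivalent to checking that $\phi_0 \otimes \id_{A \otimes A}$ precomposed appropriately satisfies $R_{A,A}^\# \circ (\phi_0 \otimes \id) = S_{A,A}^\# \circ (\phi_0 \otimes \id)$ as maps $A \otimes A \otimes A \to A$, which unwinds (using that $(\phi_0)^\# = m$ and the definitions of $R^\#, S^\#$ with $\rho^A = m$) precisely to the associativity axiom $m(\id_A \otimes m) = m(m \otimes \id_A)$. Hence $\phi_0$ factors uniquely through the equalizer, giving $\phi \colon A \to [A,A]_A$ with $\mathrm{eq} \circ \phi = \phi_0$.

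Next I would produce the inverse. The counit map $\epsilon^A_A \colon [A,A] \otimes A \to A$ composed with the unit $u$ of $A$ gives $[A,A] \xrightarrow{\;\id \otimes u\;} [A,A]\otimes \one = [A,A] \otimes A \xrightarrow{\epsilon^A_A} A$ — call the restriction to $[A,A]_A$ (i.e.\ precomposed with $\mathrm{eq}$) the evaluation-at-unit map $\psi \colon [A,A]_A \to A$. I would then check $\psi \circ \phi = \id_A$ and $\phi \circ \psi = \id_{[A,A]_A}$. The first identity reduces to the unitality axiom $m(\id_A \otimes u) = \id_A$ after transposing. The second identity is the more delicate one: since $\mathrm{eq}$ is a monomorphism, it suffices to show $\mathrm{eq} \circ \phi \circ \psi = \mathrm{eq}$, i.e.\ that any element of $[A,A]_A$ — equivalently, any $A$-module endomorphism of the regular module $A$ — is left multiplication by its value on $u$; diagrammatically this is exactly the equalizer condition \eqref{eq:equal-sharp} specialized to $M = N = A$ together with the unitality of $u$, and I would carry it out by transposing across the adjunction $(-\otimes A)\dashv [A,-]$ and using \eqref{eq:sharp-flat}.

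Finally I would check that $\phi$ is a morphism of algebras, where the algebra structure on $[A,A]_A$ is the one inherited from the self-enriched structure of $\cC$ via Theorem~\ref{thm:closed}: the multiplication is $\gamma^A_{A,A,A}$ and the unit is $\kappa^A_A$. For the unit, I must show $\phi \circ u = \kappa^A_A$, which again reduces via $\mathrm{eq}$ and a transpose computation to the statement that $\kappa^\cC_A$ is the transpose of the left unitor $\ell_A$ while $\mathrm{eq}\circ\phi\circ u$ is the transpose of $m(u \otimes \id_A) = \ell_A = \id_A$. For multiplicativity, I must show $\phi \circ m = \gamma^A_{A,A,A} \circ (\phi \otimes \phi)$; composing with the mono $\mathrm{eq}$ and chasing through the commutative square defining $\gamma^A_{A,A,A}$ in Claim~\ref{claim1}(a), this becomes a statement purely about $\gamma^\cC_{A,A,A}$ and the transposes $\phi_0 = m^\flat$, which unwinds to the associativity of $m$ once more.

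The main obstacle I anticipate is the second inverse identity, $\phi \circ \psi = \id_{[A,A]_A}$: all the other steps are essentially repackagings of the associativity and unitality axioms of $(A,m,u)$ across the Tensor–Hom adjunction, but this one genuinely uses that the Hom-object was cut out as an \emph{equalizer} — it is the categorical incarnation of "an $A$-linear endomorphism of $A$ is determined by where it sends $1$." Keeping track of which maps are transposes of which, and applying \eqref{eq:equal-sharp} in the right place, will require care; everything else should follow by routine (if lengthy) diagram chasing of the kind deferred to the Appendix elsewhere in the paper.
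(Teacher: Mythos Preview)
Your proposal is correct and follows essentially the same route as the paper: the paper also defines $\varphi:A\to[A,A]_A$ by showing $m^\flat$ equalizes $R_{A,A},S_{A,A}$ (its Claim~\ref{claim3}, using Lemma~\ref{lem:equalizers} and associativity, exactly as you outline), defines the inverse $\psi$ as evaluation at $u$, verifies $\psi\varphi=\id_A$ via unitality and $\varphi\psi=\id_{[A,A]_A}$ after cancelling the mono $\textnormal{eq}$, and then checks $\varphi$ is an algebra map (its Claim~\ref{claim4}). Your anticipation that the identity $\varphi\psi=\id$ is the one place genuinely requiring the equalizer condition~\eqref{eq:equal-sharp} is on point.
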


\begin{proof}
First, we verify $[A,A]_A$ is an algebra in $\cC$. It is an object of $\cC$ by Theorem~\ref{thm:closed}. Moreover, the algebra structure maps of $[A,A]_A$ are given by the $\cC$-enrichment. Namely, the multiplication map is given by composition $\gamma^A_{A,A,A}:[A,A]_A\otimes [A,A]_A\rightarrow [A,A]_A$, and the unit map is given by the identity map $\kappa_A^A:\one\rightarrow [A,A]_A$. The associative and unital laws follow from the enrichment axioms.

Next, we construct inverse maps $\varphi:A\rightarrow [A,A]_A$ and $\psi: [A,A]_A \rightarrow A$ in $\cC$. Define $\psi$ as:

\vspace{-.15in} 

\[
\xymatrix@C=3pc{
\psi: [A,A]_A \ar[r]^(.4){\textnormal{eq}}
&[A,A] =[A,A]\otimes\one
\ar[r]^(.58){\id \; \otimes \; u}
&[A,A]\otimes A \ar[r]^(.58){\epsilon_A^A} 
&A.
}
\]

\vspace{.05in}

\noindent To construct $\varphi$, we will need the universal property of the equalizer $[A,A]_A$.

\begin{claim} \label{claim3}
The right transpose $m^\flat:A\rightarrow[A,A]$ of the multiplication map $m$ of $A$ equalizes the maps $R_{A,A}$ and $S_{A,A}$ defined in Theorem~\ref{thm:closed}. Thus, we obtain a map $\varphi$ such that $\textnormal{eq}\circ \varphi=m^\flat$.
\[
{\small
\xymatrix@C=4pc{
 & A
    \ar[d]^(.45){m^\flat}
    \ar@{-->}[ld]_(.45){\varphi}
 & \\
[A,A]_A
    \ar[r]^{\textnormal{eq}}
 & [A,A]
    \ar@/^/[r]^(.45){R_{A,A}}\ar@/_/[r]_(.45){S_{A,A}}
& [A\otimes A,A]
}
}
 \]
 \end{claim}
\noindent The proof of this claim is provided in Appendix~\ref{app:Sec3.4claims}.

%Hence, the universal property of $[A,A]_A$ tells us that $m^\flat$ induces a map $\varphi:A\rightarrow[A,A]_A$ satisfying $\text{eq}\circ\varphi=m^\flat$.

We see that $\varphi$ and $\psi$ are mutual inverses as follows. We compute: 
%First we check $\psi\circ\varphi=\id_A$ via the computation below:
\[
\begin{array}{rll}
    \psi \hspace{0.02in} \varphi &= \epsilon_A^A \hspace{0.02in} (\id_{[A,A]}\otimes u) \hspace{0.02in} \text{eq}\hspace{0.042in} \varphi 
    &= \epsilon_A^A \hspace{0.02in} (\id_{[A,A]}\otimes u) \hspace{0.02in} (m^\flat\otimes\id_\one) \\[.4pc]
    &= \epsilon_A^A \hspace{0.02in} (m^\flat\otimes\id_A) \hspace{0.02in} (\id_A\otimes u) 
    &= \epsilon_A^A \hspace{0.02in} ([A,m]\otimes\id_A) \hspace{0.02in} (\eta_A^A\otimes\id_A) \hspace{0.02in} (\id_A\otimes u) \\[.4pc]
    &= m \hspace{0.02in} \epsilon_{A\otimes A}^A \hspace{0.02in} (\eta_A^A\otimes\id_A) \hspace{0.02in} (\id_A\otimes u) 
    &= m \hspace{0.02in} (\id_A\otimes u) 
    \; \; = \id_A.
\end{array}
\]
The first equality follows by the definition of $\psi$, the second by the construction of $\varphi$, the third by level-exchange, the fourth by definition of $m^\flat$, the fifth by naturality of $\epsilon$, the sixth by an adjunction triangle axiom, and the last by a unitality axiom.
On the other hand, checking that $\varphi \hspace{0.02in} \psi=\id_{[A,A]_A}$ is equivalent to checking that $m^\flat \hspace{0.02in} \epsilon_A^A \hspace{0.02in} (\id_{[A,A]}\otimes u) = \id_{[A,A]}$,
by simply applying the monomorphism  $\text{eq}$ on the left of both sides  and unpacking definitions. This latter equality is proven below.

\vspace{-.2in}

\[
{\small
\begin{tikzcd}[row sep=small, column sep=large]
	{[A,A]} &&& {[A,A]\otimes A} && A \\
	\\
	& {[A,[A,A]\otimes A]} && {[A,[A,A]\otimes A\otimes A]} && {[A,A\otimes A]} \\
	\\
	{[A,A]} &&&&& {[A,A]}
	\arrow["{\eta_{[A,A]}^A}"{pos=0.6}, from=1-1, to=3-2]
	\arrow["{\id\otimes u}", from=1-1, to=1-4]
	\arrow["{\eta_{[A,A]\otimes A}^A}", from=1-4, to=3-4]
	\arrow["{[A,\id\otimes\id\otimes u]}", from=3-2, to=3-4]
	\arrow["{[A,\epsilon_A^A\otimes\id]}", from=3-4, to=3-6]
	\arrow["{\epsilon_A^A}", from=1-4, to=1-6]
	\arrow["{\eta_A^A}", from=1-6, to=3-6]
	\arrow["\id"', from=1-1, to=5-1]
	\arrow["{[A,\epsilon_A^A]}"{pos=0.3}, from=3-2, to=5-1]
	\arrow["\id"', from=5-1, to=5-6]
	\arrow["{[A,m]}", from=3-6, to=5-6]
	\arrow["{[A,\id\otimes u]}"', from=5-1, to=3-6]
\end{tikzcd}
}
\]

\noindent The top two quadrilaterals commute by naturality of $\eta$, the middle quadrangle commutes by level-exchange, the left triangle commutes by an adjunction triangle axiom, and the bottom triangle commutes by a unitality axiom. 

\begin{claim}\label{claim4}
    The map $\varphi:A\rightarrow[A,A]_A$ is unital and multiplicative.
\end{claim}

This claim is proven in Appendix~A.3. Thus, $\varphi$ is an isomorphism of algebras in $\cC$, which concludes the proof of the theorem.
\end{proof}
%%%%%%%%%%%%%%%%%%%%%%%%%%
%%%%%%%%%%%%%%%%%%%%%%%%%%
%%%%%%%%%%%%%%%%%%%%%%%%%%
\subsection{Graded closed monoidal categories} \label{sec:graded-closed}

For the results in this section, recall Hypotheses~\ref{hypotheses}. Specifically, recall that $(\cC,\otimes,\one)$ is assumed to have (possibly infinite) biproducts.

\begin{theorem}[\hbox{$[\![Y,-]\!]$}]
\label{thm:closed2}
If $(\cC,\otimes,\one)$ is a right-closed monoidal category, then so is $(\mathsf{Gr}(\cC),\bar{\otimes},\bar{\one})$. Here, we take Hom objects of $\mathsf{Gr}(\cC)$ to be
\[
[\![Y,Z]\!]:= 
\textstyle \left(\bigoplus_{p \in G} [Y_{g^{-1}p}, Z_p] \right)_{g \in G},
\]
where for a morphism $\varphi$ in $\mathsf{Gr}(\cC)$, we define $[\![Y,\varphi ]\!]_g := \bigoplus_{p\in G}[Y_{g^{-1}p}, \varphi_p]$.
\end{theorem}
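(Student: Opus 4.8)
The plan is to verify directly that the proposed assignment $Z \mapsto [\![Y,Z]\!]$ defines a right adjoint to $(-\,\bar\otimes\,Y)$ on $\mathsf{Gr}(\cC)$, by exhibiting a natural bijection
\[
\Hom_{\mathsf{Gr}(\cC)}(X \;\bar\otimes\; Y,\; Z) \;\cong\; \Hom_{\mathsf{Gr}(\cC)}(X,\; [\![Y,Z]\!]),
\]
natural in $X$ and $Z$, and then invoking the standard fact that a functor admitting such a representing object for each $Z$ is a right adjoint. The key computation is bookkeeping with biproducts: a morphism $X \;\bar\otimes\; Y \to Z$ in $\mathsf{Gr}(\cC)$ is a $G$-tuple of morphisms $(X \;\bar\otimes\; Y)_g = \bigoplus_{p \in G} (X_p \otimes Y_{p^{-1}g}) \to Z_g$ in $\cC$, which by the universal property of the biproduct is the same as a family $(f_{p,g} \colon X_p \otimes Y_{p^{-1}g} \to Z_g)_{p,g \in G}$. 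On the other side, a morphism $X \to [\![Y,Z]\!]$ is a $G$-tuple of morphisms $X_q \to [\![Y,Z]\!]_q = \bigoplus_{p \in G}[Y_{q^{-1}p}, Z_p]$ in $\cC$, i.e.\ (again by the biproduct universal property) a family $(g_{q,p} \colon X_q \to [Y_{q^{-1}p}, Z_p])_{q,p \in G}$.

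The first step is to match these two families index-for-index. For fixed $p, g \in G$, the $\cC$-level right-closure adjunction $(-\otimes Y_{p^{-1}g}) \dashv [Y_{p^{-1}g},-]$ from Example~\ref{ex:right-closed} gives a natural bijection between morphisms $X_p \otimes Y_{p^{-1}g} \to Z_g$ and morphisms $X_p \to [Y_{p^{-1}g}, Z_g]$. Setting $q = p$ and matching the target index (the summand indexed by $p$ in $[\![Y,Z]\!]_q$ with $q^{-1}p$ replaced appropriately: one checks $Y_{q^{-1}p}=Y_{p^{-1}g}$ and $Z_p = Z_g$ force $g = p$ in a way I will spell out — more precisely the summand of $[\![Y,Z]\!]_p$ indexed by $g$ is $[Y_{p^{-1}g}, Z_g]$, matching exactly the data $f_{p,g}$), the two families are in natural bijection. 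Assembling these bijections over all indices produces the claimed bijection on Hom-sets. So the second step is to package this: define the comparison map on a morphism by transposing each component via~\eqref{eq:right-closed}, and its inverse by transposing back, using \eqref{eq:sharp-flat} to see these are mutually inverse.

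The third step is naturality. Naturality in $Z$ follows because the $\cC$-level transpose $\psi \mapsto \psi^\flat$ is natural in the codomain and because $[\![Y,\varphi]\!]_g = \bigoplus_p [Y_{g^{-1}p}, \varphi_p]$ was defined precisely as the componentwise application of $[Y_{g^{-1}p},-]$; naturality in $X$ follows because precomposition with $h \;\bar\otimes\; \id_Y$ corresponds under the biproduct identifications to precomposition with the components of $h$, and the $\cC$-level transpose is natural in its domain as well. The conclusion is that $[\![Y,-]\!]$ is right adjoint to $(-\,\bar\otimes\,Y)$, so $\mathsf{Gr}(\cC)$ is right-closed.

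The main obstacle is purely organizational rather than conceptual: keeping the index substitutions straight when composing the biproduct universal properties with the $\cC$-level transposes, and being careful that the biproducts involved genuinely exist — which is guaranteed by Hypothesis~\ref{hypotheses}, under which $\cC$ is additive with $G$-indexed biproducts when $G$ is infinite, and under which $\otimes$ is additive so that it distributes over the biproducts appearing in the definition of $\bar\otimes$ and $[\![-,-]\!]$. I would also remark that one should double-check the defining morphism $[\![Y,\varphi]\!]$ is functorial in $\varphi$ (so that $[\![Y,-]\!]$ is genuinely a functor $\mathsf{Gr}(\cC) \to \mathsf{Gr}(\cC)$), which is immediate from functoriality of each $[Y_{g^{-1}p},-]$ and additivity of biproduct assembly.
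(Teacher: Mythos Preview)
Your proposal is correct and follows essentially the same approach as the paper: both arguments unpack a morphism $X\,\bar\otimes\,Y\to Z$ as a doubly-indexed family $\{f_{p,g}:X_p\otimes Y_{p^{-1}g}\to Z_g\}$ via the coproduct property, unpack a morphism $X\to[\![Y,Z]\!]$ as a doubly-indexed family via the product property, and then match them index-by-index through the $\cC$-level transposes, checking naturality in $X$ (and $Z$) componentwise. The only cosmetic difference is that the paper writes the bijection explicitly as $(\psi^\flat)_g:=\bigoplus_p(\psi_{p,g})^\flat$ and its inverse, whereas you describe the same construction in prose; your momentary index tangle in the second paragraph resolves correctly once you observe that the $g$-summand of $[\![Y,Z]\!]_p$ is $[Y_{p^{-1}g},Z_g]$.
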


\begin{proof}
We wish to show that there is a bijection:
\begin{equation}
\label{eq:baradj}
    \Hom_{\mathsf{Gr}(\cC)}(X \;\bar{\otimes} \; Y, Z)  \; \cong \; \Hom_{\mathsf{Gr}(\cC)}(X, [\![ Y,Z ]\!] ),
\end{equation}
that is natural in each variable $X, Z \in \mathsf{Gr}(\cC)$. To proceed, note that for a morphism $\psi: X \; \bar{\otimes} \; Y \to Z$, the universal property of $\bigoplus$ as coproduct implies that the degree $g$ part of $\psi$ can be captured as a collection of morphisms in $\cC$:
\[
\textstyle \psi_g: \bigoplus_{p \in G} X_p \otimes Y_{p^{-1} g} \to Z_g
 \quad \leftrightsquigarrow \quad 
 \{\psi_{g,p}: X_p \otimes Y_{p^{-1} g} \to Z_g\}_{p \in G}.
\]
Likewise, given a morphism $\nu: X\to [\![Y,Z ]\!]$, the universal property of $\bigoplus$ as product implies that the degree $g$ part of $\nu$ can be captured as a collection of morphisms in $\cC$:
\[
\textstyle \nu_g:  X_g \to \bigoplus_{p \in G} [Y_{g^{-1} p}, Z_p]
 \quad \leftrightsquigarrow \quad 
 \{\nu_{g,p}:X_g \to  [Y_{g^{-1} p}, Z_p]\}_{p \in G}.
\]
Now we define \eqref{eq:baradj} as follows: 
\begin{align*}
\psi: X \; \bar{\otimes} \; Y \to Z \quad &\mapsto \quad
\psi^\flat = \big( \hspace{.02in} (\psi^\flat)_g:=( \textstyle \bigoplus_{p \in G} \;(\psi_{p,g})^\flat )_g \hspace{.02in} \big)_{g \in G} \\[.3pc]
 \nu^\#= \big( \hspace{.02in} (\nu^\#)_g:= (\textstyle \bigoplus_{p \in G} \;(\nu_{p,g})^\# )_g  \hspace{.02in} \big)_{g \in G} \quad &\mapsfrom \quad
\nu: X \to [\![Y, Z]\!]
\end{align*}
Here, the morphisms $(\nu_{p,g})^\#: X_p \otimes Y_{p^{-1}g} \to Z_g$ and $(\psi_{p,g})^\flat: X_g \to [Y_{g^{-1}p}, Z_p]$  exist by the right-closure of $\cC$. It is straight-forward to check that 
\[
(\nu^\#)^\flat = \nu
\quad \quad \text{and} \quad \quad 
(\psi^\flat)^\# = \psi
\]
using the fact that $((\nu_{g,p})^\#)^\flat = \nu_{g,p}$ and
$((\psi_{g,p})^\flat)^\# = \psi_{g,p}$
in $\cC$; see \eqref{eq:sharp-flat}. Moreover, we check naturality in $X$  and leave the computation for $Z$ for the reader. Take a morphism $\chi: X' \to X$. Then for any $\psi \in \Hom_{\mathsf{Gr}(\cC)}(X \;\bar{\otimes} \; Y, Z)$, we obtain that:
\[
[\chi^* \circ (-)^\flat](\psi) \; = \;[(-)^\flat\circ (\chi \; \bar{\otimes} \; \id_{Y})^*](\psi) \quad \text{in}  \; \Hom_{\mathsf{Gr}(\cC)}(X', [\![Y, Z ]\!]).
\]
Indeed, in degree $g$ we have the following computation:
\begin{align*}
(\psi^\flat \circ \chi)_g
& \; = \; \textstyle \bigoplus_{p \in G}\;  (\psi_{p,g})^\flat \circ \chi_g 
 \; \overset{(*)}{=} \;
\textstyle \bigoplus_{p \in G}\;  (\psi_{p,g}\circ (\chi_g \otimes \id_{\hspace{.01in}Y_{g^{-1}p}}))^\flat \\[.2pc]
& \; = \;
\textstyle \bigoplus_{p \in G}\;  ((\psi \circ (\chi\; \bar{\otimes} \; \id_{\hspace{.01in}Y}))_{p,g})^\flat 
 \; = \;
((\psi \circ (\chi\; \bar{\otimes} \; \id_{\hspace{.01in}Y}))^\flat)_{g}.
\end{align*}
Here, we use  the right-closure of $\cC$, i.e., the naturality in $X$ for the bijection \eqref{eq:Crtclosed}, to justify~$(*)$. 
\end{proof}

Combining Theorem~\ref{thm:closed} and~\ref{thm:closed2} above, we obtain the following result.

\begin{corollary} \label{cor:closed} 
Let $(\cC,\otimes,\one)$ be a right-closed monoidal category with equalizers, and take an algebra $A$ in $\mathsf{Gr}(\cC)$. Then, the category of modules $\mathsf{GrMod}(\cC)_A$ is enriched over $\mathsf{Gr}(\cC)$.
\end{corollary}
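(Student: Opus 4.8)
The plan is to deduce this from Theorem~\ref{thm:closed}, applied not to $\cC$ itself but to the monoidal category $(\mathsf{Gr}(\cC), \bar{\otimes}, \bar{\one})$ of Proposition~\ref{prop:gr(C)-monoidal}, and then to transport the resulting enrichment across the isomorphism of categories supplied by Proposition~\ref{prop:mod-bar}.

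First I would check that $\mathsf{Gr}(\cC)$ meets the hypotheses of Theorem~\ref{thm:closed}. It is right-closed: this is precisely Theorem~\ref{thm:closed2}, with internal Hom $[\![-,-]\!]$. It has equalizers: since $\mathsf{Gr}(\cC) = \prod_{g \in G}\cC$ is a product category, all limits --- in particular equalizers --- are formed degreewise, so they exist because $\cC$ has equalizers by hypothesis. And $A$ is, by assumption, an algebra in $\mathsf{Gr}(\cC)$. Theorem~\ref{thm:closed} then applies and yields that $\mathsf{Mod}(\mathsf{Gr}(\cC))_A$ is enriched over $\mathsf{Gr}(\cC)$, with the Hom-object of a pair $(M,N)$ realized as the equalizer $[M,N]_A \hookrightarrow [\![M,N]\!]$ built in the proof of that theorem.

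The final step is transport of structure. Proposition~\ref{prop:mod-bar} gives an isomorphism of categories $\mathsf{Mod}(\mathsf{Gr}(\cC))_A \cong \mathsf{GrMod}(\cC)_A$, which is also the identification recorded in Convention~\ref{conv:algs}. Along any isomorphism of categories an enrichment carries over essentially verbatim: one declares the Hom-object between two graded $A$-modules to be the Hom-object between the corresponding modules in $\mathsf{Gr}(\cC)$, and transfers the composition and identity morphisms through the bijection on objects; the enriched associativity and unitality axioms are then inherited. This gives the claimed $\mathsf{Gr}(\cC)$-enrichment of $\mathsf{GrMod}(\cC)_A$.

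I do not anticipate any genuine obstacle: the corollary is a bookkeeping combination of Theorems~\ref{thm:closed} and~\ref{thm:closed2}. The only point worth making explicit is that $\mathsf{Gr}(\cC)$ inherits equalizers from $\cC$ precisely because it is a product category, so that Theorem~\ref{thm:closed} is genuinely applicable to it.
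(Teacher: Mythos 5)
Your proof is correct and follows essentially the same route as the paper: verify that $\mathsf{Gr}(\cC)$ is right-closed (Theorem~\ref{thm:closed2}) and has equalizers (formed degreewise), then apply Theorem~\ref{thm:closed} to $\mathsf{Gr}(\cC)$ and $A$. You make explicit the final transport across the isomorphism $\mathsf{Mod}(\mathsf{Gr}(\cC))_A \cong \mathsf{GrMod}(\cC)_A$ from Proposition~\ref{prop:mod-bar}, which the paper leaves implicit via Convention~\ref{conv:algs}.
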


\begin{proof}
Our assumptions ensure that $(\mathsf{Gr}(\cC),\bar{\otimes},\bar{\one})$ is right-closed and has equalizers  by Theorem~\ref{thm:closed2}.
%and Lemma~\ref{lem:bar-identities}(c).
Indeed, $\mathsf{Gr}(\cC)$ inherits any categorical co/limits (e.g., equalizers) that $\cC$ has.
Now apply Theorem~\ref{thm:closed} to the monoidal category $\mathsf{Gr}(\cC)$ and to the  algebra $A$ in $\mathsf{Gr}(\cC)$ to obtain the result.
\end{proof}

Now applying Lemma~\ref{lem:tranport-enrichment} to the monoidal functor $\langle -\rangle_e:\mathsf{Gr}(\cC)\rightarrow\cC$ from Proposition~\ref{prop:e-functor-monoidal}, we get the immediate result below.

\begin{corollary} \label{cor:GrModCA-main} 
Let $(\cC,\otimes,\one)$ be a right-closed monoidal category with equalizers, and take an algebra $A$ in $\mathsf{Gr}(\cC)$. Then,  $\mathsf{GrMod}(\cC)_A$ is enriched over $\cC$, with Hom objects
    \[(\mathsf{GrMod}(\cC)_A)^{\cC}(M,N):= \langle[\![M,N]\!]_A\rangle_e,\]
for $M,N \in \mathsf{GrMod}(\cC)_A$.
\qed
\end{corollary}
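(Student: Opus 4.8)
The plan is to combine the two enrichment results already established, Corollary~\ref{cor:closed} and Lemma~\ref{lem:tranport-enrichment}, with the monoidal functor $\langle -\rangle_e$ of Proposition~\ref{prop:e-functor-monoidal}. First I would invoke Corollary~\ref{cor:closed}: under the stated hypotheses (that $(\cC,\otimes,\one)$ is right-closed with equalizers and that $A$ is an algebra in $\mathsf{Gr}(\cC)$), the category $\mathsf{GrMod}(\cC)_A$ carries a $\mathsf{Gr}(\cC)$-enrichment, whose Hom-objects are the internal-Hom equalizers $[\![M,N]\!]_A \in \mathsf{Gr}(\cC)$ constructed by applying Theorem~\ref{thm:closed} to the right-closed monoidal category $(\mathsf{Gr}(\cC),\bar\otimes,\bar\one)$ of Theorem~\ref{thm:closed2}.

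Next I would recall from Proposition~\ref{prop:e-functor-monoidal} that $\langle -\rangle_e\colon \mathsf{Gr}(\cC)\to\cC$ is a monoidal functor, with structure maps $\langle -\rangle_e^{(2)}$ the canonical coproduct inclusions and $\langle -\rangle_e^{(0)}$ the identity. Applying Lemma~\ref{lem:tranport-enrichment} with $\cV = \mathsf{Gr}(\cC)$, $\cW = \cC$, $F = \langle -\rangle_e$, and the $\mathsf{Gr}(\cC)$-category $\cC := \mathsf{GrMod}(\cC)_A$ then transports the enrichment: $\mathsf{GrMod}(\cC)_A$ becomes a $\cC$-category with the same objects, with Hom-objects $\langle [\![M,N]\!]_A\rangle_e$, with composition $\langle \gamma^{\mathsf{Gr}(\cC)}_{M,N,P}\rangle_e \circ \langle -\rangle_e^{(2)}$, and with identities $\langle \kappa^{\mathsf{Gr}(\cC)}_M\rangle_e$. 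This yields exactly the asserted Hom-objects $(\mathsf{GrMod}(\cC)_A)^{\cC}(M,N) = \langle [\![M,N]\!]_A\rangle_e$, and the enrichment axioms hold automatically since Lemma~\ref{lem:tranport-enrichment} guarantees the transported data satisfies them.

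Since both ingredients are quoted verbatim from earlier in the excerpt, there is essentially no obstacle here — the statement is, as the paper says, an "immediate result." The only point worth spelling out is the explicit identification of the transported Hom-object with $\langle [\![M,N]\!]_A\rangle_e$, which is just unwinding the definition of the transported enrichment in Lemma~\ref{lem:tranport-enrichment}(b) together with the description of $[\![-,-]\!]_A$ coming from Corollary~\ref{cor:closed}. Accordingly the proof is a two-line citation chain, and the \qed in the statement signals that no further argument is needed.

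\begin{proof}
By Corollary~\ref{cor:closed}, the category $\mathsf{GrMod}(\cC)_A$ is enriched over $\mathsf{Gr}(\cC)$, with Hom-objects $[\![M,N]\!]_A \in \mathsf{Gr}(\cC)$ obtained by applying Theorem~\ref{thm:closed} to the right-closed monoidal category $(\mathsf{Gr}(\cC),\bar\otimes,\bar\one)$ of Theorem~\ref{thm:closed2} and to the algebra $A$. By Proposition~\ref{prop:e-functor-monoidal}, the projection $\langle -\rangle_e\colon \mathsf{Gr}(\cC)\to\cC$ is a monoidal functor. Applying Lemma~\ref{lem:tranport-enrichment} to $\langle -\rangle_e$ and to the $\mathsf{Gr}(\cC)$-category $\mathsf{GrMod}(\cC)_A$ transports this enrichment to a $\cC$-enrichment on $\mathsf{GrMod}(\cC)_A$, with the same objects and with Hom-objects $(\mathsf{GrMod}(\cC)_A)^{\cC}(M,N) = \langle [\![M,N]\!]_A\rangle_e$ for $M,N \in \mathsf{GrMod}(\cC)_A$, as claimed.
\end{proof}
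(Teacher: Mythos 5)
Your proof is correct and matches the paper's argument exactly: the paper also obtains this corollary by applying Lemma~\ref{lem:tranport-enrichment} to the monoidal functor $\langle -\rangle_e$ of Proposition~\ref{prop:e-functor-monoidal}, starting from the $\mathsf{Gr}(\cC)$-enrichment provided by Corollary~\ref{cor:closed}. Nothing to add.
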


%%%%%%%%%%%%%%%%%%%%%%%%%%
%%%%%%%%%%%%%%%%%%%%%%%%%%
%%%%%%%%%%%%%%%%%%%%%%%%%%

\subsection{Graded categories with shifts} \label{sec:graded-shifts}
In this part, we generalize Zhang's notion of a $G$-graded category \cite[Definition~3.2]{Zhang1996} to the enriched setting.

\begin{definition}[$G$-graded category]\label{def:G-grading}
    Let $\cD$ be a $\cV$-category. A \textit{$G$-grading} on $\cD$ consists of a family of $\cV$-enriched functors $(S_g:\cD\rightarrow\cD,\{(S_g)_{X,Y}:\cD(X,Y)\rightarrow\cD(S_g(X),S_g(Y))\})_{g\in G}$ satisfying the following conditions, for all $X,Y\in\cD$ and all $g,h\in G$.
    \begin{enumerate}[(a)]
        
        \item $S_e(X)=X$.
        
        \skp
        
        \item $(S_e)_{X,Y}=\id_{\cD(X,Y)}$.
        
        \skp

        \item $S_g(S_h(X))=S_{gh}(X)$.

        \skp
        
        \item $(S_g)_{S_h(X),S_h(Y)}(S_h)_{X,Y}=(S_{gh})_{X,Y}$.

        \skp
        
        \item $(S_g)_{X,Y}$ are isomorphisms.
    \end{enumerate}
If $\cD$ is equipped with a $G$-grading, we call it a \textit{$G$-graded $\cV$-category}. The enriched functors $(S_g,\{(S_g)_{X,Y})\})$ are called the \textit{shift functors} of $\cD$.
\end{definition}

For the next result, consider $\mathsf{GrMod}(\cC)_A$ as a $\cC$-category as in Corollary \ref{cor:GrModCA-main}.

\begin{proposition} \label{prop:GrModCA-gr}
We have that $\mathsf{GrMod}(\cC)_A$ is a $G$-graded $\cC$-category, where:
\begin{enumerate}[\upshape (a)]
\item $S_g:(M,\rho^M)\mapsto(S_g(M),\rho^{S_g(M)})$ with $S_g(M):=(M_{g^{-1}d})_{d\in G}$ and $\rho^{S_g(M)}:=(\rho^M_{g^{-1}d})_{d\in G}$,
 \skp
\item $(S_g)_{M,N}:=\id_{\langle[\![M,N]\!]_A\rangle_e}:\langle[\![S_g(M),S_g(N)]\!]_A\rangle_e\rightarrow\langle[\![M,N]\!]_A\rangle_e$, 
\end{enumerate}
\smallskip

\noindent for all $(M,\rho^M),(N,\rho^N)\in\mathsf{GrMod}(\cC)_A$.
\end{proposition}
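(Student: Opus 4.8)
The statement has two parts: first, that each $S_g$ as described is a well-defined $\cC$-enriched endofunctor of $\mathsf{GrMod}(\cC)_A$, and second, that the collection $(S_g)_{g\in G}$ satisfies conditions (a)--(e) of Definition~\ref{def:G-grading}. I would organize the proof around these two tasks. The first task splits further: (i) checking that $S_g(M):=(M_{g^{-1}d})_{d\in G}$ with $\rho^{S_g(M)}:=(\rho^M_{g^{-1}d})_{d\in G}$ is genuinely an object of $\mathsf{GrMod}(\cC)_A$, i.e. that the associativity and unitality constraints of Definition~\ref{def:gr-mod} are inherited after re-indexing; (ii) checking that $S_g$ sends a morphism of graded $A$-modules to one, again by re-indexing; and (iii) verifying that the proposed $(S_g)_{M,N}$ is a morphism in $\cC$ compatible with the $\cC$-enrichment of Corollary~\ref{cor:GrModCA-main}, i.e.\ that it respects the composition morphisms $\gamma^A$ and the identity morphisms $\kappa^A$ as in Definition~\ref{def:enriched-functor}.

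The re-indexing checks in (i) and (ii) are routine: for (i), the module associativity law $\rho_{gh,k}(\rho_{g,h}\otimes\id_{A_k})=\rho_{g,hk}(\id_{M_g}\otimes m_{h,k})$ becomes, after substituting $g\mapsto g^{-1}d$, exactly the same law for $S_g(M)$ since the relevant indices shift uniformly on the left; similarly unitality is preserved because $\rho_{d,e}$ for $S_g(M)$ is $\rho^M_{g^{-1}d,e}$. For (iii), the key point is that the Hom-object $\langle[\![M,N]\!]_A\rangle_e$ computed from Theorem~\ref{thm:closed2} and Corollary~\ref{cor:closed} is literally unchanged when $M,N$ are replaced by $S_g(M),S_g(N)$: one has $[\![S_g(M),S_g(N)]\!]_d=\bigoplus_{p}[\,(S_g M)_{d^{-1}p},(S_g N)_p\,]=\bigoplus_p[M_{g^{-1}d^{-1}p},N_{g^{-1}p}]$, and reindexing the sum by $q=g^{-1}p$ gives $\bigoplus_q[M_{d^{-1}q},N_q]=[\![M,N]\!]_d$; taking the neutral component (and noting the equalizer defining $[\![-,-]\!]_A$ is built from the same data) shows the Hom-objects agree on the nose, so $\id$ is a legitimate choice for $(S_g)_{M,N}$. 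One must then confirm that under this identification the composition and identity morphisms $\gamma^A, \kappa^A$ are literally the same on both sides — this follows because $\gamma^\cC$ and $\kappa^\cC$ (Proposition~\ref{prop:self-enriched}) are defined purely from the internal-Hom adjunction data, all of which is preserved by the degree-shift reindexing, and because the equalizer cutting out $[\![-,-]\!]_A$ from $[\![-,-]\!]$ uses the module structure maps $\rho^M$, which are also just reindexed.

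Once functoriality and enrichment of each $S_g$ is established, conditions (a)--(e) are immediate from the definitions: (a) $S_e(M)=(M_{e^{-1}d})_{d}=(M_d)_d=M$; (b) $(S_e)_{M,N}=\id$ by construction; (c) $S_g(S_h(M))=((S_h M)_{g^{-1}d})_d=(M_{h^{-1}g^{-1}d})_d=(M_{(gh)^{-1}d})_d=S_{gh}(M)$, using $h^{-1}g^{-1}=(gh)^{-1}$; (d) both sides equal $\id$ on the (common) Hom-object so the equation holds trivially; (e) each $(S_g)_{M,N}=\id$ is an isomorphism. I would present (a)--(e) as a short checklist at the end.

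\textbf{Expected main obstacle.} The conceptually delicate point — and the one I would write out carefully — is step (iii): verifying that $(S_g)_{M,N}=\id$ is compatible with the $\cC$-enrichment, i.e.\ that $S_g$ is genuinely a $\cC$-\emph{functor} and not merely a functor of underlying categories. This requires unwinding how the Hom-object $\langle[\![M,N]\!]_A\rangle_e$ is assembled: it is the neutral component of an equalizer in $\mathsf{Gr}(\cC)$ of two morphisms $R,S$ built from $\rho^M,\rho^N$, and one must check that the degree-shift reindexing identifies not just the ambient internal-Homs $[\![M,N]\!]$ but also these equalizer diagrams, hence the sub-objects $[\![M,N]\!]_A$, and finally that $\gamma^A_{M,N,P}$ and $\kappa^A_M$ (which are induced on the equalizers by the universal property from $\gamma^\cC,\kappa^\cC$) transport to their $S_g$-shifted counterparts under the identity map. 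Everything is ``obviously'' preserved by a uniform reindexing of the $G$-grading, but making this precise requires tracking the identifications through Theorem~\ref{thm:closed2}, Corollary~\ref{cor:closed}, and Lemma~\ref{lem:tranport-enrichment}; I expect this bookkeeping, rather than any genuine difficulty, to be the bulk of the work, and it may be cleanest to phrase it as: the whole enriched structure of $\mathsf{GrMod}(\cC)_A$ is natural in the $G$-labelling, and $S_g$ is the functor induced by relabelling degrees via $d\mapsto g^{-1}d$.
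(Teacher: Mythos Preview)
Your approach is essentially the same as the paper's: reduce everything to showing that the enriched Hom-object $\langle[\![S_g(M),S_g(N)]\!]_A\rangle_e$ literally coincides with $\langle[\![M,N]\!]_A\rangle_e$, by tracking the reindexing through the internal-Hom formula and the equalizer diagram, after which the $G$-grading axioms are immediate since every $(S_g)_{M,N}$ is an identity.

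One computational slip to correct: your reindexing of $[\![S_g(M),S_g(N)]\!]_d$ is wrong for general $d$. Substituting $p=gq$ into $M_{g^{-1}d^{-1}p}$ gives $M_{g^{-1}d^{-1}gq}$, not $M_{d^{-1}q}$; so in fact
\[
[\![S_g(M),S_g(N)]\!]_d \;=\; [\![M,N]\!]_{g^{-1}dg},
\]
which is exactly what the paper records (Claim~\ref{claim:S_g-props}(c)). Your claimed equality $[\![S_g(M),S_g(N)]\!]_d=[\![M,N]\!]_d$ holds only when $d$ and $g$ commute. Fortunately this does not damage the argument: you only need the neutral component, and conjugation fixes $e$, so $\langle[\![S_g(M),S_g(N)]\!]_A\rangle_e=\langle[\![M,N]\!]_A\rangle_e$ still holds. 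Just be sure to state the general-$d$ identity correctly, since the paper uses parts (a) and (b) of this same claim elsewhere (e.g.\ in Example~\ref{ex:GammaCA}).

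On your ``expected main obstacle'': the paper takes the view that once the equalizer diagrams for $[\![S_g(M),S_g(N)]\!]_A$ and $[\![M,N]\!]_A$ are shown to be literally the same (same objects, same parallel pair $R,S$ after reindexing), the induced $\gamma^A$ and $\kappa^A$ are automatically the same morphisms, so the $\cC$-functor axioms for $(S_g,\id)$ hold trivially. So the bookkeeping you flag collapses once you verify the equalizer diagrams agree degreewise --- that is the actual content, and is where you should spend your ink.
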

\begin{proof}
    It is immediate from the definitions that the shift functors satisfy conditions (a) and (c) of Definition~\ref{def:G-grading}. If $(S_g)_{M,N}=\id_{\langle[\![M,N]\!]_A\rangle_e}$, then it is also clear that the shift functors $\big(S_g,\{(S_g)_{M,N}\}\big)$ form $\cC$-functors and that they satisfy the remaining conditions of Definition~\ref{def:G-grading}. It remains to show that $\langle[\![S_g(M),S_g(N)]\!]_A\rangle_e=\langle[\![M,N]\!]_A\rangle_e$, so that we may indeed define $(S_g)_{M,N}$ as the identity map. This last step follows from part (c) of the auxiliary result below, in the case where $d=e$.

%See Note #99 for complete proof
\begin{claim}\label{claim:S_g-props}
    For all $M,N\in \mathsf{GrMod}(\cC)_A$ and $g\in G$:
    \begin{enumerate}[\upshape(a)]
        \item $S_g([\![M,N]\!]_A) = [\![M,S_g(N)]\!]_A$;
        \skp
        \item $\left([\![S_g(M),N]\!]_A \right)_d = \left([\![M,N]\!]_A\right)_{dg}$, for all $d\in G$;
        \skp
        \item $\left([\![S_g(M),S_g(N)]\!]_A\right)_d = \left([\![M,N]\!]_A\right)_{g^{-1}dg}$, for all $d\in G$.
    \end{enumerate}
\end{claim} 
\begin{proof}[Proof of Claim~\ref{claim:S_g-props}.]
    We give a short proof of part (a). Parts (b) and (c) follow by similar arguments. Recall that over the course of the proof of Theorem~\ref{thm:closed}, we defined $[\![M,N]\!]_A$ as the equalizer of two maps $R_{M,N},S_{M,N}:[\![M,N]\!]\rightarrow[\![M\bar{\otimes}A,N]\!]$, given by:
    \begin{align*}
        R_{M,N} &:= \text{the right transpose of } \epsilon^M_N(\id_{[\![M,N]\!]}\; \bar{\otimes}\; \rho^M);\\[.2pc]
        S_{M,N} &:= \text{the right transpose of } \rho^N(\epsilon^M_N \; \bar{\otimes}\; \id_A).       
    \end{align*}
It follows that $S_g([\![M,N]\!]_A)$ is the equalizer of the maps $S_g(R_{M,N}):=((R_{M,N})_{g^{-1}d})_{d\in G}$ and $S_g(S_{M,N}):=((S_{M,N})_{g^{-1}d})_{d\in G}$.
Similarly $[\![M,S_g(N)]\!]_A$ is defined as the equalizer of the pair of maps $R_{M,S_g(N)},S_{M,S_g(N)}:[\![M,S_g(N)]\!]\rightarrow[\![M\bar{\otimes}A,S_g(N)]\!]$, where:
    \begin{align*}
        R_{M,S_g(N)} &:= \text{the right transpose of } \epsilon^M_{S_g(N)}(\id_{[\![M,S_g(N)]\!]}\; \bar{\otimes}\; \rho^M);\\[.2pc]
        S_{M,S_g(N)} &:= \text{the right transpose of } \rho^{S_g(N)}(\epsilon^M_{S_g(N)}\; \bar{\otimes}\; \id_A).        
    \end{align*}
A quick computation shows that: 
\[S_g([\![M,N]\!]) =[\![M,S_g(N)]\!]\quad \text{ and }\quad S_g([\![M\bar{\otimes}A,N]\!]) =[\![M\bar{\otimes}A,S_g(N)]\!], \]
so $S_g(R_{M,N}), S_g(S_{M,N})$ and $R_{M,S_g(N)},S_{M,S_g(N)}$ share the same domain and codomain. More computations show that for all degrees $d\in G$, we have:
    \begin{align*}
        (S_g(R_{M,N}))_d &= (R_{M,N})_{g^{-1}d} = (R_{M,S_g(N)})_d,\\[.1pc]
        (S_g(S_{M,N}))_d &= (S_{M,N})_{g^{-1}d} = (S_{M,S_g(N)})_d.        
    \end{align*}
Therefore, $S_g([\![M,N]\!]_A)$ and $[\![M,S_g(N)]\!]_A$ are equalizers of the same pair of maps.
\end{proof}
This completes the proof of the proposition.
\end{proof}

\subsection{Zhang-Morita equivalence} \label{sec:ZM-equiv}

 Next, we introduce the main notion of sameness for graded algebras in monoidal categories that is the focus of this article. Recall Corollary~\ref{cor:GrModCA-main}.

\begin{definition} \label{def:ZM-equiv}
Two algebras $A$ and $B$ in $\mathsf{Gr}(\cC)$
are said to be {\it Zhang-Morita equivalent} (or, {\it ZM-equivalent}) 
if $\mathsf{GrMod}(\cC)_A \simeq^{\cC} \mathsf{GrMod}(\cC)_B$.
\end{definition}

\begin{remark} \label{rem:Morita} The name above is due to the work of K. Morita \cite{Morita1958} on the equivalence of categories of modules over rings, and  the work of J. Zhang \cite{Zhang1996} whose work on categories of graded modules over $\Bbbk$-algebras, for $\Bbbk$ a field,  we are generalizing in this article. We discuss the differences between ZM-equivalence and similar notions in other parts of the literature below.
\begin{enumerate}[\upshape (a)]

\item Two algebras $A$ and  $B$ in a monoidal category $(\cC,\otimes,\one)$ are  {\it Morita equivalent} if $\mathsf{Mod}(\cC)_A$ and $\mathsf{Mod}(\cC)_B$ are equivalent as left $\cC$-module categories; see \cite[Sections~7.1,~7.8]{EGNO2015}. Here, $\mathsf{Mod}(\cC)_A$ is a left $\cC$-module category via action $X \act_A (M,\rho) := (X \otimes M, \id_X \otimes \rho)$. 
The extra structure of a $\cC$-module category is needed to get a rich Morita theory generalizing that for rings and $\Bbbk$-algebras, but this is not used in our work here. 

\skp

\item On the other hand, two rings  are {\it graded Morita equivalent} if their categories of modules, with morphisms in any degree, are equivalent. See, e.g., work of Boisen~\cite{Boisen1994}.  But here, 
 morphisms between graded objects are always degree-preserving [Definition~\ref{def:barC}]; this condition is also imposed in \cite{Zhang1996}.

 \skp

\item  We recover the setting of \cite{Zhang1996} precisely when $(\cC, \otimes, \one)$ is the monoidal category of \linebreak $\Bbbk$-vector spaces $(\mathsf{Vec}_\Bbbk, \otimes_\Bbbk, \Bbbk)$. Indeed, $\mathsf{GrMod}(\mathsf{Vec}_\Bbbk)_A$ is automatically $\mathsf{Vec}_\Bbbk$-enriched.
\end{enumerate}
\end{remark}

%%%%%%%%%%%%%%%%%%%%%%%%%%
%%%%%%%%%%%%%%%%%%%%%%%%%%
%%%%%%%%%%%%%%%%%%%%%%%%%%
\section{Twisted algebraic structures in monoidal categories}
\label{sec:twist}

In this part, we generalize numerous notions in \cite[Section~2]{Zhang1996} on twisting systems and twisted algebras  to the monoidal setting. We introduce twisting systems in monoidal categories in Section~\ref{sec:twistsys}, and then twist algebras and modules in monoidal categories in Sections~\ref{sec:twistedalg} and~\ref{sec:twistedmod},  respectively. To proceed, recall the notation and set-up of Section~\ref{sec:gralg-monoidal}, especially Convention~\ref{conv:algs}, and fix an algebra $A$ in $\mathsf{GrAlg}(\cC) \cong \mathsf{Alg}(\mathsf{Gr}(\cC))$. 

\subsection{Twisting systems} \label{sec:twistsys} 
 In \cite[Section~2]{Zhang1996}, Zhang defines a twisting system on a $G$-graded $\Bbbk$-algebra $A$ as a set of graded $\Bbbk$-linear automorphisms $\tau=\{\tau_g:A\to A\}_{g\in G}$ satisfying the twisting condition $\tau_g(y\tau_h(z)) = \tau_g(y)\tau_{gh}(z)$ for all $g,h,\ell\in G$ and $y\in A_h$, $z\in A_\ell$.
Analogously, we define the following notion.

\begin{definition} \label{def:twist}
A {\it twisting system} for $A$ (or a {\it twist} of $A$) is a tuple of isomorphisms,
\[
\tau:=(\tau_d(g): A_g \overset{\sim}{\longrightarrow} A_g)_{d,g \in G},
\]
satisfying the {\it twisting condition} given in the commutative diagram below. 
\begin{equation} \label{eq:twistcond}
\xymatrix@R=1.5pc@C=5pc{
A_{g_1} \otimes A_{g_2} 
\ar[r]^{\tau_d(\textnormal{$g_1$}) \; \otimes \; \id}
\ar[d]_{\id \; \otimes \; \tau_{\textnormal{$g_1$}}(\textnormal{$g_2$})}
& 
A_{g_1} \otimes A_{g_2}
\ar[r]^{\id \; \otimes \; \tau_{\textnormal{$d \hspace{.01in} g_1$}}(\textnormal{$g_2$})}
& 
A_{g_1} \otimes A_{g_2} 
\ar[d]^{m_{\textnormal{$g_1,g_2$}}}\\
A_{g_1} \otimes A_{g_2} 
\ar[r]_{m_{\textnormal{$g_1,g_2$}}}
&
A_{g_1 g_2} 
\ar[r]_{\tau_{d}(\textnormal{$g_1 g_2$})}
&
A_{g_1 g_2} 
}
\end{equation}
We will refer to the diagram above with respective entries $d, g_1, g_2$ by ``\eqref{eq:twistcond}$(d,g_1,g_2)$''.
\end{definition}

\begin{example} \label{ex:identwistsys}
We have an identity twisting system: $\tau^{\textnormal{id}}_d(g) := \id_{A_g}$ for all $d,g \in G$.
\end{example}

\begin{example} \label{ex:2cocycle-twist}
Consider the commutative monoid $\mathsf{Aut}_\cC(\one)$ (see \cite[Section~1.3]{TuraevVirelizier2017}), and  take a two-cocycle $\alpha$ of $G$ with values in $\mathsf{Aut}_\cC(\one)$. That is, $\alpha: G \times G \to \mathsf{Aut}_\cC(\one)$ is a map that satisfies 
\[
\alpha(gh, \ell)   \alpha(g,h) \; = \; \alpha(g,h\ell) \alpha(h, \ell) 
 \] 
 for all $g, h, \ell \in G$. Then the maps below, for each $d,g \in G$,
 \[
 \xymatrix@C=2pc{
\tau_d(g):=\alpha(d,g) \otimes  \id: A_g = \one \otimes A_g
\ar[r]
&\one \otimes A_g  = A_g
}
 \]
 satisfies the twisting condition \eqref{eq:twistcond}.
\end{example}

We will need the useful result below.

\begin{lemma} \label{lem:tau-inv}
 We have that $\tau_g(e)^{-1}u_e = \tau_e(e)^{-1}u_e$, for all $g \in G$.
\end{lemma}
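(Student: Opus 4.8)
The plan is to extract the identity $\tau_g(e)^{-1} u_e = \tau_e(e)^{-1} u_e$ directly from the twisting condition \eqref{eq:twistcond} by choosing the group elements cleverly and then invoking the unitality axioms of the graded algebra $A$. The key observation is that the unit $u_e \colon \one \to A_e$ interacts with multiplication via $m_{e,g}(u_e \otimes \id_{A_g}) = \id_{A_g}$ and $m_{g,e}(\id_{A_g} \otimes u_e) = \id_{A_g}$, so a well-chosen instance of \eqref{eq:twistcond} involving the neutral element $e$ should collapse the diagram into a relation purely among the maps $\tau_?(e)$ precomposed with $u_e$.

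First I would specialize the diagram \eqref{eq:twistcond}$(d, g_1, g_2)$ to the case $g_1 = g_2 = e$, giving a square with all four corners equal to $A_e \otimes A_e$ (mapping to $A_e$ along $m_{e,e}$). Then I would precompose the whole diagram with $u_e \otimes \id_{A_e} \colon A_e = \one \otimes A_e \to A_e \otimes A_e$ — or more precisely, I would feed in $\one \otimes \one$ along $u_e \otimes u_e$, or feed $u_e$ into one tensor slot and use unitality to kill the corresponding $m_{e,e}(u_e \otimes -)$ factor. Chasing the diagram \eqref{eq:twistcond}$(g, e, e)$ precomposed appropriately: along the top-right path one gets $\tau_g(e)$ applied after $m_{e,e}$, and using $m_{e,e}(\id \otimes u_e) = \id_{A_e}$ the $\tau_e(e)$-type factors on the $g_2 = e$ slot become identities, leaving (after composing with $u_e$ in the surviving slot) something like $\tau_g(e)\, u_e$ up to the factor $\tau_{?}(e)\,u_e$; along the other path the dependence on $d = g$ drops out entirely, leaving $\tau_e(e)\, u_e$. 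Equating the two gives a relation between $\tau_g(e)\,u_e$ and $\tau_e(e)\,u_e$, and since each $\tau_d(e)$ is an isomorphism one can rearrange to the stated form with inverses.

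The main obstacle I anticipate is bookkeeping: the twisting condition has three $\tau$-factors in the top row and one in the bottom, and getting the indices to line up so that precomposition with $u_e$ (in the right tensor slot) genuinely kills the middle factors via unitality requires choosing the right instance — I expect $(d, g_1, g_2) = (g, e, e)$ or $(d, g_1, g_2) = (g, g^{-1}, \, \cdot\,)$ type choices, and possibly combining two instances (one with $d = g$ and one with $d = e$) rather than a single diagram chase. A secondary subtlety is that $\tau_d(e)$ need not equal $\tau_e(e)$ as maps $A_e \to A_e$; the lemma only asserts they agree \emph{after} restriction along $u_e$ (and inversion), so I must be careful to carry the $u_e$ along throughout and not prove the stronger false statement. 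Once the correct instance is identified, the remaining steps are a short diagram chase plus canceling an isomorphism, which I would present in a couple of displayed equations rather than a full commutative diagram.
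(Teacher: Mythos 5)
Your proposal correctly identifies the key ingredients: the instances $(g,e,e)$ and $(e,e,e)$ of the twisting condition, precomposition with $u_e$ in the tensor slots, and the unitality axioms. You also rightly flag the subtlety that $\tau_g(e)$ need not equal $\tau_e(e)$ as maps on $A_e$, only after restriction along $u_e$. But there is a genuine gap in the combining step, and the strategy you sketch (``a short diagram chase plus canceling an isomorphism'') will not close it.

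Precomposing $(g,e,e)$ with $u_e \otimes u_e$ gives $m_{e,e}(\tau_g(e)u_e \otimes \tau_g(e)u_e) = \tau_g(e)\tau_e(e)u_e$: there is no way to cancel the $\tau_g(e)$ buried inside $m_{e,e}$ on the left, so this does not rearrange to the claimed identity, and the $d$-dependence does \emph{not} drop out of the bottom-left path as you suggest. What actually works is to feed the inverses $\tau_e(e)^{-1}u_e$ and $\tau_g(e)^{-1}u_e$ into the twisting diagram from the outset. Using $(e,e,e)$ one shows $m_{e,e}(\tau_e(e)u_e \otimes \tau_e(e)^{-1}u_e) = u_e$, and using $(g,e,e)$ one shows $m_{e,e}(\tau_g(e)^{-1}u_e \otimes \tau_e(e)u_e) = u_e$. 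These say that $\tau_e(e)^{-1}u_e$ is a right inverse and $\tau_g(e)^{-1}u_e$ a left inverse of $\tau_e(e)u_e$ with respect to $m_{e,e}$; the conclusion $\tau_g(e)^{-1}u_e = \tau_e(e)^{-1}u_e$ then follows from the standard monoid argument, inserting $\tau_e(e)u_e$ as a middle factor and invoking the \emph{associativity} of $m$. The deliberate introduction of inverses into the diagram, and the final associativity-driven ``left inverse equals right inverse'' step, are the pieces your sketch does not reach.
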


\begin{proof} Consider the following computations:
\begin{equation}
\begin{aligned}
m_{e,e} \left(\tau_e(e)u_e \otimes \tau_e(e)^{-1}u_e\right) 
&\; = \;    
m_{e,e} \hspace{0.02in} (\tau_e(e) \otimes \id_{A_e})\; (\id_{A_e} \otimes \tau_e(e)^{-1}) \; (u_e \otimes u_e)\\[-.1pc]
&\; \overset{\eqref{eq:twistcond}(e,e,e)}{=} \; 
\tau_e(e)\; m_{e,e} \; (\id_{A_e} \otimes \tau_e(e)^{-1})\; (u_e \otimes u_e)\\[.1pc]
&\; = \; 
\tau_e(e)\; m_{e,e}\; (u_e \otimes \id_{A_e})\; \tau_e(e)^{-1}\; u_e\\[.1pc]
&
\; = \;
\tau_e(e)\; \tau_e(e)^{-1}\; u_e
\; = \;
u_e;
\end{aligned}
\label{eq:tau-inv1}
\end{equation}

\vspace{-.05in} 

\begin{equation}
\begin{aligned}
m_{e,e} \left(\tau_g(e)^{-1}u_e \otimes \tau_e(e)u_e\right) 
&\; = \;    
m_{e,e} \hspace{0.02in} (\id_{A_e} \otimes \tau_e(e))\; (\tau_g(e)^{-1} \otimes \id_{A_e})\; (u_e \otimes u_e)\\[-.1pc]
&\; \overset{\eqref{eq:twistcond}(g,e,e)}{=} \; 
\tau_g(e)^{-1} \;  m_{e,e}\;(\id_{A_e} \otimes \tau_g(e))\;(u_e \otimes u_e)\\[.1pc]
&\; = \; 
\tau_g(e)^{-1}\; m_{e,e}\; (u_e \otimes \id)\; \tau_g(e)\; u_e\\[.1pc]
&
\; = \;
\tau_g(e)^{-1}\; \tau_g(e)\; u_e
\; = \;
u_e.
\end{aligned}
\label{eq:tau-inv2}
\end{equation}
Here, we also employ level-exchange and unitality axioms. Now the result holds because: 
\begin{align*}
\tau_g(e)^{-1}\;u_e
&\; = \; 
m_{e,e}\left(\tau_g(e)^{-1} \;u_e \otimes u_e\right)\\[-.1pc]
&\; \overset{\eqref{eq:tau-inv1}}{=} \; 
m_{e,e}\left[\tau_g(e)^{-1} \;u_e \otimes m_{e,e}\left(\tau_e(e) \;u_e \otimes \tau_e(e)^{-1}\;u_e\right)\right]\\[.1pc]
&\; = \; 
m_{e,e}\left[m_{e,e}\left(\tau_g(e)^{-1} \;u_e \otimes \tau_e(e) \;u_e\right) \otimes \tau_e(e)^{-1}\; u_e\right]\\[-.1pc]
&\; \overset{\eqref{eq:tau-inv2}}{=} \; 
m_{e,e}\left(u_e \otimes \tau_e(e)^{-1}\;u_e\right)\\[.1pc]
&\; = \; \tau_e(e)^{-1}\;u_e.
\end{align*}
Here, we use unitality axioms in the first and last equations, and the associativity axiom in the third equation.
\end{proof}

%%%%%%%%%%%%%%%%%%%%%%%%%%
%%%%%%%%%%%%%%%%%%%%%%%%%%
%%%%%%%%%%%%%%%%%%%%%%%%%%

\subsection{Twisted algebras in monoidal categories}  \label{sec:twistedalg}

Next, we use twisted systems to build new algebras in monoidal categories from old.

\begin{definitionproposition}[$A^\tau$] \label{defprop:Atau}
Given a twisting system $\tau$ for $A \in \mathsf{Alg}(\mathsf{Gr}(\cC))$, we obtain that 
\[
(\{A_g\}_g, \quad \{m^\tau_{g,h}:= m_{g,h}  (\id_{A_g} \otimes \tau_g(h)): A_g \otimes A_h \to A_{gh}\}_{g,h}, \quad u_e^\tau := \tau_e(e)^{-1}  u_e: \one \to A_e)
\]
defines an algebra in $\mathsf{Gr}(\cC)$. We  call this algebra the \textnormal{twist of $A$ by $\tau$ in $\mathsf{Gr}(\cC)$}, and denote it by $A^\tau$.
\end{definitionproposition}

\begin{proof}
Indeed, we have that $m^\tau$ is associative due to the computation below:
\[
\begin{array}{rl}
m^\tau_{gh,\ell}\;(m^\tau_{g,h} \otimes \id_{A_\ell})  &= m_{gh,\ell}\;(\id_{A_{gh}} \otimes \tau_{gh}(\ell))\;(m_{g,h} \otimes \id_{A_\ell}) \;(\id_{A_g} \otimes \tau_g(h) \otimes \id_{A_\ell}) \medskip \\
 &= m_{gh,\ell}\;(m_{g,h} \otimes \id_{A_\ell})\;(\id_{A_{g}} \otimes \id_{A_h} \otimes \tau_{gh}(\ell)) \;(\id_{A_g} \otimes \tau_g(h) \otimes \id_{A_\ell}) \medskip \\
  &= m_{g,h\ell}\;(\id_{A_g} \otimes m_{h,\ell})\;(\id_{A_{g}} \otimes \id_{A_h} \otimes \tau_{gh}(\ell)) \;(\id_{A_g} \otimes \tau_g(h) \otimes \id_{A_\ell}) \medskip \\
&= m_{g,h\ell}\;(\id_{A_{g}} \otimes \tau_{g}(h\ell))\;(\id_{A_g} \otimes m_{h,\ell}) \;(\id_{A_g} \otimes \id_{A_h} \otimes \tau_h(\ell)) \medskip\\
&= m^\tau_{g,h\ell}\; (\id_{A_g} \otimes m^\tau_{h,\ell}).
\end{array}
\]
The first and last equations hold by the definition of $m^\tau$; the second equation holds by level-exchange; the third equation follows from  associativity; and the fourth equation follows from the twisting condition \eqref{eq:twistcond}$(g,h,\ell)$.

We have that the unitality axioms hold due to the computations below:
\begin{align*}
m^\tau_{g,e} \;(\id_{A_g} \otimes u^\tau_e) 
&\; = \; m_{g,e}\;(\id_{A_g} \otimes \tau_g(e))\;(\id_{A_g} \otimes \tau_e(e)^{-1})\;(\id_{A_g} \otimes u_e)\\[-.1pc]
&\; \overset{\textnormal{Lem.~\ref{lem:tau-inv}}}{=} \; m_{g,e}\;(\id_{A_g} \otimes \tau_g(e))\;(\id_{A_g} \otimes \tau_g(e)^{-1})\;(\id_{A_g} \otimes u_e)\\[.1pc]
&\; = \; m_{g,e}\;(\id_{A_g} \otimes u_e) \; = \; \id_{A_g};\\[.6pc]
m^\tau_{e,g}\;(u^\tau_e \otimes \id_{A_g}) 
&\; = \; m_{e,g}\;(\id_{A_e} \otimes \tau_e(g))\;(\tau_e(e)^{-1} \otimes \id_{A_g})\;(u_e \otimes \id_{A_g})\\[-.1pc]
&\; \overset{\eqref{eq:twistcond}(e,e,g)}{=} \; \tau_e(g)^{-1}\;m_{e,g}\;(\id_{A_e} \otimes \tau_e(g))\;(u_e \otimes \id_{A_g}) \\[.1pc]
&\; = \; \tau_e(g)^{-1}\;m_{e,g}\;(u_e \otimes \id_{A_g})\; (\id_{A_e} \otimes \tau_e(g))\\[.1pc]
&\; = \; \tau_e(g)^{-1}\;\tau_e(g) \; = \; \id_{A_g}.
\end{align*}
We also employ level-exchange and unitality axioms in the calculations above.
\end{proof}

\begin{example}
Let us continue Example~\ref{ex:2cocycle-twist}. Take a two-cocycle $\alpha: G \times G \to \mathsf{Aut}_\cC(\one)$ of $G$. Then we can twist $A$ via $\alpha$ to form a new algebra $A^{\alpha}$ in $\cC$. Here, the twisted multiplication  and twisted unit are given by 
\[
m^\alpha_{g,h} := \alpha(g,h) m_{g,h} \quad \textnormal{and} \quad u^\alpha_{e} := \alpha(e,e)^{-1}  u_e.
\]
\end{example}

\begin{example} \label{ex:twistalgs}  We discuss below the identity twist, the inverse twist, and the composite twists of algebras in monoidal categories.
\begin{enumerate}[\upshape (a)]
\item For the twisting system $\tau^{\textnormal{id}}$ from Example~\ref{ex:identwistsys}, we get that $A^{\tau^{\textnormal{id}}} = A$ as algebras in $\mathsf{Gr}(\cC)$.

\smallskip

\item Given a twisting system $\tau$ on an algebra $A$, we have that $\tau^{-1}$ is a twisting system on $A^{\tau}$. Indeed, for all $d, g_1, g_2 \in G$ we obtain that:
\begin{align*}
&m^{\tau}_{g_1,g_2}\;(\id_{A_{g_1}} \otimes \tau^{-1}_{d \hspace{.02in} g_1}(g_2))\;(\tau^{-1}_d(g_1) \otimes \id_{A_{g_2}})\\[.1pc]
&\; = \; 
m_{g_1,g_2}\;(\id_{A_{g_1}} \otimes \tau_{g_1}(g_2))\;(\id_{A_{g_1}} \otimes \tau^{-1}_{d \hspace{.02in} g_1}(g_2))\;(\tau^{-1}_d(g_1) \otimes \id_{A_{g_2}})
\\[-.1pc]
&\; \overset{\eqref{eq:twistcond}}{=} \; 
\tau^{-1}_d(g_1 g_2)\; m_{g_1,g_2}\\[.1pc]
&\; = \; 
\tau^{-1}_d(g_1 g_2) \;m_{g_1,g_2}\;(\id_{A_{g_1}} \otimes \tau_{g_1}(g_2))\;(\id_{A_{g_1}} \otimes \tau^{-1}_{g_1}(g_2))\\[.1pc]
&\; = \; 
\tau^{-1}_d(g_1 g_2) \;m_{g_1,g_2}^{\tau}\;(\id_{A_{g_1}} \otimes \tau^{-1}_{g_1}(g_2)).
\end{align*}
In this case, $(A^{\tau})^{\tau^{-1}} = A$ as algebras in $\mathsf{Gr}(\cC)$.
\smallskip

\item Given a twisting system $\tau$ on an algebra $A$ and a twisting system $\sigma$ on $A^\tau$, we have that $\tau \sigma $ is a twisting system on $A$. Indeed, for all $d, g_1, g_2 \in G$ we obtain that:
 \begin{align*}
 \hspace{.3in}
&m_{g_1,g_2} \; [\id_{A_{g_1}} \otimes \tau_{d \hspace{.02in} g_1}(g_2) \; \sigma_{d \hspace{.02in} g_1}(g_2)] \; [\tau_d(g_1) \; \sigma_d(g_1) \otimes \id_{A_{g_2}}]\\[.1pc]
&\; = \; m_{g_1,g_2} \;  [\id_{A_{g_1}} \otimes \tau_{d \hspace{.02in} g_1}(g_2) \; \tau_{g_1}(g_2)^{-1} \; \tau_{g_1}(g_2)  \; \sigma_{d \hspace{.02in} g_1}(g_2)] \; [\tau_d(g_1) \; \sigma_d(g_1) \otimes \id_{A_{g_2}}]\\[.1pc]
&\; = \; m_{g_1,g_2} \;  [\tau_d(g_1)  \otimes \tau_{d \hspace{.02in} g_1}(g_2) \; \tau_{g_1}(g_2)^{-1} ] \; [\sigma_d(g_1) \otimes  \tau_{g_1}(g_2)  \; \sigma_{d \hspace{.02in} g_1}(g_2)]\\[-.1pc]
&\; \overset{\eqref{eq:twistcond}}{=} \;   \tau_d(g_1 g_2) \; m_{g_1,g_2} \; [\sigma_d(g_1) \otimes  \tau_{g_1}(g_2)  \; \sigma_{d \hspace{.02in} g_1}(g_2)]\\[.1pc]
&\; = \;   \tau_d(g_1 g_2) \; m_{g_1,g_2}^{\tau} \; [\sigma_d(g_1) \otimes  \sigma_{d \hspace{.02in} g_1}(g_2)]\\[.1pc]
&\; \overset{\eqref{eq:twistcond} \textnormal{ for $\sigma$}}{=}  \;   \tau_d(g_1 g_2) \; \sigma_d(g_1 g_2) \; m_{g_1,g_2}^{\tau} \; [\id_{A_{g_1}} \otimes  \sigma_{g_1}(g_2)]\\[.1pc]
&\; = \; \tau_d(g_1 g_2) \;\sigma_d(g_1 g_2) \; m_{g_1,g_2} \; [\id_{A_{g_1}} \otimes \tau_{g_1}(g_2) \; \sigma_{g_1}(g_2)].
\end{align*}
In this case, $(A^{\tau})^{\sigma} = A^{\tau \sigma}$ as algebras in $\mathsf{Gr}(\cC)$.
\end{enumerate}
\end{example}

Now consider the following notion of sameness for graded algebras in $\cC$.

\begin{definition} \label{def:twistequiv}
 We say that algebras $A$ and $B$ in $\mathsf{Gr}(\cC)$ are {\it twist equivalent} if there exists a twisting system $\tau$ on $A$ such that $A^\tau \cong B$ as algebras in $\mathsf{Gr}(\cC)$.   
\end{definition}

In fact, given Example~\ref{ex:twistalgs} above,  the result below is immediate.

\begin{proposition}
Twist equivalence in $\mathsf{Alg}(\mathsf{Gr}(\cC))$ is an equivalence relation. \qed
\end{proposition}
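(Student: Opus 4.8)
The plan is to verify the three defining properties of an equivalence relation on $\mathsf{Alg}(\mathsf{Gr}(\cC))$ directly, using the worked examples that immediately precede the statement. Reflexivity, symmetry, and transitivity each correspond to one part of Example~\ref{ex:twistalgs}, so the proof amounts to assembling these observations.

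For \textbf{reflexivity}, I would invoke Example~\ref{ex:identwistsys}, which provides the identity twisting system $\tau^{\textnormal{id}}$ on any algebra $A$, together with Example~\ref{ex:twistalgs}(a), which gives $A^{\tau^{\textnormal{id}}} = A$; in particular $A^{\tau^{\textnormal{id}}} \cong A$ as algebras in $\mathsf{Gr}(\cC)$, so $A$ is twist equivalent to itself. For \textbf{symmetry}, suppose $A$ and $B$ are twist equivalent, so there is a twisting system $\tau$ on $A$ and an isomorphism $\phi\colon A^\tau \xrightarrow{\sim} B$ in $\mathsf{Alg}(\mathsf{Gr}(\cC))$. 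By Example~\ref{ex:twistalgs}(b), $\tau^{-1}$ is a twisting system on $A^\tau$ with $(A^\tau)^{\tau^{-1}} = A$. One then transports $\tau^{-1}$ along the isomorphism $\phi$ to obtain a twisting system on $B$: explicitly, set $\sigma_d(g) := \phi_g \, \tau^{-1}_d(g) \, \phi_g^{-1}$, and check that $\sigma$ satisfies the twisting condition \eqref{eq:twistcond} because $\tau^{-1}$ does and $\phi$ is a morphism of graded algebras. A short computation then shows $B^\sigma \cong (A^\tau)^{\tau^{-1}} = A$, witnessed by the isomorphism induced by $\phi$, so $B$ is twist equivalent to $A$.

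For \textbf{transitivity}, suppose $A$ is twist equivalent to $B$ and $B$ is twist equivalent to $C$. Then there is a twisting system $\tau$ on $A$ with $A^\tau \cong B$, and a twisting system $\sigma'$ on $B$ with $B^{\sigma'} \cong C$. Transporting $\sigma'$ along the isomorphism $A^\tau \cong B$ (as in the symmetry step) yields a twisting system $\sigma$ on $A^\tau$ with $(A^\tau)^\sigma \cong B^{\sigma'} \cong C$. By Example~\ref{ex:twistalgs}(c), $\tau\sigma$ is a twisting system on $A$ with $(A^\tau)^\sigma = A^{\tau\sigma}$, hence $A^{\tau\sigma} \cong C$, so $A$ is twist equivalent to $C$.

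The only genuine content beyond citing Example~\ref{ex:twistalgs} is the bookkeeping lemma that an isomorphism $\phi\colon A^\tau \xrightarrow{\sim} B$ in $\mathsf{Alg}(\mathsf{Gr}(\cC))$ lets one transport a twisting system on $A^\tau$ to one on $B$ with a correspondingly twisted algebra isomorphic to the original; this conjugation argument is routine but is the step I would write out carefully, since the preceding examples are all phrased for twisting systems on the \emph{same} underlying algebra rather than across an isomorphism. I expect this to be the main (mild) obstacle; everything else is immediate from the cited examples, which is presumably why the authors mark the proposition with \qed and omit the proof.
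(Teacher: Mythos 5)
Your proof is correct and takes the route the paper intends by its reference to Example~\ref{ex:twistalgs}: reflexivity from the identity twist, symmetry from the inverse twist, and transitivity from the composite twist. You also correctly single out the one step the paper leaves implicit --- transporting a twisting system across the isomorphism $A^\tau \cong B$ by conjugation, $\sigma_d(g) := \phi_g\,\tau^{-1}_d(g)\,\phi_g^{-1}$ --- which is indeed the only part that requires a genuine (if routine) verification.
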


We end this part by providing a characterization of twisted algebras, which generalizes \cite[Proposition~2.8]{Zhang1996}.

\begin{proposition} 
\label{prop:phi}
Take $A, B \in \mathsf{Alg}(\mathsf{Gr}(\cC))$. Then the following statements are equivalent.
\begin{enumerate}[\upshape(a)]
\item We have that $A^\tau \cong B$  as algebras in $\mathsf{Gr}(\cC)$, for some twisting system $\tau$ for $A$.
\skp
\item There exist isomorphisms in $\cC$,
\[
\{\phi_d(g): B_g \overset{\sim}{\longrightarrow} A_g\}_{d,g \in G},
\]
that satisfy the commutative diagrams below.
\begin{equation}
\label{eq:phi}
\xymatrix@R=1.5pc@C=8pc{
B_{g_1} \otimes B_{g_2} 
\ar[r]^{\phi_{d}(\textnormal{$g_1$}) \; \otimes \; \phi_{\textnormal{$d \hspace{0.01in} g_1$}}(\textnormal{$g_2$})}
\ar[d]_{m^B_{\textnormal{$g_1,g_2$}}}
&A_{g_1} \otimes A_{g_2} 
\ar[d]^{m^A_{\textnormal{$g_1,g_2$}}} \\
B_{g_1 g_2} \ar[r]_{\phi_{d}(\textnormal{$g_1 g_2$})}& A_{g_1 g_2} 
} \quad  \quad 
\xymatrix@R=1.5pc@C=4pc{
\one 
\ar[r]^{u_e^B}
\ar[rd]_{u_e^A}
& B_e
\ar[d]^{\phi_e(e)} \\
& A_e 
}
\end{equation}
We will refer to the diagram above with respective entries $d, g_1, g_2$ by ``\eqref{eq:phi}$(d,g_1,g_2)$''.
\end{enumerate}
\end{proposition}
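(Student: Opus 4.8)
The plan is to prove $(a)\Leftrightarrow(b)$ by writing down explicit formulas that convert a twisting system together with an algebra isomorphism into a family $\{\phi_d(g)\}$ and back, and then checking the relevant axioms by short diagram chases using only level-exchange and the defining relations on each side.

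For $(a)\Rightarrow(b)$: assume $\tau$ is a twisting system for $A$ and $\varphi\colon A^\tau \to B$ is an isomorphism in $\mathsf{Alg}(\mathsf{Gr}(\cC))$, with components $\varphi_g\colon A_g \to B_g$. I would set $\phi_d(g):=\tau_d(g)\,\varphi_g^{-1}\colon B_g \to A_g$, which is an isomorphism since both factors are. For the square in \eqref{eq:phi}$(d,g_1,g_2)$, apply $\varphi^{-1}$ to the multiplicativity of $\varphi$ to obtain $\varphi_{g_1g_2}^{-1}m^B_{g_1,g_2}=m^\tau_{g_1,g_2}(\varphi_{g_1}^{-1}\otimes\varphi_{g_2}^{-1})$, unfold $m^\tau_{g_1,g_2}=m^A_{g_1,g_2}(\id\otimes\tau_{g_1}(g_2))$, and use \eqref{eq:twistcond}$(d,g_1,g_2)$ to replace $\tau_d(g_1g_2)\,m^A_{g_1,g_2}(\id\otimes\tau_{g_1}(g_2))$ by $m^A_{g_1,g_2}(\id\otimes\tau_{dg_1}(g_2))(\tau_d(g_1)\otimes\id)$; one level-exchange then identifies $\phi_d(g_1g_2)m^B_{g_1,g_2}$ with $m^A_{g_1,g_2}(\phi_d(g_1)\otimes\phi_{dg_1}(g_2))$. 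For the unit triangle, $\varphi_e^{-1}u^B_e=u^\tau_e=\tau_e(e)^{-1}u_e$ gives $\phi_e(e)u^B_e=\tau_e(e)\tau_e(e)^{-1}u_e=u^A_e$.

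For $(b)\Rightarrow(a)$: given isomorphisms $\{\phi_d(g)\}$ satisfying \eqref{eq:phi}, set $\tau_d(g):=\phi_d(g)\,\phi_e(g)^{-1}\colon A_g \to A_g$ (so that $\tau_e(g)=\id_{A_g}$) and $\varphi_g:=\phi_e(g)^{-1}\colon A_g \to B_g$. The first task is to verify $\tau$ obeys \eqref{eq:twistcond}: rewriting \eqref{eq:phi}$(e,g_1,g_2)$ as $\phi_e(g_1g_2)^{-1}m^A_{g_1,g_2}=m^B_{g_1,g_2}(\phi_e(g_1)^{-1}\otimes\phi_{g_1}(g_2)^{-1})$ and then applying \eqref{eq:phi}$(d,g_1,g_2)$, the right-hand side $\tau_d(g_1g_2)\,m^A_{g_1,g_2}(\id\otimes\tau_{g_1}(g_2))$ of \eqref{eq:twistcond} simplifies, after cancelling $\phi_{g_1}(g_2)^{-1}\phi_{g_1}(g_2)$, to $m^A_{g_1,g_2}(\tau_d(g_1)\otimes\tau_{dg_1}(g_2))$, which is the left-hand side by level-exchange. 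Now $A^\tau$ is well defined by Definition-Proposition~\ref{defprop:Atau}, and it remains to see $\varphi\colon A^\tau \to B$ is an algebra isomorphism: multiplicativity reduces to $m^\tau_{g_1,g_2}=\phi_e(g_1g_2)m^B_{g_1,g_2}(\phi_e(g_1)^{-1}\otimes\phi_e(g_2)^{-1})$, which follows from \eqref{eq:phi}$(e,g_1,g_2)$ since $\phi_{g_1}(g_2)\phi_e(g_2)^{-1}=\tau_{g_1}(g_2)$; and unitality, using $\tau_e(e)=\id$ so that $u^\tau_e=u_e$, is precisely the unit triangle of \eqref{eq:phi}.

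I expect the main obstacle to be the index bookkeeping in the verification of \eqref{eq:twistcond} for $(b)\Rightarrow(a)$: one must invoke \eqref{eq:phi} simultaneously at $d$ and at $e$ for the pair $(g_1,g_2)$, recognize that the composite $\phi_{g_1}(g_2)^{-1}\phi_{g_1}(g_2)\phi_e(g_2)^{-1}$ collapses to $\phi_e(g_2)^{-1}$, and apply level-exchange at the right step. The remaining computations are routine, in the same spirit as the proof of Definition-Proposition~\ref{defprop:Atau}.
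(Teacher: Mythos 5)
Your proof is correct and takes essentially the same route as the paper: you define $\phi_d(g) := \tau_d(g)\,\varphi_g^{\pm 1}$ for the forward direction and $\tau_d(g) := \phi_d(g)\phi_e(g)^{-1}$, $\varphi_g := \phi_e(g)^{\pm 1}$ for the converse, then verify the axioms by unfolding definitions, invoking \eqref{eq:twistcond} or \eqref{eq:phi} at the appropriate indices, and applying level-exchange. The only (cosmetic) deviation is the orientation of $\varphi$ — the paper takes $\varphi\colon B\to A^\tau$ while you take $\varphi\colon A^\tau\to B$ — which simply replaces $\varphi_g$ by $\varphi_g^{-1}$ throughout; your additional observation that $\tau_e(g)=\id$ is a nice shortcut for the unit check but does not change the argument in substance.
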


\begin{proof} 
(a) $\Rightarrow$ (b).  Suppose that $\varphi: B \to A^\tau$ is an isomorphism  in $\mathsf{Alg}(\mathsf{Gr}(\cC))$. In degree $gh$, we get: 
\begin{equation} \label{eq:phi1}
\varphi_{gh}\; m_{g,h}^B \; =  \; (m_{g,h}^A)^\tau \; (\varphi_{g} \otimes \varphi_h)
\; =  \; 
m_{g,h}^A \; (\id_{A_g} \otimes \tau_g(h)) \; (\varphi_{g} \otimes \varphi_h).
\end{equation}
Now take the isomorphisms in $\cC$:
\[\{\phi_d(g):= \tau_d(g) \; \varphi_g\}_{d,g\in G}.\] 
Then these morphisms satisfy the first condition of \eqref{eq:phi} as follows:
\begin{align*}
 \phi_d(g_1g_2)\; m_{g_1,g_2}^B 
 & \; = \;
 \tau_d(g_1g_2) \; \varphi_{g_1g_2} \;   m_{g_1,g_2}^B \\[-.1pc]
  & \; \overset{\eqref{eq:phi1}}{=} \;
 \tau_d(g_1g_2) \; m_{g_1,g_2}^A \; (\id_{A_{g_1}} \otimes \tau_{g_1}(g_2)) \; (\varphi_{g_1} \otimes \varphi_{g_2})\\[-.1pc]
 & \; \overset{\eqref{eq:twistcond}}{=} \;
m_{g_1,g_2}^A \; (\id_{A_{g_1}} \otimes \tau_{d \hspace{0.02in} g_1}(g_2)) \; (\tau_d(g_1) \otimes \id_{A_{g_2}}) \; (\varphi_{g_1} \otimes \varphi_{g_2})\\[.1pc]
 & \; = \;
m_{g_1,g_2}^A \; (\phi_d(g_1) \otimes \phi_{d \hspace{0.02in} g_1}(g_2)).
\end{align*}
Moreover, the second condition of \eqref{eq:phi} holds as follows:
\[
\phi_e(e) \hspace{0.02in} u_e^B \; =\; \tau_e(e) \hspace{0.02in} \varphi_e \hspace{0.02in} u_e^B \;= \;\tau_e(e) \hspace{0.02in} u_e^{A^\tau} \;= \;\tau_e(e)\hspace{0.02in} \tau_e(e)^{-1} \hspace{0.02in} u_e^A\; =\; u_e^A.
\]

(b) $\Rightarrow$ (a). Suppose that we have isomorphisms $\{\phi_d(g): B_g \overset{\sim}{\to} A_g\}_{d,g\in G}$ satisfying \eqref{eq:phi}. Now take the isomorphisms in $\cC$:
\[
\{\tau_d(g):=\phi_d(g) \; \phi_e(g)^{-1}: A_g \overset{\sim}{\to} A_g\}_{d,g \in G}.
\]
Then these morphisms form a twisting system for $A$, that is, they satisfy \eqref{eq:twistcond}, as we see below:
\begin{align*}
&m^A_{g_1,g_2} \;(\id_{A_{g_1}} \otimes \tau_{d \hspace{0.02in} g_1}(g_2)) \; (\tau_d(g_1) \otimes \id_{A_{g_2}})\\[.1pc]
&\; = \; 
m^A_{g_1,g_2} \; (\id_{A_{g_1}} \otimes \phi_{d \hspace{0.02in} g_1}(g_2) \; \phi_e(g_2)^{-1}) \; (\phi_d(g_1)  \; \phi_e(g_1)^{-1} \otimes \id_{A_{g_2}})\\[-.1pc]
&\; \overset{\eqref{eq:phi}}{=} \; 
 \phi_d(g_1g_2) \;
m^B_{g_1,g_2}\; (\phi_e(g_1)^{-1} \otimes \phi_e(g_2)^{-1})\\[.1pc]
&\; = \; 
 \phi_d(g_1g_2) \;
m^B_{g_1,g_2}\; (\phi_e(g_1)^{-1} \otimes \phi_{g_1}(g_2)^{-1}\; \phi_{g_1}(g_2)\; \phi_e(g_2)^{-1})\\[.1pc]
 &\; \overset{\eqref{eq:phi}(e,g_1,g_2)}{=} \; 
 \phi_d(g_1g_2) \;
 \phi_e(g_1g_2)^{-1} \; m^A_{g_1,g_2} \; (\id_{A_{g_1}} \otimes  \phi_{g_1}(g_2) \;
 \phi_e(g_2)^{-1})\\[.1pc]
 &\; = \; 
\tau_d(g_1 g_2) \; m^A_{g_1,g_2} \; (\id_{A_{g_1}} \otimes \tau_{g_1}(g_2)). 
\end{align*}
Now with this twisting system  $\tau$ for $A$, we can construct an algebra isomorphism $\varphi: B \to A^\tau$. Namely, take $\{\varphi_g:= \phi_e(g): B_g \overset{\sim}{\to} A_g\}_{g \in G}$.
Then we are done by the following computations:
\begin{align*}
 \varphi_{gh}  \; m_{g,h}^B
& \; = \;
\phi_e(gh)  \; m_{g,h}^B \; \overset{\eqref{eq:phi}(e,g,h)}{=} \;
m_{g,h}^A \; (\phi_e(g) \otimes \phi_g(h))\\
& \; = \;
m_{g,h}^A \; (\phi_e(g) \otimes \phi_g(h)\; \phi_e(h)^{-1} \; \phi_e(h)) 
\\[.1pc]
&\; = \;
m_{g,h}^A \; (\phi_e(g) \otimes \tau_g(h) \; \phi_e(h))
\; = \;
(m_{g,h}^A)^\tau \; (\varphi_g \otimes \varphi_h)
\end{align*}
and
\[
\varphi_e\;  u_e^B \;= \;\phi_e(e) \; u_e^B \; \overset{\eqref{eq:phi}}{=} \; u_e^A \; = \; \phi_e(e) \; \phi_e(e)^{-1} \; u_e^A  \; = \; \tau_e(e)^{-1} \; u_e^A \;=\; u_e^{A^\tau}.
\]

\vspace{-.2in}

\end{proof}

\begin{remark}
The second condition of \eqref{eq:phi} on unitality was missing in \cite{Zhang1996}, but we incorporate this here and use it throughout.
\end{remark}

\subsection{Twisted modules in monoidal categories}
\label{sec:twistedmod} 

Finally, we twist modules in monoidal categories via twisting systems. 

\begin{definitionproposition}[$M^\tau$] \label{defprop:Mtau}
Take a twist $\tau$ of $A \in \mathsf{Alg}(\mathsf{Gr}(\cC))$ and $(M, \rho) \in \mathsf{GrMod}(\cC)_A$, we obtain that 
\[
(\{M_g\}_g, \quad \{\rho^\tau_{g,h}:= \rho_{g,h}  (\id_{M_g} \otimes \tau_g(h)): M_g \otimes A_h \to M_{gh}\}_{g,h})
\]
is a right $A^\tau$-module in $\mathsf{Gr}(\cC)$. This module is called the \textnormal{twist of $M$ by $\tau$ in $\mathsf{Gr}(\cC)$},  denoted by $M^\tau$.
\end{definitionproposition}

\begin{proof}
We have that $\rho^\tau$ is associative due to the computation below:
\[
\begin{array}{rl}
\rho^\tau_{gh,\ell} \; (\rho^\tau_{g,h} \otimes \id_{A_\ell})  &= \rho_{gh,\ell}\;(\id_{M_{gh}} \otimes \tau_{gh}(\ell))\;(\rho_{g,h} \otimes \id_{A_\ell}) \;(\id_{M_g} \otimes \tau_g(h) \otimes \id_{A_\ell}) \medskip \\
 &= \rho_{gh,\ell}\;(\rho_{g,h} \otimes \id_{A_\ell})\;(\id_{M_{g}} \otimes \id_{A_h} \otimes \tau_{gh}(\ell))\; (\id_{M_g} \otimes \tau_g(h) \otimes \id_{A_\ell}) \medskip \\
  &= \rho_{g,h\ell}\;(\id_{M_g} \otimes m_{h,\ell})\;(\id_{M_{g}} \otimes \id_{A_h} \otimes \tau_{gh}(\ell)) \;(\id_{M_g} \otimes \tau_g(h) \otimes \id_{A_\ell}) \medskip \\
&= \rho_{g,h\ell}\;(\id_{M_{g}} \otimes \tau_{g}(h\ell))\;(\id_{M_g} \otimes m_{h,\ell}) \;(\id_{M_g} \otimes \id_{A_h} \otimes \tau_h(\ell)) \medskip\\
&= \rho^\tau_{g,h\ell} \;(\id_{M_g} \otimes m^\tau_{h,\ell}).
\end{array}
\]
The first and last equations hold by the definition of $\rho^\tau$; the second equation holds by level-exchange; the third equation follows from the associativity of $\rho$; and the fourth equation follows from the twisting condition \eqref{eq:twistcond}$(g,h,\ell)$.

We have that the right unitality axioms hold due to the computation below:
\begin{align*}
\rho^\tau_{g,e} \; (\id_{M_g} \otimes u^\tau_e) 
&\; = \; \rho_{g,e} \;(\id_{M_g} \otimes \tau_g(e))\;(\id_{M_g} \otimes \tau_e(e)^{-1})\;(\id_{M_g} \otimes u_e)\\
&\; \overset{\textnormal{Lem.~\ref{lem:tau-inv}}}{=} \; \rho_{g,e}\;(\id_{M_g} \otimes \tau_g(e))\;(\id_{M_g} \otimes \tau_g(e)^{-1})\;(\id_{M_g} \otimes u_e) \;  \overset{ \rho \textnormal{ unital} }{=} \; \id_{M_g}.
\end{align*}

\vspace{-.25in}

\end{proof}

\begin{example} \label{ex:twistmods}
Similar to Example~\ref{ex:twistalgs}, we can twist graded modules in $\cC$ via an identity twist, an inverse twist, or a composite twist. Namely, we have the following examples.
\begin{enumerate}[\upshape (a)]
\item For the twist $\tau^{\textnormal{id}}$ of $A$, with $M \in \mathsf{GrMod}(\cC)_A$, we get that $M^{\tau^{\textnormal{id}}} = M$ in $\mathsf{GrMod}(\cC)_A$.

\smallskip

\item Given a twist $\tau$ of $A$, with $M \in \mathsf{GrMod}(\cC)_A$, we get that $(M^{\tau})^{\tau^{-1}} = M$ in $\mathsf{GrMod}(\cC)_A$.
\smallskip

\item Given a twist $\tau$ of $A$ and a twist $\sigma$ of $A^\tau$, we obtain that $(M^{\tau})^{\sigma} = M^{\tau \sigma}$ in $\mathsf{GrMod}(\cC)_{A^{\tau \sigma}}$.
\end{enumerate}
\end{example}

%%%%%%%%%%%%%%%%%%%%%%%%%%
%%%%%%%%%%%%%%%%%%%%%%%%%%
%%%%%%%%%%%%%%%%%%%%%%%%%%
\section{Zhang-Morita equivalence in monoidal categories} \label{sec:mainresults}

The aim of this section is to generalize the main results in \cite[Section~3]{Zhang1996} on twisted algebras and Zhang-Morita equivalence to the monoidal setting. 
We show how a twist equivalence of graded algebras in the monoidal setting yields Zhang-Morita (ZM-)equivalence in Section~\ref{sec:forward}. Then we provide a version of a converse statement to this result, in the enriched setting, in Section~\ref{sec:converse}.

\subsection{Twist equivalence yields ZM-equivalence} \label{sec:forward}
Here, we generalize \cite[Theorem~3.1]{Zhang1996} to the monoidal setting.

\begin{theorem} \label{thm:mainfwd}
Recall Definition-Propositions~\ref{defprop:Atau} and~\ref{defprop:Mtau}. Take algebras $A, B$ in $\mathsf{Gr}(\cC)$, with a twisting system $\tau$ for $A$. If $A^\tau \cong B$ as algebras in $\mathsf{Gr}(\cC)$, then we have an isomorphism of categories: $$\mathsf{GrMod}(\cC)_A\; \cong \;\mathsf{GrMod}(\cC)_{B}.$$  In particular,  $A$ and $B$ are ZM-equivalent  when they are twist equivalent as algebras in $\mathsf{Gr}(\cC)$.
\end{theorem}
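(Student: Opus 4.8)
The plan is to construct an explicit functor $\mathcal{T}^\tau\colon \mathsf{GrMod}(\cC)_A \to \mathsf{GrMod}(\cC)_{A^\tau}$ and show it is an isomorphism of categories; composing with the isomorphism $\mathsf{GrMod}(\cC)_{A^\tau}\cong\mathsf{GrMod}(\cC)_B$ induced by the algebra isomorphism $A^\tau\cong B$ (which is routine, since an isomorphism of algebras in $\mathsf{Gr}(\cC)$ restricts module structures along its inverse to give an isomorphism of module categories) then yields the claim. The functor $\mathcal{T}^\tau$ sends an object $(M,\{\rho_{g,h}\})$ to its twist $M^\tau = (\{M_g\}_g,\{\rho^\tau_{g,h}:=\rho_{g,h}(\id_{M_g}\otimes\tau_g(h))\})$ from Definition-Proposition~\ref{defprop:Mtau}, and sends a morphism $\varphi=(\varphi_g\colon M_g\to N_g)_{g\in G}$ to the \emph{same} tuple of underlying morphisms $(\varphi_g)_{g\in G}$.

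First I would verify $\mathcal{T}^\tau$ is well-defined on morphisms: if $\rho^N_{g,h}(\varphi_g\otimes\id_{A_h})=\varphi_{gh}\,\rho^M_{g,h}$ for all $g,h$, then precomposing both sides with $\id_{M_g}\otimes\tau_g(h)$ and using that $\tau_g(h)$ acts only on the $A_h$-tensorand (so it commutes past $\varphi_g\otimes\id$ by level-exchange in the form $(\varphi_g\otimes\id_{A_h})(\id_{M_g}\otimes\tau_g(h)) = (\id_{N_g}\otimes\tau_g(h))(\varphi_g\otimes\id_{A_h})$) gives $(\rho^N)^\tau_{g,h}(\varphi_g\otimes\id_{A_h})=\varphi_{gh}(\rho^M)^\tau_{g,h}$, i.e.\ $\varphi$ is a morphism $M^\tau\to N^\tau$ of $A^\tau$-modules. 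Functoriality (preservation of identities and composition) is immediate since $\mathcal{T}^\tau$ is the identity on underlying morphism-tuples and composition of graded morphisms is componentwise. Next I would check $\mathcal{T}^\tau$ is bijective on objects and on Hom-sets. For objects: the inverse is $\mathcal{T}^{\tau^{-1}}\colon \mathsf{GrMod}(\cC)_{A^\tau}\to\mathsf{GrMod}(\cC)_{(A^\tau)^{\tau^{-1}}}=\mathsf{GrMod}(\cC)_A$, using Example~\ref{ex:twistalgs}(b) that $(A^\tau)^{\tau^{-1}}=A$ and Example~\ref{ex:twistmods}(b) that $(M^\tau)^{\tau^{-1}}=M$; since $\tau^{-1}$ is genuinely a twisting system for $A^\tau$, $\mathcal{T}^{\tau^{-1}}$ is a functor by the same argument, and the composites $\mathcal{T}^{\tau^{-1}}\mathcal{T}^\tau$ and $\mathcal{T}^\tau\mathcal{T}^{\tau^{-1}}$ are identity functors because twisting objects and morphisms is undone and the morphism-tuples are literally unchanged. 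On Hom-sets $\mathcal{T}^\tau$ is a bijection because it is the identity on underlying tuples, with inverse given by the Hom-set map of $\mathcal{T}^{\tau^{-1}}$.

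The last sentence of the theorem is then immediate: an isomorphism of categories is in particular a $\cC$-equivalence (indeed, any functor that is an isomorphism of ordinary categories, composed with the fact that the $\cC$-enrichment on $\mathsf{GrMod}(\cC)_A$ from Corollary~\ref{cor:GrModCA-main} is transported along these isomorphisms of module categories, yields a $\cC$-functor that is $\cC$-fully faithful and $\cC$-essentially surjective in the sense of Definition~\ref{def:V-equiv}), so $\mathsf{GrMod}(\cC)_A\simeq^{\cC}\mathsf{GrMod}(\cC)_B$ and $A,B$ are ZM-equivalent. I expect the main obstacle to be the bookkeeping in the step that the isomorphisms of module categories respect the $\cC$-enrichments, i.e.\ that the constructed functor is not merely an isomorphism of underlying categories but genuinely a $\cV$-functor in the enriched sense; one must check that the twist functor commutes appropriately with the internal-Hom objects $\langle[\![M,N]\!]_A\rangle_e$, which requires tracing through how $\tau$ interacts with the equalizer defining $[\![M,N]\!]_A$ and with the transpose constructions of Example~\ref{ex:right-closed}. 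This is where the diagrammatic work concentrates, and it is the natural candidate to defer to the Appendix.
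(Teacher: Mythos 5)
Your proposal is correct and follows essentially the same route as the paper: define the twist functor $M\mapsto M^\tau$ acting identically on morphism-tuples, verify well-definedness via level-exchange, invert it using $\tau^{-1}$ and Examples~\ref{ex:twistalgs}(b), \ref{ex:twistmods}(b), and conclude by transporting the $\cC$-enrichment along the isomorphism. Your anticipated ``main obstacle'' --- checking that the twist functor is compatible with the canonically defined internal-Hom objects $\langle[\![M,N]\!]_A\rangle_e$ --- does not actually arise in the paper's proof, because (as in your own parenthetical remark) the paper simply endows $\mathsf{GrMod}(\cC)_B$ with the enrichment \emph{transported} from $\mathsf{GrMod}(\cC)_A$ and declares the $\cC$-equivalence with respect to that induced structure, rather than tracing $\tau$ through the equalizers.
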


\begin{proof}
Given that $A^\tau \cong B$, for some twisting system $\tau$ for $A$,  we define a functor:
\[
F_1: \mathsf{GrMod}(\cC)_A \longrightarrow \mathsf{GrMod}(\cC)_{A^\tau},
\quad M \mapsto M^\tau, \quad (\varphi:  M \to N) \mapsto (\varphi:  M^\tau \to N^\tau).
\]
Indeed, $F_1$ is well-defined on objects due to Definition-Proposition~\ref{defprop:Mtau}. Also, $F_1$ is well-defined on morphisms, since for a morphism  $\varphi: M \to N$ in  $\mathsf{GrMod}(\cC)_A$, and for each $g,h \in G$, we get that:
\begin{align*}
\rho_{g,h}^{\tau}(\varphi_g \otimes \id_{A_h}) &\; = \;
\rho_{g,h}(\id_{M_g} \otimes \tau_g(h))(\varphi_g \otimes \id_{A_h})\\[.2pc]
 &\; = \;
\rho_{g,h}(\varphi_g \otimes \id_{A_h})(\id_{M_g} \otimes \tau_g(h))\\[.2pc]
&\; = \; 
\varphi_{gh} \;\rho_{g,h} \; (\id_{M_g} \otimes \tau_g(h))\\[.2pc]
&\; = \; 
\varphi_{gh}\; \rho_{g,h}^{\tau}.
\end{align*}
Here, the second equation holds by level-exchange, and the third equation holds as $\varphi \in \mathsf{GrMod}(\cC)_A$. Thus, $\varphi:  M^\tau \to N^\tau$ belongs to $\mathsf{GrMod}(\cC)_{A^\tau}$. 

For the other direction, define the functor below in similar manner to $F_1$ above, using the inverse twisting system $\tau^{-1}$ on $A^\tau$ from Example~\ref{ex:twistalgs}(b):
\[
F_{-1}: \mathsf{Gr}(\cC)_{A^\tau} \longrightarrow \mathsf{GrMod}(\cC)_A,
\quad M \mapsto M^{\tau^{-1}}, \quad (\varphi:  M \to N) \mapsto (\varphi:  M^{\tau^{-1}} \to N^{\tau^{-1}}).
\]
Indeed, this is well-defined as $(A^{\tau})^{\tau^{-1}} = A$ as algebras in $\mathsf{Gr}(\cC)$; see Example~\ref{ex:twistalgs}. 

Finally, $F_1$ and $F_{-1}$ are mutually inverse functors due to Example~\ref{ex:twistmods}. Namely, if $M$ is in $\mathsf{GrMod}(\cC)_A$, then $(M^{\tau})^{\tau^{-1}} = M$ as right $A$-modules in $\mathsf{Gr}(\cC)$. Likewise,  if $M \in \mathsf{GrMod}(\cC)_{A^\tau}$, then $(M^{\tau^{-1}})^{\tau} = M$  in $\mathsf{GrMod}(\cC)_{A^\tau}$.

For the last statement, note that the isomorphism of categories, $\mathsf{GrMod}(\cC)_A \cong \mathsf{GrMod}(\cC)_{B}$, induces a $\cC$-enriched structure on $\mathsf{GrMod}(\cC)_{B}$ from that on $\mathsf{GrMod}(\cC)_{A}$ [Corollary~\ref{cor:GrModCA-main}]. With this induced structure, we obtain that $\mathsf{GrMod}(\cC)_A \simeq^\cC \mathsf{GrMod}(\cC)_{B}$.
\end{proof}

%%%%%%%%%%%%%%%%%%%%%%%%%%
\subsection{How ZM-equivalence yields twist equivalence} 
\label{sec:converse}
Our final goal is to produce a type of converse result of Theorem~\ref{thm:mainfwd}. In particular, we will generalize \cite[Theorem~3.3]{Zhang1996} to the monoidal setting. We begin by introducing some algebras in the enriched setting that are analogous to the algebras $\Gamma(A)$ in \cite[Section~3]{Zhang1996}.

\begin{definitionproposition}[$\Gamma^{\cV}(X)$] \label{def:gamma} 
 Let $\cD$ be a $G$-graded $\cV$-category, and take $X \in \cD$. Then we can form an algebra $\Gamma^{\cV}(X)$ in $\mathsf{Gr}(\cV)$ as follows.
\begin{enumerate}[\upshape(a)]
\item $\Gamma^{\cV}(X) := \big(\cD^{\cV}(S_g(X),X)\big)_{g \in G}$ as an object in $\mathsf{Gr}(\cV)$.
\skp
\item The multiplication maps are given by the formula below, for $g,h \in G$:  {\small
\[
\qquad m_{g,h}^{\Gamma^{\cV}(X)}:=
\gamma^{\cV}_{S_{gh}(X),S_g(X),X} \left(\id \otimes^{\cV} (S_g)_{S_h(X),X}\right): \cD^{\cV}(S_g(X),X) \otimes^{\cV} \cD^{\cV}(S_h(X),X) \to \cD^{\cV}(S_{gh}(X),X).
\]
}

\item The unit map is given by: 
\[
u_e^{\Gamma^{\cV}(X)}:= \kappa^{\cV}_X: \one^{\cV} \to \cD^{\cV}(X,X). 
\]
\end{enumerate}
We refer to these graded algebras as \textnormal{gamma algebras}.
\end{definitionproposition}

\begin{proof}
It suffices to show that the multiplication map is associative, and unital with respect to the unit map. Now associativity holds by the following computation, for $g,h,\ell \in G$:

\vspace{-.1in}

\[
{\footnotesize
\begin{array}{l}
m^{\Gamma^{\cV}(X)}_{gh, \ell}\left(m^{\Gamma^{\cV}(X)}_{g, h} \otimes^{\cV} \id_{(\Gamma^{\cV}(X))_{\ell}}\right)
\; =
m^{\Gamma^{\cV}(X)}_{gh, \ell}\left(\gamma^{\cV}_{S_{gh}(X),S_g(X),X}(\id \otimes^{\cV} (S_g)_{S_h(X),X}) \otimes^{\cV}\id_{(\Gamma^{\cV}(X))_{\ell}}\right)\\[.6pc]
=
\gamma^{\cV}_{S_{gh\ell}(X),S_{gh}(X),X}
\left(\id \otimes^{\cV} (S_{gh})_{S_{\ell}(X),X}\right)
\left(\gamma^{\cV}_{S_{gh}(X),S_g(X),X}(\id \otimes^{\cV} (S_g)_{S_h(X),X}) \otimes^{\cV}\id_{(\Gamma^{\cV}(X))_{\ell}}\right)\\[.6pc]
=
\gamma^{\cV}_{S_{gh\ell}(X),S_{gh}(X),X}
\left(\gamma^{\cV}_{S_{gh}(X),S_{g}(X),X} \otimes^{\cV} \id \right)
\left(\id \otimes^{\cV} (S_g)_{S_h(X),X}  \otimes^{\cV} (S_g)_{S_{h\ell}(X),S_h(X)} \right)
\left(\id \otimes^{\cV} \id \otimes^{\cV}(S_h)_{S_\ell(X),X} \right)\\[.6pc]
=
\gamma^{\cV}_{S_{gh\ell}(X),S_{g}(X),X}
\left(\id \otimes^{\cV}\gamma^{\cV}_{S_{gh\ell}(X),S_{gh}(X),S_g(X)} \right)
\left(\id \otimes^{\cV} (S_g)_{S_h(X),X}  \otimes^{\cV} (S_g)_{S_{h\ell}(X),S_h(X)} \right)
\left(\id \otimes^{\cV} \id \otimes^{\cV}(S_h)_{S_\ell(X),X} \right)\\[.6pc]
=
\gamma^{\cV}_{S_{gh\ell}(X),S_{g}(X),X}
\left(\id \otimes^{\cV} (S_g)_{S_{h\ell}(X),X} \right)
\left(\id \otimes^{\cV}\gamma^{\cV}_{S_{h\ell}(X),S_{h}(X),X} \right)
\left(\id \otimes^{\cV} \id \otimes^{\cV}(S_h)_{S_\ell(X),X} \right)\\[.6pc]
= m^{\Gamma^{\cV}(X)}_{gh, \ell}
\left( \id_{(\Gamma^{\cV}(X))_{g}}
\otimes^{\cV}
m^{\Gamma^{\cV}(X)}_{h, \ell} \right).
\end{array}
}
\]

\vspace{.1in}

\noindent The first two equations and last equation hold by definition. The third equation holds by level-exchange. The fourth equation holds by the associativity of $\gamma^{\cV}$. The fifth equation holds since $S_g$ is a $\cC$-functor and is compatible with $\gamma^{\cV}$.

To verify the left unitality axiom, we have:
\[
{\small
\begin{array}{rl}
m^{\Gamma^{\cV}(X)}_{e, g}\left(u_e^{\Gamma^{\cV}(X)} \otimes^{\cV} \id_{\Gamma(X)_g} \right)
&= \; 
\gamma^{\cV}_{S_{g}(X),X,X} \left(\id_{\Gamma(X)_e} \otimes^{\cV} (S_e)_{S_g(X),X}\right)\left(\kappa^{\cV}_X \otimes^{\cV} \id_{\Gamma(X)_g} \right)\\[.6pc]
& = \; 
\gamma^{\cV}_{S_{g}(X),X,X}\left(\kappa^{\cV}_X \otimes^{\cV} \id_{\Gamma(X)_g} \right) \;=\; \id_{\Gamma(X)_g}.
\end{array}
}
\]
The first two equations hold by definition, and the last equation is the left unitality of a $\cV$-category.

To verify the right unitality axiom, we have:
\[
{\small
\begin{array}{rl}
m^{\Gamma^{\cV}(X)}_{g, e}\left(\id_{\Gamma(X)_g} \otimes^{\cV} u_e^{\Gamma^{\cV}(X)} \right)
&= \; 
\gamma^{\cV}_{S_{g}(X),S_{g}(X),X} \left(\id_{\Gamma(X)_g} \otimes^{\cV} (S_g)_{X,X}\right)\left(\id_{\Gamma(X)_g} \otimes^{\cV} \kappa^{\cV}_X \right)\\[.6pc]
&= \; 
\gamma^{\cV}_{S_{g}(X),S_{g}(X),X}
\left(\id_{\Gamma(X)_g} \otimes^{\cV} \kappa^{\cV}_{S_g(X)} \right).
\end{array}
}
\]
The first  equation holds by definition. The second equation holds since $S_g$ is a $\cV$-functor and is compatible with $\kappa^{\cV}$.
The last equation is the right unitality of a $\cV$-category.
\end{proof}

Consider the following special case.

\begin{example}[$\Gamma^{\cC}(A)$] \label{ex:GammaCA}

Consider $\mathsf{GrMod}(\cC)_A$ as a $G$-graded $\cC$-category as in Proposition~\ref{prop:GrModCA-gr}. Then $\Gamma^\cC(A)=[\![A,A]\!]_A$ as graded algebras in $\cC$, for the regular module $A \in\mathsf{GrMod}(\cC)_A$. Indeed:
\[\Gamma^{\cC}(A) := \big(\langle[\![S_g(A),A]\!]_A\rangle_e\big)_{g \in G}= \left(\left([\![A,A]\!]_A\right)_g\right)_{g\in G}=:[\![A,A]\!]_A.\] 
as an object in $\mathsf{Gr}(\cC)$, where the second equality follows from Lemma~\ref{claim:S_g-props}(b). 

The unit map of $\Gamma^{\cC}(A)$ is given by $(\kappa^A_A)_e:\one\rightarrow\Gamma^{\cC}(A)_e:=\langle[\![A,A]\!]_A\rangle_e$, which corresponds to the map $\kappa^A_A:\one\rightarrow[\![A,A]\!]_A$ under our isomorphism $\mathsf{GrAlg}(\cC)\cong\mathsf{Alg}(\mathsf{Gr}(\cC))$. Similarly, using Lemma~\ref{claim:S_g-props} and the isomorphism $\mathsf{GrAlg}(\cC)\cong\mathsf{Alg}(\mathsf{Gr}(\cC))$, the multiplication map of $\Gamma^{\cC}(A)$ is equal to:

\vspace{-.1in}
    \[
    \gamma^A_{A,A,A}:[\![A,A]\!]_A \; \bar{\otimes} \; [\![A,A]\!]_A\rightarrow[\![A,A]\!]_A
    \]
    
\vspace{.05in}

\noindent Hence the structure maps of $\Gamma^{\cC}(A)$ as a graded algebra coincide with those of $[\![A,A]\!]_A$.
\end{example}
In light of the example above, we get the following:

\begin{corollary} \label{cor:Gamma-A}
Given $A \in \mathsf{Alg}(\mathsf{Gr}(\cC))$, we obtain that  $\Gamma^{\cC}(A) \cong A$ as algebras in  $\mathsf{Gr}(\cC)$. 
\end{corollary}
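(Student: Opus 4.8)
The plan is to chain together the two results that precede this corollary. First I would invoke Example~\ref{ex:GammaCA}, which identifies $\Gamma^{\cC}(A)$ with the endomorphism object $[\![A,A]\!]_A$ of the regular graded module $A$ — not merely as an object of $\mathsf{Gr}(\cC)$, but as a \emph{graded algebra}, since that example checks that the unit and multiplication maps match. Then I would apply Theorem~\ref{thm:endomalg}, which says that for a right-closed monoidal category with equalizers, the endomorphism algebra $[B,B]_B$ of an algebra $B$ is isomorphic to $B$ itself as algebras. The subtlety is that Theorem~\ref{thm:endomalg} is stated for an algebra in a right-closed monoidal category with equalizers, so I would apply it with that monoidal category taken to be $(\mathsf{Gr}(\cC), \bar\otimes, \bar\one)$ and with $B := A \in \mathsf{Alg}(\mathsf{Gr}(\cC))$.

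So the key steps, in order, are: (1) recall from Hypothesis~\ref{hypotheses} and Theorem~\ref{thm:closed2} that $(\mathsf{Gr}(\cC), \bar\otimes, \bar\one)$ is right-closed, and observe (as in the proof of Corollary~\ref{cor:closed}) that it inherits equalizers from $\cC$; (2) apply Theorem~\ref{thm:endomalg} to the algebra $A$ in $\mathsf{Gr}(\cC)$ to get $A \cong [\![A,A]\!]_A$ as algebras in $\mathsf{Gr}(\cC)$, where $[\![A,A]\!]_A$ denotes the endomorphism algebra constructed in Theorem~\ref{thm:closed} for the monoidal category $\mathsf{Gr}(\cC)$; (3) invoke Example~\ref{ex:GammaCA} to identify $[\![A,A]\!]_A$ with $\Gamma^{\cC}(A)$ as graded algebras; (4) compose the two isomorphisms to conclude $\Gamma^{\cC}(A) \cong A$ as algebras in $\mathsf{Gr}(\cC)$.

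The main obstacle — really the only point requiring care — is making sure the notation $[\![A,A]\!]_A$ is being used consistently in the two inputs. In Theorem~\ref{thm:endomalg} the notation $[-,-]_A$ refers to the Hom-objects of $\mathsf{Mod}(\cC')_A$ for a generic right-closed monoidal category $\cC'$; here we are instantiating $\cC' = \mathsf{Gr}(\cC)$, so $[\![A,A]\!]_A$ is an object (in fact algebra) of $\mathsf{Gr}(\cC)$, and one must confirm this is literally the same construction that Example~\ref{ex:GammaCA} calls $[\![A,A]\!]_A$ — it is, since Corollary~\ref{cor:closed} is exactly the instantiation of Theorem~\ref{thm:closed} at $\mathsf{Gr}(\cC)$, and Example~\ref{ex:GammaCA} builds $\Gamma^{\cC}(A)$ out of the Hom-objects of $\mathsf{GrMod}(\cC)_A \cong \mathsf{Mod}(\mathsf{Gr}(\cC))_A$ with its $\mathsf{Gr}(\cC)$-enrichment. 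Once this bookkeeping is settled the proof is a one-line composition, so I would write it accordingly.

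\begin{proof}
By Hypothesis~\ref{hypotheses} and Theorem~\ref{thm:closed2}, the monoidal category $(\mathsf{Gr}(\cC), \bar\otimes, \bar\one)$ is right-closed, and it has equalizers since $\mathsf{Gr}(\cC)$ inherits all co/limits present in $\cC$ (cf.\ the proof of Corollary~\ref{cor:closed}). Applying Theorem~\ref{thm:endomalg} to the algebra $A$ in the monoidal category $\mathsf{Gr}(\cC)$ yields $A \cong [\![A,A]\!]_A$ as algebras in $\mathsf{Gr}(\cC)$, where $[\![A,A]\!]_A$ is the endomorphism algebra produced by Theorem~\ref{thm:closed} (equivalently, Corollary~\ref{cor:closed}) for $\mathsf{GrMod}(\cC)_A \cong \mathsf{Mod}(\mathsf{Gr}(\cC))_A$. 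By Example~\ref{ex:GammaCA}, the structure maps of $\Gamma^{\cC}(A)$ as a graded algebra coincide with those of $[\![A,A]\!]_A$, so $\Gamma^{\cC}(A) = [\![A,A]\!]_A$ as algebras in $\mathsf{Gr}(\cC)$. Composing, we obtain $\Gamma^{\cC}(A) \cong A$ as algebras in $\mathsf{Gr}(\cC)$.
\end{proof}
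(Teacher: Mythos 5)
Your proof is correct and is essentially the paper's proof: both identify $\Gamma^{\cC}(A)$ with $[\![A,A]\!]_A$ via Example~\ref{ex:GammaCA} and then apply Theorem~\ref{thm:endomalg} to the algebra $A$ in the monoidal category $\mathsf{Gr}(\cC)$. The only difference is that you spell out the hypothesis-checking (right-closure and equalizers of $\mathsf{Gr}(\cC)$) that the paper leaves implicit, which is a reasonable clarification rather than a different argument.
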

\begin{proof}
    By the example above, $\Gamma^\cC(A)=[\![A,A]\!]_A$. Now by Theorem~\ref{thm:endomalg}, $\Gamma^\cC(A)=[\![A,A]\!]_A\cong A$.
\end{proof}

Now we turn our attention towards obtaining a twist equivalence of gamma algebras.

\begin{theorem} \label{thm:Gamma-twist}
 Take $G$-graded $\cV$-categories $\cD$ and $\cE$, and fix objects $X \in \cD$ and $Y \in \cE$. Suppose that there exists a $\cV$-fully faithful functor $\Phi: \cD \to \cE$, such that $\Phi(S_g(X)) \cong S_g(Y)$ as objects in $\cE_0$, for each $g \in G$. Then,  we have that
\[
\left(\Gamma^{\cV}(X)\right)^T \; \cong \; 
\Gamma^{\cV}(Y)
\]
as algebras in $\mathsf{Gr}(\cV)$, for some twisting system $T$ on $\Gamma^{\cV}(X)$.
\end{theorem}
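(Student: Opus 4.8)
The plan is to build the twisting system $T$ on $\Gamma^{\cV}(X)$ directly from the fully faithful functor $\Phi$ and the chosen isomorphisms, and then invoke Proposition~\ref{prop:phi} to conclude. First I would fix, for each $g \in G$, an isomorphism $\theta_g: \Phi(S_g(X)) \overset{\sim}{\to} S_g(Y)$ in $\cE_0$, normalized so that $\theta_e = \id_Y$ (using $\Phi(S_e(X)) = \Phi(X) \cong Y$; if $\Phi(X)$ is only isomorphic to $Y$ one can compose everything with a fixed isomorphism $\Phi(X)\cong Y$ and absorb it). Applying the $G$-graded $\cV$-category data, for each pair $d,g \in G$ I would define a morphism in $\cV$
\[
\phi_d(g): \Gamma^{\cV}(Y)_g = \cE^{\cV}(S_g(Y),Y) \longrightarrow \cE^{\cV}(\Phi(S_g(X)), \Phi(X)) \overset{\Phi_{S_g(X),X}^{-1}}{\longrightarrow} \cD^{\cV}(S_g(X),X) = \Gamma^{\cV}(X)_g,
\]
where the first arrow is conjugation by $\theta_g$ and $\theta_e^{-1} = \id$, and the second is the inverse of the (isomorphism, by $\cV$-full faithfulness) $\Phi_{S_g(X),X}$. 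The twisting index $d$ will enter through the shift functors: more precisely, I expect the correct definition to involve $(S_d)_{S_g(X),X}$ and $(S_d)_{S_g(Y),Y}$, so that $\phi_d(g)$ records the effect of $\theta$ after shifting by $d$, i.e. conjugation by $\theta_{dg}\,(S_d)(\theta_g)^{-1}$ or similar. Each $\phi_d(g)$ is an isomorphism in $\cV$ since $\Phi$ is $\cV$-fully faithful and the $\theta$'s and shift morphisms $(S_d)_{-,-}$ are isomorphisms.

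Next I would verify that the family $\{\phi_d(g)\}_{d,g \in G}$ satisfies the two commutative diagrams \eqref{eq:phi} with $A := \Gamma^{\cV}(X)$, $B := \Gamma^{\cV}(Y)$. The unitality square reduces to the normalization $\theta_e = \id$ together with the compatibility $\kappa^{\cV}_{\Phi(X)} = \Phi_{X,X} \circ \kappa^{\cV}_X$ from Definition~\ref{def:enriched-functor}, since $u_e^{\Gamma^{\cV}(X)} = \kappa^{\cV}_X$ and likewise for $Y$. The multiplicativity square is the substantive computation: expanding $m^{\Gamma^{\cV}(X)}_{g_1,g_2}$ and $m^{\Gamma^{\cV}(Y)}_{g_1,g_2}$ via their definitions in Definition-Proposition~\ref{def:gamma} (a composite $\gamma^{\cV}$ precomposed with a shift morphism), one must chase the diagram through (i) $\cV$-functoriality of $\Phi$ with respect to $\gamma^{\cV}$, (ii) the shift-compatibility axioms (c), (d) of Definition~\ref{def:G-grading}, and (iii) naturality of conjugation by the $\theta$'s across composition. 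This is precisely the diagrammatic heart of the argument and where the index $d$ must be threaded consistently; it is the analogue of the corresponding computation in \cite[Section~3]{Zhang1996}, but now carried out with enriched composition morphisms $\gamma^{\cV}$ rather than honest composition of maps. Once \eqref{eq:phi} is checked, Proposition~\ref{prop:phi}(b)$\Rightarrow$(a) immediately produces a twisting system $T := (\,\phi_d(g)\,\phi_e(g)^{-1}\,)_{d,g}$ on $\Gamma^{\cV}(X)$ with $(\Gamma^{\cV}(X))^T \cong \Gamma^{\cV}(Y)$ as algebras in $\mathsf{Gr}(\cV)$, which is the desired conclusion.

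The main obstacle I anticipate is pinning down the correct $d$-dependence of $\phi_d(g)$ so that the twisting condition \eqref{eq:twistcond} (equivalently, the left square of \eqref{eq:phi}) holds on the nose: the shift functors $S_d$ act on Hom-objects, and one needs the chosen isomorphisms $\theta_g$ to interact correctly with $(S_d)_{S_g(X),X}$ — essentially one needs a coherent choice relating $\theta_{dg}$, $S_d(\theta_g)$, and the structure isomorphisms $S_d(S_g(X)) = S_{dg}(X)$. I expect this to work out because the $\theta_g$ are only required up to the stated isomorphism $\Phi(S_g(X)) \cong S_g(Y)$, leaving enough freedom, but making the bookkeeping precise (and handling the subtlety that $\Phi(S_g(X)) \cong S_g(Y)$ is only an isomorphism, not an equality) will require care. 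A secondary, purely technical point is that $\Gamma^{\cV}$ lands in $\mathsf{Gr}(\cV)$-algebras and we want a \emph{twist} in the sense of Definition~\ref{def:twist}/Definition-Proposition~\ref{defprop:Atau}, so one should double-check that Proposition~\ref{prop:phi} applies verbatim in $\mathsf{Gr}(\cV)$ for the ambient monoidal category $\cV$ (it does, since that proposition is stated for an arbitrary $\cC$ satisfying Hypothesis~\ref{hypotheses}, applied here with $\cC := \cV$).
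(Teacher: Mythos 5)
Your overall strategy matches the paper's: reduce to Proposition~\ref{prop:phi} by exhibiting a family of isomorphisms $\phi_d(g)\colon \Gamma^{\cV}(Y)_g \to \Gamma^{\cV}(X)_g$ satisfying \eqref{eq:phi}, built from the $\cV$-fully faithful functor $\Phi$ and the chosen isomorphisms $t_g\colon \Phi(S_g(X)) \overset{\sim}{\to} S_g(Y)$. That reduction is correct and is exactly how the paper proceeds, and your $d=e$ definition (conjugate by $t_g$ and $t_e^{-1}$, then apply $\Phi^{-1}_{S_g(X),X}$) agrees with the paper's $\phi_e(g)$ up to the optional normalization.

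The gap is in the step you yourself flag as ``the main obstacle'': you never pin down the $d$-dependence of $\phi_d(g)$, and the candidate you float does not typecheck. You suggest conjugating by something like $\theta_{dg}\,(S_d)(\theta_g)^{-1}$. But $S_d(\theta_g)$ is a morphism $S_d(\Phi(S_g(X))) \to S_{dg}(Y)$, and $S_d(\Phi(S_g(X)))$ is \emph{not} $\Phi(S_{dg}(X))$: there is no hypothesis that $\Phi$ commutes with the shift functors (if there were, one could take the identity twist and the theorem would be nearly vacuous). So this conjugation does not land inside the image of $\Phi$ and cannot be composed with $\Phi^{-1}_{\cdot,\cdot}$. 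The paper's formula sidesteps this by never shifting inside the image of $\Phi$; instead it shifts the whole Hom-object first, then conjugates using $t_{dg}$ and $t_d$ (not a shifted $t_g$), and only afterwards undoes the shift:
\[
\phi_d(g)\;=\;(S_{d^{-1}})_{S_{dg}(X),S_d(X)}\;\circ\;\Phi^{-1}_{S_{dg}(X),S_d(X)}\;\circ\;(t_d^{-1})_*^{\Phi(S_{dg}(X))}\;\circ\;(t_{dg})^*_{S_d(Y)}\;\circ\;(S_d)_{S_g(Y),Y}.
\]
This choice is what makes the multiplicativity square in \eqref{eq:phi} close up (via the $\cV$-functoriality of $\Phi$ and $S_d$, the unit/associativity axioms for $\gamma^{\cV}$, and the conjugation identities \eqref{eq:claim5-1}--\eqref{eq:claim5-3}), and that verification is the bulk of the actual proof (Claim~\ref{claim5}, Figures~\ref{fig:claim5}--\ref{fig:claim5-2}). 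In short: the reduction and the $d=e$ case are right, but the crux --- the precise $\phi_d(g)$ and the diagram chase showing it satisfies \eqref{eq:phi} --- is missing, and the guess you offer for $\phi_d(g)$ is not salvageable without first routing the shift $S_d$ around $\Phi$ as above.
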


\begin{proof}
 By Proposition~\ref{prop:phi}, it suffices to define isomorphisms in $\cV$,
\[
\{\phi_d(g): \cE^\cV(S_g(Y),Y) \overset{\sim}{\longrightarrow}  \cD^\cV(S_g(X),X)\}_{d,g \in G},
\]
such that \eqref{eq:phi} holds, that is, the equations below need to hold:
\begin{equation} \label{eq:phi-app}
m_{g_1,g_2}^{\Gamma^{\cV}_{\cD}(X)} \circ \left(\phi_d(g_1) \otimes^{\cV} \phi_{dg_1}(g_2)\right) \; = \;
\phi_d(g_1 g_2) \circ m_{g_1,g_2}^{\Gamma^{\cV}_{\cE}(Y)},
\end{equation}

\vspace{-.2in}

\begin{equation} \label{eq:phi-app-2}
\phi_e(e) \circ u_e^{\Gamma^{\cV}_{\cE}(Y)} \; = \;
u_e^{\Gamma^{\cV}_{\cD}(X)},
\end{equation}
for all $d, g_1, g_2 \in G$. To proceed, we need to define a few morphisms.
\skp

\begin{itemize}
\item Take $t_g: \Phi(S_g(X)) \overset{\sim}{\to} S_g(Y)$ to be the given isomorphism in $\cE_0$, for $g \in G$.
\skp
\item For $f \in \Hom_{\cE_0}(Z,Z') \cong \Hom_{\cV}(\one^{\cV}, \cE^{\cV}(Z,Z'))$ and $W \in \cE$, define:
\[
f_*^W:=\gamma^{\cV}_{W,Z,Z'}\left(f \otimes^{\cV} \id_{\cE^{\cV}(W,Z)}\right): \; \cE^{\cV}(W,Z) \to \cE^{\cV}(W,Z').
\]
\item For $g \in \Hom_{\cE_0}(Z',Z) \cong \Hom_{\cV}(\one^{\cV}, \cE^{\cV}(Z',Z))$ and $W \in \cE$, define:
\[
g^*_W:=\gamma^{\cV}_{Z',Z,W}\left(\id_{\cE^{\cV}(Z,W)} \otimes^{\cV} g\right): \; \cE^{\cV}(Z,W) \to \cE^{\cV}(Z',W).
\]
\end{itemize}

\begin{claim} \label{claim5}
 The morphisms $\{\phi_d(g):\cE^\cV(S_g(Y),Y) \overset{\sim}{\longrightarrow}  \cD^\cV(S_g(X),X)\}_{d,g \in G}$ defined by 
\[
\phi_d(g):=
\left((S_{d^{-1}})_{S_{dg}(X),S_d(X)}\right) \circ
\left(\Phi_{S_{dg}(X),S_d(X)}\right)^{-1} \circ
(t_d^{-1})_*^{\Phi(S_{dg}(X))} \circ
(t_{dg})^*_{S_d(Y)} \circ
\left((S_d)_{S_{g}(Y),Y}\right)
\]
satisfies the conditions~\eqref{eq:phi-app} and~\eqref{eq:phi-app-2}.
\end{claim}
We reserve the proof of the claim for Appendix~\ref{app:Sec5.2claims}. This concludes the proof of the theorem.
\end{proof}

Finally, this brings us to our enriched converse result of Theorem~\ref{thm:mainfwd}.

\begin{corollary} \label{cor:backward}
Take algebras $A$ and $B$ in  $\mathsf{Gr}(\cC)$, and recall the $G$-graded, $\cC$-enriched structure on the categories $\mathsf{GrMod}(\cC)_A$ and $\mathsf{GrMod}(\cC)_B$ from Corollary~\ref{cor:GrModCA-main} and Proposition~\ref{prop:GrModCA-gr}.  Suppose that $A$ and $B$ are ZM-equivalent, namely \[\mathsf{GrMod}(\cC)_A \; \simeq^{\cC} \;  \mathsf{GrMod}(\cC)_B,\]  via a $\cC$-functor $\Phi$. Moreover, assume that $\Phi(S_g(A)) \cong S_g(B)$ as objects in $(\mathsf{GrMod}(\cC)_B)_0$, for all $g \in G$. Then $A^\tau \cong B$ as algebras in $\mathsf{Gr}(\cC)$, for some twisting system $\tau$ on $A$. 
\end{corollary}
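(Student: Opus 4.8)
The plan is to deduce this from Theorem~\ref{thm:Gamma-twist} by routing everything through the gamma algebras $\Gamma^{\cC}(A)$ and $\Gamma^{\cC}(B)$, and then identifying those with $A$ and $B$ via Corollary~\ref{cor:Gamma-A}. First I would use Proposition~\ref{prop:GrModCA-gr} to view $\cD := \mathsf{GrMod}(\cC)_A$ and $\cE := \mathsf{GrMod}(\cC)_B$ as $G$-graded $\cC$-categories, with their shift functors $S_g$, and I would take $X := A$ and $Y := B$ to be the respective regular modules. The ZM-equivalence provides a $\cC$-functor $\Phi : \cD \to \cE$ which in particular is $\cC$-fully faithful, and by hypothesis $\Phi(S_g(A)) \cong S_g(B)$ in $\cE_0 = (\mathsf{GrMod}(\cC)_B)_0$ for each $g \in G$. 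These are exactly the hypotheses of Theorem~\ref{thm:Gamma-twist} with $\cV = \cC$, so that theorem yields a twisting system $T$ on $\Gamma^{\cC}(A)$ and an isomorphism
\[
\big(\Gamma^{\cC}(A)\big)^T \; \cong \; \Gamma^{\cC}(B)
\]
of algebras in $\mathsf{Gr}(\cC)$.

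Next I would transport this back to $A$ and $B$. By Corollary~\ref{cor:Gamma-A} (which rests on Example~\ref{ex:GammaCA} and Theorem~\ref{thm:endomalg}) there are algebra isomorphisms $\theta : A \overset{\sim}{\to} \Gamma^{\cC}(A)$ and $\theta' : B \overset{\sim}{\to} \Gamma^{\cC}(B)$ in $\mathsf{Gr}(\cC)$. The one genuinely new ingredient is the observation that a twisting system transports along an algebra isomorphism: I would set $\tau_d(g) := \theta_g^{-1} \circ T_d(g) \circ \theta_g : A_g \overset{\sim}{\to} A_g$ for all $d,g \in G$. A short diagram chase --- using that $\theta$ is multiplicative and unital, together with the twisting condition \eqref{eq:twistcond} for $T$ --- shows that $\tau$ is a twisting system for $A$, and, comparing with the formulas of Definition-Proposition~\ref{defprop:Atau}, that $\theta$ lifts to an algebra isomorphism $A^\tau \overset{\sim}{\to} \big(\Gamma^{\cC}(A)\big)^T$ (the relevant identities being $\theta_h\,\tau_g(h) = T_g(h)\,\theta_h$ and $\theta_e\, u_e^{A^\tau} = u_e^{(\Gamma^{\cC}(A))^T}$). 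Composing the three isomorphisms gives
\[
A^\tau \; \cong \; \big(\Gamma^{\cC}(A)\big)^T \; \cong \; \Gamma^{\cC}(B) \; \cong \; B
\]
as algebras in $\mathsf{Gr}(\cC)$, which is the claim. One could instead avoid the explicit transport entirely by feeding $\big(\Gamma^{\cC}(A)\big)^T \cong \Gamma^{\cC}(B)$ into Proposition~\ref{prop:phi} to obtain isomorphisms $\phi_d(g)$ satisfying~\eqref{eq:phi}, conjugating them by $\theta$ and $\theta'$, and then applying Proposition~\ref{prop:phi} in the reverse direction.

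Since the real content is packaged into Theorem~\ref{thm:Gamma-twist} and Corollary~\ref{cor:Gamma-A}, I do not expect a serious obstacle here; this corollary is essentially an assembly step. The only points requiring a little care are checking that the hypothesis stated here --- namely $\Phi(S_g(A)) \cong S_g(B)$ in the underlying category $(\mathsf{GrMod}(\cC)_B)_0$ --- is precisely what Theorem~\ref{thm:Gamma-twist} asks for (it is, since there $\cE_0$ denotes the underlying ordinary category), and carrying out the routine verification that twist equivalence is unaffected by replacing $A$ and $B$ by isomorphic algebras.
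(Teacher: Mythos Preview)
Your proposal is correct and follows essentially the same route as the paper: apply Theorem~\ref{thm:Gamma-twist} with $\cV=\cC$, $\cD=\mathsf{GrMod}(\cC)_A$, $\cE=\mathsf{GrMod}(\cC)_B$, $X=A$, $Y=B$ to obtain $(\Gamma^{\cC}(A))^T\cong\Gamma^{\cC}(B)$, then use Corollary~\ref{cor:Gamma-A} and transport $T$ along the algebra isomorphism to a twisting system $\tau$ on $A$. The paper's proof is terser---it simply writes ``$\tau:=\psi(T)$'' without spelling out the conjugation $\tau_d(g)=\theta_g^{-1}T_d(g)\theta_g$ or the verification that $\theta$ becomes an isomorphism $A^\tau\overset{\sim}{\to}(\Gamma^{\cC}(A))^T$---but your expanded account of that transport is exactly what is implicit there.
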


\begin{proof}
Note that the $\cC$-equivalence $\Phi$ here is $\cC$-fully faithful by definition. So, the hypotheses of this result fulfill the hypotheses of Theorem~\ref{thm:Gamma-twist} with $\cV := \cC$, $\cD := \mathsf{GrMod}(\cC)_A$, $\cE := \mathsf{GrMod}(\cC)_B$, $X := A$, and $Y:= B$. Thus, $\Gamma^{\cC}(A)^T \cong \Gamma^{\cC}(B)$  in $\mathsf{Alg}(\mathsf{Gr}(\cC))$, for some twisting system $T$ on $\Gamma^{\cC}(A)$. By  Corollary~\ref{cor:Gamma-A}, there exists an isomorphism $\psi: \Gamma^{\cC}(A) \overset{\sim}{\to} A$ in $\mathsf{Alg}(\mathsf{Gr}(\cC))$. Therefore, $A^\tau \cong B$ as algebras in  $\mathsf{Gr}(\cC)$, where $\tau:= \psi(T)$ is a twisting system on $A$.
\end{proof}

%%%%%%%%%%%%%%%%%%%%%%%%%%
%%%%%%%%%%%%%%%%%%%%%%%%%%
%%%%%%%%%%%%%%%%%%%%%%%%%%

\appendix

\section{Proofs of selected results}

Here, we provide several proofs of results above. In Section~A.1, we prove the details of the proof of Proposition~\ref{prop:alg-bar}. Next, in  Section~A.2 (resp., in Section~A.3), we prove Claims~\ref{claim1} and~\ref{claim2} (resp., Claims~\ref{claim3} and~\ref{claim4}) for Section~\ref{sec:graded-closed-enrich} results. Finally, in Section~A.4, we verify Claim~\ref{claim5} for a result in Section~\ref{sec:converse}.

\subsection{Proof of Proposition~\ref{prop:alg-bar}} 
\label{sec:prop:alg-bar}
Suppose that $(A, m, u)$ is an algebra in $\mathsf{Gr}(\cC)$.    
After taking degrees, the data of the multiplication map $m:A \; \bar{\otimes}\; A \rightarrow A$ amounts to a collection of morphisms:
    \[ \{ m_q: \textstyle \bigoplus_{\ell\in G} A_\ell\otimes A_{\ell^{-1}q} \rightarrow A_q \}_{q \in G}\]
in $\cC$. Moreover, consider the canonical inclusions: 
    \[ \{ \iota_{\ell,q}: A_\ell\otimes A_{\ell^{-1}q}\hookrightarrow \textstyle \bigoplus_{p\in G} A_p\otimes A_{p^{-1}q} \}_{\ell, q \in G}.\]
We now obtain a candidate for the collection of multiplication morphisms:
    \[\{ m_{g,h}:= m_{gh} \circ\iota_{g,gh} : A_g\otimes A_{h} \rightarrow A_{gh}\}_{g,h \in G} .\]
The associativity constraints for these morphisms are verified via the commutative diagram below.
\[
{\footnotesize
\xymatrix@C=-1.5pc@R=.7pc{
A_g \otimes A_{g^{-1}h} \otimes A_{h^{-1}k}
\ar[rrrr]^{m_{g,g^{{\text -}1}h} \; \otimes \;  \id}
\ar[rrd]^{\iota_{g,h} \; \otimes \; \id}
\ar[rddd]^{\id \; \otimes \; \iota_{h, g^{{\text -}1} k}}
\ar[dddddd]_{\id \; \otimes \; m_{g^{{\text -}1}h, h^{{\text -}1}k}}
&&&& A_h \otimes A_{h^{-1}k}
\ar[dddddd]^{m_{h,h^{{\text -}1}k}}
\ar[lddd]_{\iota_{h,k}}\\
&& \left(\bigoplus_p A_p \otimes A_{p^{-1}h}\right) \otimes A_{h^{-1}k} 
\ar[rru]^{m_h \; \otimes \; \id}
\ar[d]^(.5){\iota}&&\\
&& \bigoplus_q \left(\bigoplus_p A_p \otimes A_{p^{-1}q}\right) \otimes A_{q^{-1}k} 
\ar[rd]_{\bigoplus_q m_q \; \otimes \; \id\; \; \;}
\ar[dd]^(.6){\cong}&&\\
& A_g \otimes \left(\bigoplus_q A_{g^{-1}q} \otimes A_{q^{-1}k}\right) 
\ar[rd]^(.55){\iota} 
\ar[lddd]^{\id \; \otimes \; m_{g^{{\text -}1}k}}
&& \bigoplus_q A_q \otimes A_{q^{-1}k} 
\ar[rddd]_{m_k}&\\
&& \bigoplus_p A_p \otimes \left(\bigoplus_q A_{p^{-1}q} \otimes A_{q^{-1}k}\right)
\ar[d]^(.55){\bigoplus_p \id \; \otimes \; m_{p^{{\text -}1}k}}&&\\
&& \bigoplus_p A_p \otimes A_{p^{-1}k} 
\ar[rrd]^{m_k}&&\\
A_g \otimes A_{g^{-1}k} 
\ar[rru]^{\iota_{g,k}} 
\ar[rrrr]_{m_{g,g^{{\text -}1}k}} &&&& A_k
}
}
\]
Here, the outer triangles hold by definition; the top left region commutes by associativity of coproducts; 
%[\fll{see proof here: https://math.stackexchange.com/questions/1081690/associativity-of-coproducts }] 
the bottom right region commutes by the associativity of $m$; and the top right and bottom left regions commute by the universal property of the coproduct as shown in \eqref{eq:coproduct} below.
\begin{equation}
\label{eq:coproduct}
{\small
\begin{tikzcd}
	{X_g} & {\bigoplus_p X_p} \\
	{Y_g} & {\bigoplus_p Y_p}
	\arrow["\iota_g^X", hook, from=1-1, to=1-2]
	\arrow["{\varphi_g}"', from=1-1, to=2-1]
	\arrow["\iota_g^Y"', hook, from=2-1, to=2-2]
	\arrow["{\bigoplus_p \varphi_p}", from=1-2, to=2-2]
\end{tikzcd}
}
\end{equation}

 Since $\bar{\one}_g=\delta_{g,e}\one$, the unit morphism $u$ is determined by the map $u_e: \one\rightarrow A_e$. The left unitality axiom holds by the commutative diagram below, and the right unitality axiom holds likewise.
 \[
{\small
\xymatrix@C=9pc@R=1.5pc{
\one \otimes A_g 
\ar[r]^{u_e \; \otimes \; \id}
\ar[d]_{\cong}
& A_e \otimes A_g
\ar[d]^{\iota_{e,g}}
\ar@/^4pc/[dd]^{m_{e,g}} \\
\bigoplus_p \one_p \otimes A_{p^{-1}g} 
\ar[r]^{(u_e \; \oplus  \; \bigoplus_{p \neq e} \vec{0}) \;  \otimes \; \id}
\ar[rd]_{\cong}
& \bigoplus_p A_p \otimes A_{p^{-1}g}
\ar[d]^{m_g}\\
& A_g
}
}
 \]
The top region commutes by the universal property of coproduct as in \eqref{eq:coproduct}; the right region commutes by definition; and the bottom triangle commutes by the unit axiom for $u$. Thus, we constructed a $G$-graded algebra in $\cC$, e.g., an object of $\mathsf{GrAlg}(\cC)$.

Conversely, given a $G$-graded algebra $(\{A_g\}_{g \in G},\;\{m_{g,h}\}_{g,h\in G},\; u_e)$ in $\cC$, take $A$ to be the tuple of objects $(A_g)_{g \in G}$. Moreover, let $m : A\; \bar{\otimes}\; A \to A$ be written as $(m_g: \bigoplus_{h} A_h  \otimes A_{h^{-1}g} \to A_g)_{g \in G}$, where $m_g$ is the biproduct $\bigoplus_h m_{h,h^{-1}g}$ of multiplication morphisms. Lastly, let $u: \bar{\one} \to A$ be written as $(u_g: \one_g \to A_g)_{g \in G}$, where $u_e: \one \to A_e$ is the given unit morphism, and $u_g: 0 \to A_g$ is the zero morphism for $g \neq e$. It is straight-forward to check that the associativity and unitality constraints of the morphisms of the $G$-graded algebra then imply the associativity and unitality axioms for $m$ and $u$. Thus, we constructed an algebra in $\mathsf{Gr}(\cC)$.

Moreover, the bijective correspondence between objects of $\mathsf{Alg}(\mathsf{Gr}(\cC))$ and $\mathsf{GrAlg}(\cC)$ above also extends to the desired  bijective correspondence between morphisms. 
\qed

\subsection{Proofs of Claims~\ref{claim1} and~\ref{claim2}} \label{app:Sec3.3claims}

%%%%%%%%%%%%%%%%%%%%%%%%%%
%%%%%%%%%%%%%%%%%%%%%%%%%%
%%%%%%%%%%%%%%%%%%%%%%%%%%

\begin{proof}[Proof of Claim~\ref{claim1}] 
(a) By the universal property of $\textnormal{eq}: [M,P]_A \to [M,P]$, it suffices to show that $\gamma^\cC_{M,N,P}\circ (\textnormal{eq} \otimes \textnormal{eq})$ equalizes $R_{M,P}$ and $S_{M,P}$. By Lemma~\ref{lem:equalizers}, we will instead show that:
\[ 
\xymatrix@C=4pc{
[N,P]_A\otimes[M,N]_A\otimes M\otimes A
\ar[r]^(.52){\textnormal{eq}\otimes\textnormal{eq}\otimes \id \otimes \id}
&[N,P]\otimes[M,N]\otimes M\otimes A
\ar[r]^(.56){\gamma^\cC \otimes \id \otimes \id}
&[M,P]\otimes M\otimes A
}
\]
equalizes $R_{M,P}^\#$ and $S_{M,P}^\#$. This is showcased in Figure~\ref{fig:C_A-comp}. Here top and bottom-center-right hexagons commute by \eqref{eq:equal-sharp}; the bottom-center-left and bottom-right rectangular regions commute by $(\gamma^\cC)^\# = ((\epsilon(\id \otimes \epsilon))^\flat)^\# = \epsilon(\id \otimes \epsilon)$ [\eqref{eq:sharp-flat}, \eqref{eq:right-closed}, Proposition~\ref{prop:self-enriched}(iv)]; and the remaining rectangular regions commute by level-exchange.
\begin{figure}[h!]
 \[
 \hspace{-.1in}
{\footnotesize
\xymatrix@C=-2pc@R=1.3pc{
% ROW 1
 & & [N,P]_A\otimes[M,N]_A\otimes M\otimes A 
    \ar[dll]_{\id \; \otimes \; \textnormal{eq} \; \otimes \; \id \; \otimes \; \id}
    \ar[drrr]^{\id \; \otimes \; \textnormal{eq} \; \otimes \; \id \; \otimes \; \id}
 & & & \\
% ROW 2
[N,P]_A\otimes[M,N]\otimes M\otimes A 
    \ar[dd]_{\textnormal{eq} \; \otimes \; \id \; \otimes \; \id \; \otimes \; \id}
    \ar[dr]^(.6){\id \; \otimes \; \id \; \otimes \; \rho}
& & & & & [N,P]_A\otimes[M,N]\otimes M\otimes A
    \ar[dd]^{\textnormal{eq} \; \otimes \; \id \; \otimes \; \id \; \otimes \; \id}
    \ar[dl]_(.6){\id \; \otimes \; \epsilon \; \otimes \; \id}
\\
% ROW 3
 & [N,P]_A\otimes[M,N]\otimes M 
    \ar[dd]^{\textnormal{eq} \; \otimes \; \id \; \otimes \; \id }
    \ar[dr]^{\id \; \otimes \; \epsilon}
 & &  & [N,P]_A\otimes N\otimes A
    \ar[dd]^(.3){\textnormal{eq} \; \otimes \; \id \; \otimes \; \id }
    \ar[dll]_{\id \; \otimes \; \rho}
    \ar[dddl]_{\textnormal{eq} \; \otimes \; \id \; \otimes \; \id }
 & \\
% ROW 4
[N,P]\otimes[M,N]\otimes M\otimes A
    \ar[dd]_{\gamma^\cC \; \otimes \; \id \; \otimes \; \id }
    \ar[dr]_(.4){\id \; \otimes \; \id \; \otimes \; \rho }
& & [N,P]_A\otimes N
    \ar[ddd]_{\textnormal{eq} \; \otimes \; \id}
& & & [N,P]\otimes[M,N]\otimes M\otimes A
    \ar[dd]^{\gamma^\cC \; \otimes \; \id \; \otimes \; \id }
    \ar[dl]^(.45){\id \; \otimes \; \epsilon \; \otimes \; \id }
\\
% ROW 5
 & [N,P]\otimes[M,N]\otimes M
    \ar[dd]_{\gamma^\cC \; \otimes \; \id }
    \ar[ddr]_{\id \; \otimes \; \epsilon}
 & & & [N,P]\otimes N\otimes A
    \ar[dd]^(.4){\epsilon \; \otimes \; \id }
 & \\
% ROW 6
[M,P]\otimes M\otimes A
    \ar[dr]_{\id \; \otimes \; \rho }
& & & \; \; \; [N,P]\otimes N\otimes A
    \ar[dl]^{\id \; \otimes \; \rho }
& & [M,P]\otimes M\otimes A
    \ar[dl]^{\epsilon \; \otimes \; \id }
\\
% ROW 7
 & [M,P]\otimes M
    \ar[dr]_{\epsilon} 
 & [N,P]\otimes N
    \ar[d]_{\epsilon}
 & & P\otimes A
    \ar[dll]^{\rho}
& \\
% ROW 8
 & & P & & & \\
}
}
 \]
\caption{Proof of Claim~\ref{claim1}(a): Defining the composition morphisms $\gamma^A$.}
\label{fig:C_A-comp}

\end{figure}

\skp 

(b) Associativity of composition is showcased in Figure~\ref{fig:C_A-assoc}. 
\begin{figure}[h!]
 \[
{\footnotesize
\xymatrix@C=-0.8pc@R=1pc{
[P,Q]_A\otimes[N,P]_A\otimes[M,N]_A
\ar[rrrrr]^{\gamma^A \; \otimes \; \id}
\ar[ddddddddd]_{\id \; \otimes \; \gamma^A}
\ar[rd]^(.6){\textnormal{eq} \; \otimes \; \id \; \otimes \; \id}
&&&&&
[N,Q]_A\otimes[M,N]_A
\ar[ddddddddd]^{\gamma^A}
\ar[lddd]_{\textnormal{eq} \; \otimes \; \id}
\\
&
[P,Q]\otimes[N,P]_A\otimes[M,N]_A
\ar[rd]^(.6){\id \; \otimes \; \textnormal{eq} \; \otimes \; \id}
\ar[ddddddd]_{\id \; \otimes \; \gamma^A}
&&&&\\
&&
[P,Q]\otimes[N,P] \otimes[M,N]_A
\ar[rrd]_(.4){\gamma^\cC \; \otimes \; \id}
\ar[dd]_{\id \; \otimes \; \id \; \otimes \; \textnormal{eq}}
&&&\\
&&&&
[N,Q]\otimes[M,N]_A
\ar[dd]^{\id \; \otimes \; \textnormal{eq}}
&
\\
&&
[P,Q]\otimes[N,P]\otimes[M,N]
\ar[rrd]_(.5){\gamma^\cC \; \otimes \; \id}
\ar[dd]_{\id \; \otimes \; \gamma^\cC}
&&&\\
&&&&
[N,Q]\otimes[M,N] 
\ar[dd]^{\gamma^\cC}
&\\
&&
[P,Q]\otimes[M,P]
\ar[rrd]_{\gamma^\cC}
&&&\\
&&&&
[M,Q]
&
\\
&
[P,Q]\otimes[M,P]_A
\ar[ruu]_{\id \; \otimes \; \textnormal{eq}}
&&&&
\\
[P,Q]_A\otimes[M,P]_A
\ar[rrrrr]_{\gamma^A}
\ar[ru]^{\textnormal{eq} \; \otimes \; \id}
&&&&&
[M,Q]_A
\ar[luu]_{\textnormal{eq}}
}
}
 \]
 \caption{Proof of Claim~\ref{claim1}(b): Verifying the associativity of the morphisms $\gamma^A$.}
\label{fig:C_A-assoc}
\end{figure}
Here, the outer left region and top-right-interior square commutes by level-exchange; the other three outer regions and left inner region commute by the definition of $\gamma^A$ given in part (a); and the bottom-right-interior square commutes by associativity of $\gamma^\cC$. The equality of the two outer paths can be obtained by using the fact that the bottom right diagonal arrow $\textnormal{eq}:[M,Q]_A\rightarrow[M,Q]$ is a monomorphism, hence left-cancellable.  This concludes the proof of Claim~\ref{claim1}.
\end{proof}

\begin{proof}[Proof of Claim~\ref{claim2}]
(a) By the universal property of $\textnormal{eq}: [M,M]_A\rightarrow[M,M]$ it suffices to show that $\kappa^\cC_M:\one\rightarrow[M,M]$ equalizes $R_{M,M}$ and $S_{M,M}$. We will instead show that $\kappa^\cC_M\otimes \id_M \otimes \id_A$ equalizes $R^\#_{M,M}$ and $S^\#_{M,M}$ by Lemma~\ref{lem:equalizers}. This is done in Figure~\ref{fig:claim2-a}.
\begin{figure}[h!]
 \[
{\footnotesize
\begin{tikzcd}[row sep=15]
	&& {\one\otimes M\otimes A} \\
	{[M,M]\otimes M\otimes A} &&&& {[M,M]\otimes M\otimes A} \\
	&& {\one \otimes M} \\
	{[M,M]\otimes M} &&&& {M\otimes A} \\
	&& M
	\arrow["{\id \; \otimes\; \rho^M}"{pos=0.6}, from=1-3, to=3-3]
	\arrow["{\kappa^\cC_M \; \otimes \; \id \; \otimes \; \id}"'{pos=0.3}, from=1-3, to=2-1]
	\arrow["{\kappa^\cC_M \; \otimes \; \id}"'{pos=0.4}, from=3-3, to=4-1]
	\arrow["{\ell_M}", from=3-3, to=5-3]
	\arrow["{\id \; \otimes \; \rho^M}"', from=2-1, to=4-1]
	\arrow["{\epsilon^M_M}"', from=4-1, to=5-3]
	\arrow["{\kappa^\cC_M \; \otimes \; \id \; \otimes \; \id}", from=1-3, to=2-5]
	\arrow["{\epsilon^M_M \; \otimes \; \id_A}", from=2-5, to=4-5]
	\arrow["{\rho^M}", from=4-5, to=5-3]
	\arrow["{\ell_M \; \otimes \; \id_A}"{pos=0.6}, from=1-3, to=4-5]
	\arrow["{\ell_{M\otimes A}}"'{pos=0.6}, from=1-3, to=4-5]
\end{tikzcd}
}
 \]
\caption{Proof of Claim~\ref{claim1}(b): Defining the identity morphisms $\kappa^A$.}
\label{fig:claim2-a}
\end{figure}
Here, the left square commutes by level-exchange; the bottom left and top right triangles commute since $(\kappa_M^\cC)^\# = ((\ell_M)^\flat)^\# = \ell_M$ [\eqref{eq:sharp-flat}, \eqref{eq:right-closed}, Proposition~\ref{prop:self-enriched}(iii)]; and the right square commutes by naturality of the left unitor.

\skp

(b) The right identity axiom holds by Figure~\ref{fig:claim2-b-right}, and the left identity axiom holds likewise.
\begin{figure}[h!]
 \[
{\footnotesize
\xymatrix@C=4pc@R=1.2pc{
[M,N]_A\otimes\one
\ar[rrr]^{\id \; \otimes \; \kappa^A_M}
\ar[dddd]_{r_{[M,N]_A}}
\ar[rdd]_{\textnormal{eq} \; \otimes \; \id}
\ar[rrd]_{\id \; \otimes \; \kappa^\cC_M}
&&& [M,N]_A\otimes[M,M]_A
\ar[ld]^{\id \; \otimes \; \textnormal{eq}}
\ar@/^4pc/[llldddd]^{\gamma^A_{M,M,N}}
\\
&&[M,N]_A\otimes[M,M]
\ar[d]^{\textnormal{eq} \; \otimes \; \id}
&
\\
& [M,N]\otimes\one
\ar[d]_{r_{[M,N]}}
\ar[r]^(.45){\id \; \otimes \; \kappa^\cC_M}
&
[M,N] \otimes[M,M]
\ar[ld]^{\gamma^\cC_{M,M,N}}
&
\\
& [M,N] &&
\\
[M,N]_A
\ar[ur]_{\textnormal{eq}}
&&& 
}
}
 \]
\caption{Proof of Claim~\ref{claim1}(b): Verifying the right unitality of the morphisms $\kappa^A$.}
\label{fig:claim2-b-right}
\vspace{.5in}
\end{figure}
Here, the left square commutes by naturality of the right unitor; the top triangle and bottom right region commute by part (a) and Claim~\ref{claim1}(a); the center quadrant commutes by level-exchange; and the center triangle commutes by the right unit axiom of $\cC$ as a $\cC$-category. Now the two outer paths are equal  as  $\textnormal{eq}:[M,N]_A\rightarrow[M,N]$ is left-cancellable. Thus, Claim~\ref{claim2} holds.
\end{proof}

\pagebreak 

\subsection{Proof of Claims~\ref{claim3} and~\ref{claim4}
} \label{app:Sec3.4claims}

\begin{proof}[Proof of Claim~\ref{claim3}] By Lemma~\ref{lem:equalizers}, we can instead show that $m^\flat\otimes\id_{A\otimes A}$ equalizes the transposes $R^\sharp_{A,A}=\epsilon_A^A \; (\id_{[A,A]}\otimes m)$ and $S_{A,A}^\sharp=m   (\epsilon_A^A\otimes\id_A)$. This is shown below.
\[
{\small
\begin{tikzcd}[row sep=15]
	{A\otimes A\otimes A} &&& {[A,A\otimes A]\otimes A\otimes A} && {[A,A]\otimes A\otimes A} \\
	& {A\otimes A} && {[A,A\otimes A]\otimes A} && {[A,A]\otimes A} \\
	{[A,A\otimes A]\otimes A\otimes A} \\
	& {A\otimes A\otimes A} && {A\otimes A} \\
	{[A,A]\otimes A\otimes A} & {A\otimes A} &&&& A
	\arrow["{\eta_A^A\otimes\id\otimes\id}"', from=1-1, to=3-1]
	\arrow["\id"{pos=0.7}, from=1-1, to=4-2]
	\arrow["{\id\otimes m}"{pos=0.7}, from=1-1, to=2-2]
	\arrow["{\eta_A^A\otimes\id}", from=2-2, to=2-4]
	\arrow["{\epsilon_{A\otimes A}^A}", from=2-4, to=4-4]
	\arrow["\id"', from=2-2, to=4-4]
	\arrow["{\id\otimes m}"', from=4-2, to=4-4]
	\arrow["{\eta_A^A\otimes\id\otimes\id}", from=1-1, to=1-4]
	\arrow["{\id\otimes m}", from=1-4, to=2-4]
	\arrow["{[A,m]\otimes\id\otimes\id}", from=1-4, to=1-6]
	\arrow["{\id\otimes m}", from=1-6, to=2-6]
	\arrow["{[A,m]\otimes\id}", from=2-4, to=2-6]
	\arrow["{m\otimes \id}"', from=4-2, to=5-2]
	\arrow["m"', from=5-2, to=5-6]
	\arrow["m"', from=4-4, to=5-6]
	\arrow["{\epsilon_{A}^A}", from=2-6, to=5-6]
	\arrow["{[A,m]\otimes\id\otimes\id}"', from=3-1, to=5-1]
	\arrow["{\epsilon_A^A\otimes\id}"', from=5-1, to=5-2]
	\arrow["{\epsilon_{A\otimes A}^A\otimes\id}"', from=3-1, to=4-2]
\end{tikzcd}
}
\]
Here, the top two squares commute by level-exchange, the two triangles commute by adjunction triangle identities, the right-most and bottom left squares commute by naturality of $\epsilon$, and the bottom square commutes by associativity of multiplication.
\end{proof}

\begin{proof}[Proof of Claim~\ref{claim4}] 

First, we show that $\varphi$ is unital, i.e., that it satisfies $\varphi \hspace{0.02in} u = \kappa_A^A$. Notice that:
\[
\begin{array}{rll}
\varphi \; u= \kappa_A^A 
&\Leftrightarrow\hspace{0.1in} \text{eq} \; \varphi \; u = \text{eq} \; \kappa_A^A 
&\Leftrightarrow\hspace{0.1in} m^\flat \; u = \eta_\one^A \\[.4pc]
&\Leftrightarrow\hspace{0.1in} (m^\flat \; u)^\sharp = (\eta_\one^A)^\sharp 
&\Leftrightarrow\hspace{0.1in} \epsilon_A^A \;(m^\flat\otimes\id_A)\;(u\otimes\id_A) = \epsilon_A^A\;(\eta_\one^A\otimes\id_A).
\end{array}
\]
The first biconditional follows since $\text{eq}$ is a monomorphism, the second by applying definitions, the third by taking transposes, and the last by definition of $(-)^\sharp$. This last equation is proven in the diagram below.
\[{\small
\begin{tikzcd}[row sep=small]
	&&& {A\otimes A} \\
	\\
	A && {A\otimes A} && {[A,A\otimes A]\otimes A} && {[A,A]\otimes A} \\
	\\
	{[A,A]\otimes A} && A
	\arrow["{u\otimes\id}", from=3-1, to=1-4]
	\arrow["{m^\flat\otimes\id}", from=1-4, to=3-7]
	\arrow["{u\otimes\id}"{pos=0.7}, from=3-1, to=3-3]
	\arrow["\id", from=1-4, to=3-3]
	\arrow["{\eta_A^A\otimes\id}"', from=1-4, to=3-5]
	\arrow["{[A,m]\otimes\id}"', from=3-5, to=3-7]
	\arrow["{\epsilon_{A\otimes A}^A}", from=3-5, to=3-3]
	\arrow["{\eta_\one^A\otimes\id}"', from=3-1, to=5-1]
	\arrow["{\epsilon_A^A}"', from=5-1, to=5-3]
	\arrow["{\epsilon_A^A}", curve={height=-18pt}, from=3-7, to=5-3]
	\arrow["m", from=3-3, to=5-3]
	\arrow["{\id}"', from=3-1, to=5-3]
\end{tikzcd}
}\]
Here, the middle and bottom left triangles commute by adjunction triangle axioms, the right triangle commutes by definition of $m^\flat$, the right quadilateral commutes by naturality of $\epsilon$, and the interior triangle commutes by unitality of $A$. 

It remains to show $\varphi$ is multiplicative. Note that 
\[
{\small
\begin{array}{rl}
\gamma^A_{A,A,A}\;(\varphi\otimes\varphi)=\varphi\; m
&\Leftrightarrow \hspace{0.1in}
\textnormal{eq} \; \gamma^A_{A,A,A}\;(\varphi\otimes\varphi)= \textnormal{eq} \;\varphi \; m\\[.4pc]
&\Leftrightarrow \hspace{0.1in}
\gamma_{A,A,A}\;(\textnormal{eq} \otimes \textnormal{eq} )\; (\varphi\otimes\varphi)= \textnormal{eq} \;\varphi\; m\\[.4pc]
&\Leftrightarrow \hspace{0.1in}
\gamma_{A,A,A}\;(m^\flat \otimes m^\flat) = m^\flat \; m\\[.4pc]
&\Leftrightarrow \hspace{0.1in}
\left(\gamma_{A,A,A}\;([A,m]\; \eta^A_A \otimes [A,m]\; \eta^A_A)\right)^\# = \left([A,m]\; \eta^A_A \; m\right)^\#\\[.4pc]
&\Leftrightarrow \hspace{0.1in}
\epsilon_A^A \; \left(\gamma_{A,A,A} \; ([A,m]\; \eta^A_A \otimes [A,m]\; \eta^A_A)  \otimes \id_A \right) = \epsilon_A^A \;([A,m]\; \eta^A_A \; m  \otimes \id_A).
\end{array}
}
\]
%i.e.,  $\gamma^A_{A,A,A}\circ(\varphi\otimes\varphi)=\varphi\circ m$.
%Applying the monomorphism $\textnormal{eq}$ on the left, taking transposes, and unpacking definitions, this amounts to proving that:
%\[\epsilon_A^A(\gamma_{A,A,A}\otimes\id_A)([A,m]\otimes[A,m]\otimes\id_A)(\eta_A^A\otimes\eta_A^A\otimes\id_A)=\epsilon_A^A([A,m]\otimes\id_A)(\eta_A^A\otimes\id_A)(m\otimes\id_A).\]
The diagram below shows that the term $\epsilon_A^A\;(\gamma_{A,A,A}\otimes\id_A)$ %on the left hand side of the equation above may 
can be replaced with $\epsilon_A^A\;(\id_{[A,A]}\otimes\epsilon_A^A)$.
\[
\hspace{-.05in}{\footnotesize
\begin{tikzcd}[column sep=24, row sep=15]
	{[A,A]\otimes[A,A]\otimes A} && {[A,[A,A]\otimes[A,A]\otimes A]\otimes A} && {[A,[A,A]\otimes A]\otimes A} && {[A,A]\otimes A} \\
	\\
	&& {[A,A]\otimes[A,A]\otimes A} && {[A,A]\otimes A} && A
	\arrow["\eta_{[A,A]\otimes[A,A]}^A\otimes\id"', from=1-1, to=1-3]
	\arrow["{[A,\id\otimes\epsilon_A^A]\otimes\id}"', from=1-3, to=1-5]
	\arrow["{[A,\epsilon_A^A]\otimes\id}"', from=1-5, to=1-7]
	\arrow["\epsilon_A^A", from=1-7, to=3-7]
	\arrow["\id\otimes\epsilon_A^A", from=3-3, to=3-5]
	\arrow["\epsilon_A^A", from=3-5, to=3-7]
	\arrow["\epsilon_{[A,A]\otimes A}^A", from=1-5, to=3-5]
	\arrow["\epsilon_{[A,A] \otimes [A,A] \otimes A}^A", from=1-3, to=3-3]
	\arrow["\id"', from=1-1, to=3-3]
	\arrow["{\gamma_{A,A,A}}", curve={height=-30pt}, from=1-1, to=1-7]
\end{tikzcd}
}
\]
Here, the top region is the definition of $\gamma_{A,A,A}$, the left triangle commutes by an adjunction triangle identity, and the remaining squares commute by naturality of $\epsilon$.
%where we show both sides are equal to $(\id_A\otimes m)\circ m=(m\otimes\id_A)\circ m$.
So, it suffices to show that:
\[
\epsilon_A^A\;(\id_{[A,A]}\otimes\epsilon_A^A)\; ([A,m]\; \eta^A_A \otimes [A,m]\; \eta^A_A  \otimes \id_A)\; = \;\epsilon_A^A \;([A,m]\; \eta^A_A \; m  \otimes \id_A).
\]
%\[\epsilon_A^A\circ(\id_{[A,A]}\otimes\epsilon_A^A)([A,m]\otimes[A,m]\otimes\id_A)(\eta_A^A\otimes\eta_A^A\otimes\id_A)=\epsilon_A^A([A,m]\otimes\id_A)(\eta_A^A\otimes\id_A)(m\otimes\id_A).\]
We prove this by showing both sides of the equation above are equal to $m (\id_A\otimes m)$.
For the left hand side, the following diagram commutes.
\[
{\footnotesize
\begin{tikzcd}[column sep=24, row sep=15]
	&& {A\otimes A\otimes A} && {[A,A\otimes A]\otimes[A,A\otimes A]\otimes A} && {[A,A]\otimes[A,A]\otimes A} \\
	\\
	&& {[A,A\otimes A]\otimes A\otimes A} && {[A,A]\otimes A \otimes A} && {[A,A]\otimes A} \\
	{A\otimes A} && {[A,A\otimes A]\otimes A} \\
	&& {A\otimes A} &&&& A
	\arrow["\eta_A^A\otimes\eta_A^A\otimes\id", from=1-3, to=1-5]
	\arrow["{[A,m]\otimes[A,m]\otimes\id}", from=1-5, to=1-7]
	\arrow["\eta_A^A\otimes\id\otimes\id", from=1-3, to=3-3]
	\arrow["{[A,m]\otimes\id\otimes\id}", from=3-3, to=3-5]
	\arrow["{[A,m]\otimes\epsilon^A_{A\otimes A}}"', from=1-5, to=3-5]
	\arrow["{\id\otimes m}", from=3-3, to=4-3]
	\arrow["\eta_A^A\otimes\id", from=4-1, to=4-3]
	\arrow["{\id\otimes m}"', from=1-3, to=4-1]
	\arrow["\id"', from=4-1, to=5-3]
	\arrow["\epsilon_{A\otimes A}^A", from=4-3, to=5-3]
	\arrow["m", from=5-3, to=5-7]
	\arrow["{\id\otimes m}", from=3-5, to=3-7]
	\arrow["\id\otimes\epsilon_A^A", from=1-7, to=3-7]
	\arrow["{[A,m]\otimes\id}"', curve={height=12pt}, from=4-3, to=3-7]
	\arrow["\epsilon_A^A", from=3-7, to=5-7]
\end{tikzcd}
}
\]
Here, the leftmost and middle quadrangles commute by level exchange, the center top rectangle and bottom left triangle commute by an adjunction triangle identity, and the remaining regions commute by naturality of $\epsilon$.
For the right hand side, the following diagram commutes.
\[
{\footnotesize
\begin{tikzcd}[column sep=40, row sep=15]
	&& {A\otimes A\otimes A} && {A\otimes A\otimes A} \\
	\\
	{A\otimes A} && {[A,A\otimes A\otimes A]\otimes A} && {[A,A\otimes A]\otimes A} && {A\otimes A} \\
	\\
	&& {[A,A\otimes A]\otimes A} && {[A,A]\otimes A} && A
	\arrow["{\eta_{A\otimes A}^A\otimes \id}", from=1-3, to=3-3]
	\arrow["{[A,m\otimes \id]\otimes \id}", from=3-3, to=5-3]
	\arrow["{m\otimes \id}"', from=1-3, to=3-1]
	\arrow["{\eta_A^A\otimes \id}"', from=3-1, to=5-3]
	\arrow["\id", from=1-3, to=1-5]
	\arrow["\epsilon_{A\otimes A \otimes A}^A", from=3-3, to=1-5]
	\arrow["{\id\otimes m}", from=1-5, to=3-7]
	\arrow["{[A,\id\otimes m]\otimes \id}", from=3-3, to=3-5]
	\arrow["\epsilon_{A\otimes A}^A", from=3-5, to=3-7]
	\arrow["{[A,m]\otimes \id}", from=3-5, to=5-5]
	\arrow["{[A,m]\otimes \id}", from=5-3, to=5-5]
	\arrow["m", from=3-7, to=5-7]
	\arrow["\epsilon_A^A", from=5-5, to=5-7]
\end{tikzcd}
}
\]
Here, the center top triangle commutes by an adjunction triangle identity, the left region commutes by naturality of $\eta$, the top and bottom right regions commute by naturality of $\epsilon$, and the bottom center region commutes by associativity of the product $m$.

This concludes the proof of the claim.
\end{proof}

\subsection{Proof of Claim~\ref{claim5}
} \label{app:Sec5.2claims}

\begin{proof}[Proof of Claim~\ref{claim5}]
 Note that we will suppress notation in the arguments below. Namely, $\otimes:= \otimes^{\cV}$, $\cD:= \cD^{\cV}$, $\cE:= \cE^{\cV}$, and we will also suppress parenthesis and super/subscripts in many places. To proceed, let us consider three commutative diagrams attached to the morphisms $f_*^W$, $g_W^*$, for $f: Z \to Z'$ in $\cE_0$ and $g: Z' \to Z$ in $\cD_0$. First, we have that these morphisms commute with composition as illustrated in the commutative diagrams below.
\begin{equation} \label{eq:claim5-1}
{\footnotesize
\xymatrix@R=1pc@C=1.5pc{
\cE(W,Z) \otimes \cE(V,W)
\ar[rr]^{f_*^W \otimes \id}
\ar[ddd]_{\gamma}
\ar[rd]^{f \otimes \id \otimes \id}
& 
& \cE(W,Z') \otimes \cE(V,W)
\ar[ddd]^{\gamma}\\
& \cE(Z,Z') \otimes \cE(W,Z) \otimes \cE(V,W)
\ar[ru]^{\gamma \otimes \id}
\ar[d]^{\id \otimes \gamma}
&\\
& \cE(Z,Z') \otimes  \cE(V,Z)
\ar[rd]^{\gamma}\\
\cE(V,Z) 
\ar[rr]^{f_*^V}
\ar[ru]^{f \otimes \id}
& 
&
\cE(V,Z')
}
}
\end{equation}

\begin{equation} \label{eq:claim5-2}
{\footnotesize
\xymatrix@R=1pc@C=1.5pc{
\cD(W,V) \otimes \cD(Z,W)
\ar[rr]^{\id \otimes g^*_W}
\ar[ddd]_{\gamma}
\ar[rd]^{\id \otimes \id \otimes g}
& 
& \cD(W,V) \otimes \cD(Z',W)
\ar[ddd]^{\gamma}\\
& \cD(W,V) \otimes \cD(Z,W) \otimes \cD(Z',Z)
\ar[ru]^{\id \otimes \gamma}
\ar[d]^{\gamma \otimes \id}
&\\
& \cD(Z,V) \otimes  \cD(Z',Z)
\ar[rd]^{\gamma}\\
\cD(Z,V) 
\ar[rr]^{g^*_V}
\ar[ru]^{\id \otimes g}
& 
&
\cD(Z',V)
}
}
\end{equation}

\smallskip

The top and bottom regions of \eqref{eq:claim5-1} and \eqref{eq:claim5-2} commute by definition, the left regions commute by level exchange, and the right regions commute by the associativity of $\gamma$.

Now assume that the morphism $g: Z' \to Z$ is invertible in $\cD_0$. We then get the commutative diagrams below,  the first set holding by unitality.
\begin{equation} \label{eq:claim5-unit}
{\footnotesize
\xymatrix@R=2pc@C=3pc{
\one \ar[r]^(.4){g} \ar[d]_(.45){\kappa_Z} & \cD(Z',Z) \ar[d]^{\id \otimes g^{-1}} && \one \ar[r]^(.4){g} \ar[d]_(.45){\kappa_{Z'}} & \cD(Z,Z')  \ar[d]^{g^{-1} \otimes \id}\\
\cD(Z,Z)  & \cD(Z',Z)\otimes \cD(Z,Z') \ar[l]^(.58){\gamma} && \cD(Z',Z') & \cD(Z,Z')\otimes \cD(Z',Z) \ar[l]^(.58){\gamma}\\
}
}
\end{equation}
\begin{equation} \label{eq:claim5-3}
{\footnotesize
\xymatrix@R=1.5pc@C=.8pc{
\cD(Z,W) \otimes \cD(V,Z)
\ar[r]^{g^* \otimes \id}
\ar[rd]^{\id \otimes g \otimes \id}
\ar@/_2pc/[rddd]^(.35){\id \otimes \kappa_Z \otimes \id}
\ar@/_5.5pc/[ddddrr]^(.6){\gamma}
&
\cD(Z',W) \otimes \cD(V,Z)
\ar[r]^{ \id \otimes (g^{-1})_*}
\ar[rd]^{\id \otimes g^{-1} \otimes \id}
&
\cD(Z',W) \otimes \cD(V,Z')
\ar@/^8pc/[dddd]_{\gamma}\\
&
\cD(Z,W) \otimes \cD(Z',Z) \otimes \cD(V,Z)
\ar[u]^{\gamma \otimes \id}
\ar[d]^(.45){\id \otimes \id \otimes g^{-1} \otimes \id}
&
\cD(Z',W) \otimes \cD(Z,Z') \otimes \cD(V,Z)
\ar[u]^{\id \otimes \gamma}
\ar[dd]^{\gamma \otimes \id}\\
&
\cD(Z,W) \otimes \cD(Z',Z) \otimes \cD(Z,Z') \otimes \cD(V,Z) \hspace{-.5in}
\ar[ru]_(.6){\gamma \otimes \id \otimes \id}
\ar[d]^{\id \otimes \gamma \otimes \id}
&
\\
&
\cD(Z,W) \otimes \cD(Z,Z) \otimes \cD(V,Z)
\ar@/_1pc/[r]_(.5){\gamma \otimes \id} \hspace{-.5in}
& \cD(Z,W) \otimes  \cD(V,Z) 
\ar[d]^{\gamma}\\
&
&
\cD(V,W)
}
}
\end{equation}

\medskip

For \eqref{eq:claim5-3}, the top triangles commute by definition, the left-middle region commutes by \eqref{eq:claim5-unit}, and the rest of the regions commute by the associativity of $\gamma$ or by level exchange.

Now the proof of the claim follows from the commutative diagrams in Figures~\ref{fig:claim5} and~\ref{fig:claim5-2}. In  Figure~\ref{fig:claim5}, the top and bottom regions commute by definition, and the remaining unlabelled regions commute because $\cV$-functors are compatible with $\gamma$. In Figure~\ref{fig:claim5-2}, the left region commutes by level exchange, the second top region commutes by the unitality of $\gamma$, the third top region commutes by \eqref{eq:claim5-unit},  the right region commutes because $\cV$-functors are compatible with $\kappa$, and the bottom regions commute by definition.
\end{proof}

\begin{figure}
 \[
{\footnotesize
\xymatrix@R=2pc@C=-5.5pc{
\cE(S_{g_1}Y,Y) \otimes \cE(S_{g_2}Y,Y)
\ar[rrrrr]^{\phi_d(g_1) \otimes \phi_{d g_1}(g_2)}
\ar@/_3pc/@<-20pt>[ddddddddd]^(.55){m_{g_1, g_2}^{\Gamma_\cE(Y)}}
\ar[rd]^(.6){(S_d)_{.,.} \otimes (S_{d g_1})_{.,.}}
\ar@/_4.5pc/[rdddddd]^{\id \otimes (S_{g_1})_{.,.}}
&
&
&
&
&
 \cD(S_{g_1}X,X) \otimes \cD(S_{g_2}X,X)
 \ar@/^2pc/@<20pt>[ddddddddd]_(.55){m_{g_1, g_2}^{\Gamma_\cD(X)}}
 \ar@/^4.8pc/[ldddddd]_{\id \otimes (S_{g_1})_{.,.}}\\
&
\cE(S_{d g_1}Y,S_d Y) \otimes \cE(S_{d g_1 g_2}Y,S_{d g_1} Y) \hspace{-.4in}
\ar[rd]^(.6){\id \otimes (t_{d g_1 g_2})^*}
\ar@/_2.5pc/[ddddd]_(.2){(S_{d^{-1}})_{.,.}^{\otimes 2}}
\ar@/^2.6pc/[ddddddd]_{\gamma}
&
&
&
\hspace{-.9in} \cD(S_{d g_1}X,S_d X) \otimes \cD(S_{d g_1 g_2}X, S_{d g_1} X)
\ar[ru]^(.3){(S_{d^{-1}})_{.,.} \otimes (S_{g_1^{-1} d^{-1}})_{.,.}}
&\\
&
&
 \hspace{.3in} \cE(S_{d g_1}Y,S_d Y) \otimes \cE(\Phi(S_{d g_1 g_2}X), S_{d g_1} Y)  \hspace{-.5in}
 \ar@<20pt>[d]^{(t_{d g_1})^* \otimes \id}
 \ar@/_2.75pc/[ddddd]^(.6){\gamma}
&
&
&\\
&
&
\cE(\Phi(S_{d g_1}X),S_d Y) \otimes \cE(\Phi(S_{d g_1 g_2}X), S_{d g_1} Y) \hspace{-1.9in}
\ar@<20pt>[d]^{\id \otimes (t_{d g_1}^{-1})_*} 
&
&
&\\
&
&
 \cE(\Phi(S_{d g_1}X),S_d Y) \otimes \cE(\Phi(S_{d g_1 g_2}X), \Phi(S_{d g_1} X)) \hspace{-1.9in}
 \ar[rd]^(.6){(t_{d}^{-1})_* \otimes \id}
 \ar@<10pt>[ddd]_{\gamma}
&
&
&\\
&
&
&
 \cE(\Phi(S_{d g_1}X),\Phi(S_d X)) \otimes \cE(\Phi(S_{d g_1 g_2}X), \Phi(S_{d g_1} X)) \hspace{-.5in}
 \ar@<-75pt>[uuuu]_(.5){(\Phi^{-1})_{.,.} \otimes (\Phi^{-1})_{.,.}}
   \ar[dd]_{\gamma}
&
&\\
&
\hspace{-1in} \cE(S_{g_1}Y,Y) \otimes \cE(S_{g_1 g_2}Y,S_{g_1} Y) 
\ar@/_3pc/[lddd]^(.55){\gamma}
&
&
&
\cD(S_{g_1}X,X) \otimes \cD(S_{g_1 g_2}X, S_{g_1} X)
\ar@/^3pc/[rddd]_(.55){\gamma}
&\\
&
&
\cE(\Phi(S_{d g_1 g_2}X), S_d Y) 
\ar@/_1pc/[r]_{(t_d^{-1})_*}
&
\cE(\Phi(S_{d g_1 g_2}X),\Phi(S_d X))
\ar[rd]_(.45){(\Phi^{-1})_{.,.}}
&
&
\\
&
\cE(S_{d g_1 g_2}Y, S_d Y)
\ar[ru]_(.55){(t_{d g_1 g_2})^*}
&
&
&
\cD(S_{d g_1 g_2}X,S_d X)
\ar[rd]_(.45){(S_{d^{-1}})_{.,.}}
&
\\
\cE(S_{g_1 g_2}Y,Y) 
\ar[rrrrr]^{\phi_d(g_1 g_2)}
\ar[ru]_(.55){(S_d)_{.,.}}
\ar@{{ }{ }}@/_1.7pc/[uuuuuuuuurr]^(.73){\hbox{\small{\eqref{eq:claim5-2}}}}
\ar@{{ }{ }}[uuuuuuuuurrr]_(.43){\hbox{\small{\eqref{eq:claim5-3}}}}
\ar@{{ }{ }}[uuuuuuuurrrrr]_(.43){\hbox{\small{\eqref{eq:claim5-1}}}}
&
&
&
&
&
\cD(S_{g_1 g_2}X,X)
}
}
\]

\caption{Proof of Claim~\ref{claim5}: Verifying the twist condition \eqref{eq:phi-app}.}
\label{fig:claim5}
\end{figure}

\vspace{.2in}

\begin{figure}
 \[
{\footnotesize
\xymatrix@R=2pc@C=0.8pc{
&&& \one \ar@/_3.3pc/[lllddd]_{u_e^{\Gamma_\cE(Y)} \; = \;\kappa_Y \; } \ar@/^3.3pc/[rrrddd]^{\kappa_X \; = \; u_e^{\Gamma_\cD(X)}} \ar@/_1pc/[ld]_(.7){t_e} \ar@/^1pc/[rddd]^{\kappa_{\Phi(X)}}  &&& \\
&& \cE(\Phi(X),Y) \ar@/_1pc/[ld]^(.5){\kappa_Y \otimes \id} \ar[dd]^(.7){\id}   &&&& \\
& \cE(Y,Y) \otimes \cE(\Phi(X),Y) \hspace{-.5in}
\ar@/_1pc/[rd]^(.4){\gamma} && \cE(Y,\Phi(X)) \otimes \cE(\Phi(X),Y) \ar@/_1pc/[rd]^(.4){\gamma} &&&\\
\cE(Y,Y) \ar@/_1pc/[ru]^(.6){\id \otimes t_e} \ar[rr]^{(t_e)^*_Y} \ar@/_2pc/[rrrrrr]^(.5){\phi_e(e)}&& \cE(\Phi(X),Y)  \ar@/_1pc/[ru]^(.6){t_e^{-1} \otimes \id} \ar[rr]^{(t_e^{-1})_*^{\Phi(X)}}  && \cE(\Phi(X),\Phi(X)) \ar[rr]^(.55){\Phi_{X,X}^{-1}} && \cD(X,X)
}
}
\]

\caption{Proof of Claim~\ref{claim5}: Verifying the twist condition \eqref{eq:phi-app-2}.}
\label{fig:claim5-2}
\end{figure}

\section*{Acknowledgements}

The authors thank César Galindo for insightful discussions at the beginning of this project.
Walton was partially
supported by the US NSF grants
\#DMS-2100756 and \#DMS-2348833, and the Alexander von Humboldt Foundation. She was hosted by the University of Hamburg during many phases of this project, and would like to thank her hosts for providing excellent working conditions.  We also thank the referee for their careful comments, which improved  our manuscript.

%%%%%%%%%%%%%%%%%%%%%%%%%%
%%%%%%%%%%%%%%%%%%%%%%%%%%
%%%%%%%%%%%%%%%%%%%%%%%%%%

\bibliography{TwistMonoidalCats}
\bibliographystyle{alpha}

\end{document}